\DeclareMathOperator{\id}{id}
\DeclareMathOperator{\el}{el}
\DeclareMathOperator{\op}{op}
\DeclareMathOperator{\Lan}{Lan}
\DeclareMathOperator{\Mod}{\mathbf{Mod}}
\DeclareMathOperator{\fp}{fp}
\DeclareMathOperator{\Spec}{Spec}
\DeclareMathOperator{\Sh}{Sh}
\DeclareMathOperator{\Rex}{\mathbf{Rex}}
\DeclareMathOperator{\Ind}{Ind}
\DeclareMathOperator{\Aff}{\mathbf{Aff}}
\DeclareMathOperator{\Coh}{\mathbf{Coh}}
\DeclareMathOperator{\QCoh}{\mathbf{QCoh}}
\DeclareMathOperator{\fpqc}{\mathit{fpqc}}
\DeclareMathOperator{\SymPsMon}{\mathbf{SymPsMon}}
\DeclareMathOperator{\colim}{colim}
\DeclareMathOperator{\Lex}{\mathbf{Lex}}
\DeclareMathOperator{\Cat}{\mathbf{Cat}}
\DeclareMathOperator{\Comod}{\mathbf{Comod}}
\newcommand{\ca}[1]{\mathscr{#1}}
\newcommand{\Prs}[1]{\mathcal{P}\ca{#1}}
\newcommand{\ten}[1]{\mathop{{\otimes}_{#1}}}
\newcommand{\pb}[1]{\mathop{{\times}_{#1}}}
\theoremstyle{plain}
\newtheorem{thm}{Theorem}[section]
\newtheorem{prop}[thm]{Proposition}
\newtheorem{lemma}[thm]{Lemma}
\newtheorem{cor}[thm]{Corollary}
\theoremstyle{definition}
\newtheorem{example}[thm]{Example}
\newtheorem{rmk}[thm]{Remark}
\newtheorem{dfn}[thm]{Definition}
\newtheoremstyle{citing}{}{}{\itshape}{}{\bfseries}{.}{ }{\thmnote{#3}}
\theoremstyle{citing}
\newtheoremstyle{citingdfn}{}{}{}{}{\bfseries}{.}{ }{\thmnote{#3}}
\theoremstyle{citingdfn}
\numberwithin{equation}{section}
\keywords{ind-objects, Deligne tensor product, algebraic stacks, weakly Tannakian categories}
\subjclass[2000]{14A20, 18D10, 18E15}
\author{Daniel Sch\"appi}
\title{Ind-abelian categories and quasi-coherent sheaves}
\thanks{The author gratefully acknowledges support from a 2012 Endeavour Research Fellowship from the Australian Department of Education, Employment and Workplace Relations.}
\begin{document}

\begin{abstract}
 We study the question when a category of ind-objects is abelian. Our answer allows a further generalization of the notion of weakly Tannakian categories introduced by the author. As an application we show that, under suitable conditions, the category of coherent sheaves on the product of two schemes with the resolution property is given by the Deligne tensor product of the categories of coherent sheaves of the two factors. To do this we prove that the class of quasi-compact and semi-separated schemes with the resolution property is closed under fiber products.
\end{abstract}

\maketitle

\tableofcontents

\section{Introduction}

\subsection{Ind-abelian categories}
 It is a well-known fact that the category $\Ind(\ca{A})$ of ind-objects of an abelian category $\ca{A}$ is again abelian. On the other hand, for any commutative ring $R$, the category of ind-objects of the category $\Mod_R^{\fp}$ of finitely presentable $R$-modules is equivalent to the category $\Mod_R$ of all $R$-modules. If $R$ is not coherent, then $\Mod_R^{\fp}$ is not abelian, so $\ca{A}$ being abelian is not a necessary condition for $\Ind(\ca{A})$ to be abelian. Our goal is to characterize categories whose category of ind-objects is abelian.
 

\begin{dfn}\label{dfn:ind_abelian}
 A finitely cocomplete\footnote{A category is called \emph{finitely cocomplete} if it has all finite colimits. An $R$-linear category is finitely cocomplete if and only if it has finite direct sums and cokernels.} $R$-linear category $\ca{A}$ is called \emph{ind-abelian} if the following conditions are satisfied:
 \begin{enumerate}
  \item[(i)] Every epimorphism in $\ca{A}$ is \emph{regular}, that is, it is a cokernel of some other morphism;
  \item[(ii)]
   If the bottom row in the commutative diagram
   \[
   \xymatrix{B \ar@{..>>}[r]^{p^{\prime}} \ar@{..>}[d]_{f^{\prime}} & A \ar[d]^{f} \ar[rd]^{0} \\
             W \ar[r]^{p} & V \ar@{->>}[r]^{q} & U \ar[r] & 0}
   \]
   of solid arrows is right exact, then there exists an epimorphism $p^{\prime}$ and a morphism $f^{\prime}$ making the above diagram commutative.
 \end{enumerate}
\end{dfn}

\begin{example}\label{example:fg_modules}
 The category of finitely presentable modules of a ring $R$ is ind-abelian. Indeed, since the kernel of an epimorphism between finitely presentable modules is finitely generated, we can write it as a quotient of a finitely generated free module. This shows that every epimorphism is regular.

 To see part~(ii) of the definition, note that the assumption implies that $f$ factors through the image of $p$. If the pullback along this factorization is finitely presentable, we are done. If not, we can still find a finite set of elements in it whose image generates $A$. We can then take $B$ to be any finitely generated free module surjecting onto these elements. 
\end{example}

 In \S \ref{section:ind_abelian} we prove the following theorem.
 
\begin{thm}\label{thm:ind_abelian}
 A finitely cocomplete $R$-linear category $\ca{A}$ is ind-abelian if and only if $\Ind(\ca{A})$ is abelian. 
\end{thm}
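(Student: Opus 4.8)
The plan is to transfer exactness properties back and forth between $\ca A$ and $\Ind(\ca A)$, using throughout that the canonical embedding $\ca A\hookrightarrow\Ind(\ca A)$ is fully faithful, preserves finite colimits, and identifies $\ca A$ with a generating family of finitely presentable objects, and that --- since $\ca A$ is finitely cocomplete --- $\Ind(\ca A)$ is additive, complete and cocomplete, with filtered colimits commuting with finite limits. I write $\operatorname{coker}$ for cokernel and, for a morphism $g$ of $\Ind(\ca A)$ with target $B_0$, set $\operatorname{coim}(g)=\operatorname{coker}(\ker g\to\dom g)$ and $\operatorname{im}(g)=\ker(B_0\to\operatorname{coker} g)$.

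Assume first that $\Ind(\ca A)$ is abelian. For condition~(i), an epimorphism $p\colon A\to B$ of $\ca A$ has vanishing cokernel in $\ca A$, hence in $\Ind(\ca A)$, so $p$ is epic there and equals the cokernel of $\ker p\to A$; writing $\ker p$ as a filtered colimit of objects $K_i$ of $\ca A$ gives $B\cong\colim_i\operatorname{coker}(K_i\to A)$, and compactness of $B$ produces a section of the comparison map $q_j\colon\operatorname{coker}(K_j\to A)\to B$ for some $j$. Since $q_j$ composed with $A\to\operatorname{coker}(K_j\to A)$ equals $p$, hence is epic, $q_j$ is an epimorphism with a section and therefore an isomorphism, which exhibits $p$ as a cokernel in $\ca A$. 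For condition~(ii), with $q=\operatorname{coker}(p)$ the hypothesis $qf=0$ forces $f$ to factor through $\ker q$, which in the abelian category $\Ind(\ca A)$ is the image of $p$; the pullback $P$ of $W\to\ker q$ along $A\to\ker q$ then surjects onto $A$, and writing $P$ as a filtered colimit of objects $B_k$ of $\ca A$ and using that $A$ is compact while the images of the maps $B_k\to A$ exhaust $A$, one finds a single $B_k$ for which $B_k\to A$ is epic; then $p'\colon B_k\to A$ together with $f'\colon B_k\to P\to W$ satisfy the required commutativity.

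Now assume $\ca A$ is ind-abelian. Since $\Ind(\ca A)$ is additive with kernels and cokernels, it is abelian provided the canonical morphism $\operatorname{coim}(g)\to\operatorname{im}(g)$ is an isomorphism for every morphism $g$. Presenting an arbitrary $g$ as a filtered colimit of morphisms of $\ca A$ and using that kernels, cokernels, images and coimages in $\Ind(\ca A)$ all commute with filtered colimits, one reduces to treating $f\colon A\to B$ in $\ca A$; here $C=\operatorname{coker}(f)\in\ca A$. Write $\ker f=\colim_\lambda K_\lambda$ with $K_\lambda\in\ca A$, put $A^\lambda=\operatorname{coker}(K_\lambda\to A)$, so that $\operatorname{coim}(f)=\colim_\lambda A^\lambda$ and $f$ induces $f^\lambda\colon A^\lambda\to B$. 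The two things to prove are: (1) $\operatorname{coim}(f)\to B$ is a monomorphism, equivalently (filtered colimits being exact) $\colim_\lambda\ker(f^\lambda)=0$; and (2) every morphism $N\to B$ with $N\in\ca A$ whose composite to $C$ vanishes factors through $\operatorname{coim}(f)$. For~(1), given $v\colon A'\to A^\lambda$ with $A'\in\ca A$ and $f^\lambda v=0$, apply condition~(ii) to the degenerate right exact row $A\xrightarrow{\pi_\lambda}A^\lambda\to0$ (with $\pi_\lambda$ the projection) to lift $v$, along an epimorphism $A''\twoheadrightarrow A'$, to a map $w\colon A''\to A$ with $\pi_\lambda w=v\circ(A''\to A')$; then $fw=0$, so $w$ factors through $\ker f$ and hence through some $K_{\mu_0}$, and a short diagram chase shows that $v$ becomes zero in $A^\mu$ for any $\mu$ above $\lambda$ and $\mu_0$. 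For~(2), condition~(ii) applied to the row $A\xrightarrow{f}B\to C\to0$ lifts $N\to B$ through $f$ along an epimorphism $e\colon N'\twoheadrightarrow N$, say via $N'\to A$; since $e$ is a cokernel by condition~(i) and $\operatorname{coim}(f)\to B$ is a monomorphism by~(1), the composite $N'\to A\to\operatorname{coim}(f)$ factors through $e$, which gives the desired factorization of $N\to B$ through $\operatorname{coim}(f)$. Finally, writing $\operatorname{im}(f)=\ker(B\to C)$ as a filtered colimit of objects $N_\gamma$ of $\ca A$, statement~(2) factors each $N_\gamma\to\operatorname{im}(f)$ through $\operatorname{coim}(f)$, and uniqueness of these factorizations (by~(1)) makes them compatible and assembles a two-sided inverse of $\operatorname{coim}(f)\to\operatorname{im}(f)$, so this map is an isomorphism and $\Ind(\ca A)$ is abelian.

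I expect the main obstacle to be part~(1): the key point is to recognize that condition~(ii), applied to degenerate right exact sequences, provides local lifting along regular epimorphisms, and then to organize the filtered colimit presentations of the various kernels carefully enough to conclude that $\ker(f^\lambda)$ vanishes in the colimit. Everything else is either formal $\Ind$-category yoga or a direct application of conditions~(i) and~(ii).
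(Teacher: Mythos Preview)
Your proof is correct. For the easier direction ($\Ind(\ca{A})$ abelian implies $\ca{A}$ ind-abelian) you give essentially the paper's argument (Proposition~\ref{prop:fp_objects_ind_abelian}), phrased as direct compactness manipulations rather than via a separate lemma on finitely generated kernels. For the harder direction the two proofs diverge: the paper shows that the left exact presheaves on $\ca{A}$ coincide with the sheaves for the singleton coverage of regular epimorphisms (Proposition~\ref{prop:sheaf_iff_exact}), so that $\Ind(\ca{A})\simeq\Lex[\ca{A}^{\op},\Mod_R]$ is lex-reflective in the presheaf category and hence abelian; you instead verify $\operatorname{coim}=\operatorname{im}$ by hand, reducing via exactness of filtered colimits to a morphism $f\colon A\to B$ in $\ca{A}$ and then applying condition~(ii) of Definition~\ref{dfn:ind_abelian} twice---once to the degenerate sequence $A\twoheadrightarrow A^\lambda\to 0$ to kill $\ker f^\lambda$ in the colimit, and once to $A\to B\to C\to 0$ to produce factorizations through the coimage. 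Your route is more elementary and self-contained; the paper's buys reusable machinery (ind-classes and the sheaf-theoretic description of $\Lex_\Sigma$), which it exploits in \S\ref{section:kelly_tensor_field} to show that ind-abelian categories over a field are closed under the Kelly tensor product.
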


 The author's interest in this question stems from the desire to give a characterization of categories of finitely presentable quasi-coherent sheaves of algebraic stacks. Before stating this theorem in detail we outline an application.




\subsection{The Deligne tensor product of categories of coherent sheaves}\label{section:deligne_tensor_intro}
 Fix a commutative ring $R$. All the schemes we consider are schemes over $\Spec(R)$. Recall that a noetherian scheme has the \emph{resolution property} if every coherent sheaf is a quotient of a locally free sheaf of finite rank. 
 
 If $\ca{A}$ and $\ca{B}$ are $R$-linear abelian categories, their \emph{Deligne tensor product} $\ca{A}\boxtimes \ca{B}$, if it exists, is the universal abelian category with a functor
\[
 \ca{A} \times \ca{B} \rightarrow \ca{A} \boxtimes \ca{B}
\]
 which is $R$-linear and right exact in each variable (see \cite[\S 5]{DELIGNE}). One of the aims of this article is to prove the following theorem.

\begin{thm}\label{thm:scheme_product_to_coproduct}
 Let $X$ and $Y$ be semi-separated noetherian schemes with the resolution property. If $X\times Y$ is again noetherian, then there is an equivalence
\[
 \Coh(X\times Y) \simeq \Coh(X)\boxtimes \Coh(Y)
\]
 of symmetric monoidal categories.
\end{thm}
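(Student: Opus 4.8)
The plan is to reduce the statement about coherent sheaves to a statement about categories of finitely presentable quasi-coherent sheaves, and then to apply the Tannakian machinery together with the main results of the paper. The key point is that for a semi-separated noetherian scheme $X$ with the resolution property, the category $\Coh(X)$ of coherent sheaves generates (under filtered colimits) the category $\QCoh(X)$ of all quasi-coherent sheaves, so that $\Ind(\Coh(X)) \simeq \QCoh(X)$; moreover $\Coh(X)$ is the category of finitely presentable objects of $\QCoh(X)$. Thus the equivalence we want is, after passing to ind-completions, equivalent to an equivalence $\QCoh(X\times Y) \simeq \QCoh(X) \boxtimes \QCoh(Y)$ at the level of cocomplete $R$-linear tensor categories, where $\boxtimes$ now denotes the cocomplete tensor product (the ind-completion of Deligne's). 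Since $\Ind$ is compatible with the relevant universal properties — $\Ind(\ca{A}\boxtimes\ca{B})$ is the universal cocomplete category receiving a right-exact-in-each-variable functor from $\Ind(\ca{A})\times\Ind(\ca{B})$ — it suffices to produce the latter equivalence and to know that $\Coh(X)$, $\Coh(Y)$, and $\Coh(X\times Y)$ are ind-abelian (so that Theorem~\ref{thm:ind_abelian} guarantees their ind-completions are abelian and the Deligne tensor product on the left genuinely exists).

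First I would establish the ind-abelianness and the identifications $\Ind(\Coh(X))\simeq\QCoh(X)$ etc.; for noetherian schemes $\Coh(X)$ is actually abelian, so this part is classical, but it is convenient to phrase everything through the weakly Tannakian formalism because that is what gives access to the stack side. Concretely, a semi-separated noetherian scheme $X$ with the resolution property, regarded as (the coherent sheaves on) an algebraic stack, has $\Coh(X)$ weakly Tannakian in the generalized sense of the paper, and Tannaka duality reconstructs $X$ from $\Coh(X)$ as a tensor category together with its fibre-functor data. Next I would use the universal property of the Deligne tensor product to produce a canonical symmetric monoidal right-exact-in-each-variable functor $\Coh(X)\times\Coh(Y)\to\Coh(X\times Y)$, namely $(\ca{F},\ca{G})\mapsto \pr_X^*\ca{F}\otimes\pr_Y^*\ca{G}$, inducing $\Coh(X)\boxtimes\Coh(Y)\to\Coh(X\times Y)$. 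The real content is that this functor is an equivalence.

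To prove it is an equivalence I would argue on the Tannakian/stack side. Applying Tannaka duality, the tensor category $\Coh(X)\boxtimes\Coh(Y)$ corresponds to some algebraic stack $Z$, equipped with maps $Z\to X$ and $Z\to Y$ coming from the two structure functors, hence a map $Z\to X\times Y$; one checks that this map is an isomorphism by verifying the universal property of the fibre product $X\times Y$ at the level of the reconstructed stacks — a map $T\to Z$ is the same as a pair of right-exact symmetric monoidal functors out of $\Coh(X)$ and $\Coh(Y)$ to $\Coh(T)$ (suitably compatible), which by Tannaka duality is the same as a pair of maps $T\to X$, $T\to Y$, i.e. a map $T\to X\times Y$. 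The functor on coherent sheaves induced by $Z\xrightarrow{\sim} X\times Y$ is then the inverse equivalence, provided $X\times Y$ also has the resolution property so that its coherent sheaves fit the Tannakian framework. That last hypothesis is exactly the separate theorem advertised in the abstract: the class of quasi-compact, semi-separated schemes (here also noetherian) with the resolution property is closed under fibre products; I would invoke that result, whose proof is the main technical obstacle and is carried out elsewhere in the paper by a descent argument building locally free resolutions on $X\times Y$ out of those on $X$ and $Y$ over an affine base. The symmetric monoidality of the final equivalence is automatic since every functor in sight was constructed as a symmetric monoidal functor. The main obstacle overall is thus twofold: verifying that the Deligne tensor product $\Coh(X)\boxtimes\Coh(Y)$ stays within the class of categories to which Tannaka duality applies (which is why the ind-abelian characterization of Theorem~\ref{thm:ind_abelian} is needed — it is the mechanism ensuring that forming $\Ind$ and $\boxtimes$ does not leave the abelian world), and establishing closure of the resolution property under fibre products.
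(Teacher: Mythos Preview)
Your proposal is essentially correct and follows the same four-step strategy the paper lays out in \S\ref{section:deligne_tensor_intro}: embed via Tannaka duality, close the resolution property under products, identify $\boxtimes$ as a bicategorical coproduct, and show the tensor product of weakly Tannakian categories is again weakly Tannakian. Your phrasing in terms of a reconstructed stack $Z$ with a comparison map $Z\to X\times Y$ is the Yoneda restatement of the paper's argument that both $\Coh(X\times Y)$ and $\Coh(X)\boxtimes\Coh(Y)$ are coproducts in the image of $\QCoh_{\fp}(-)$.

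Two points where your write-up drifts from the paper. First, the opening detour through $\Ind$ and cocomplete tensor products is neither needed nor how the paper proceeds: the paper works directly with Kelly's tensor product of finitely cocomplete categories, proves $\QCoh_{\fp}(X\times Y)\simeq\QCoh_{\fp}(X)\boxtimes\QCoh_{\fp}(Y)$ (Theorem~\ref{thm:stack_product_to_coproduct}), and then observes that noetherianness of $X\times Y$ makes $\QCoh_{\fp}=\Coh$ and forces the Kelly tensor product to be abelian, whence it satisfies Deligne's universal property. The role of Theorem~\ref{thm:ind_abelian} is not to guarantee that Deligne's tensor product exists for $\Coh(X)$ and $\Coh(Y)$ (those are already abelian), but to supply a framework in which the recognition theorem applies to $\QCoh_{\fp}(X)\boxtimes\QCoh_{\fp}(Y)$, which a priori need not be abelian. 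Second, the step ``a map $T\to Z$ is the same as a pair of right-exact symmetric monoidal functors out of $\Coh(X)$ and $\Coh(Y)$'' is not the defining universal property of $\boxtimes$ (which concerns functors right exact in each variable, not pairs of monoidal functors); it is the separate statement that $\boxtimes$ gives bicategorical \emph{coproducts} of symmetric pseudomonoids, and there is no ``suitably compatible'' condition on the pair. The paper isolates this as Theorem~\ref{thm:rex_tensor_coproduct} (via Theorem~\ref{thm:pseudomonoid_coproduct}) and devotes \S\ref{section:rex_coproduct} and the appendix to it; you use it correctly but should flag it as a genuine ingredient rather than folding it into the universal property of the tensor product.
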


 This gives a partial answer to a question posed by Martin Brandenburg on MathOverflow (\url{http://mathoverflow.net/questions/55735}). The following provides a strategy for proving this theorem:
\begin{enumerate}
 \item[(i)]  We show that the contravariant functor $\Coh(-)$ which sends a noetherian scheme with the resolution property to its category of coherent sheaves gives an embedding of the category of such schemes in the 2-category of $R$-linear abelian tensor categories.
 \item[(ii)] We show that $X \times Y$ has the resolution property if $X$ and $Y$ do.
\item[(iii)] We show that the Deligne tensor product of two $R$-linear abelian tensor categories is a (bicategorical) coproduct in the 2-category of $R$-linear abelian tensor categories.
\item[(iv)] We show that the Deligne tensor product $\Coh(X) \boxtimes \Coh(Y)$ lies in the image of $\Coh(-)$, that is, it is equivalent as $R$-linear tensor category to $\Coh(Z)$ for some noetherian scheme $Z$ with the resolution property.
\end{enumerate}

 If we can prove (i)-(iv), then we can prove Theorem~\ref{thm:scheme_product_to_coproduct} as follows. From (i) and (ii) we deduce that $\Coh(X\times Y)$ is a (bicategorical) coproduct in the image of $\Coh(-)$. Note that there is a priori no reason why $\Coh(X\times Y)$ should be a coproduct among \emph{all} $R$-linear abelian tensor categories. But from (iii) and (iv) we know that $\Coh(X)\boxtimes \Coh(Y)$ is a coproduct in the image of $\Coh(-)$, so it must be equivalent to $\Coh(X\times Y)$.

 Unfortunately there are two major difficulties with this strategy. First, the Deligne tensor product of two $R$-linear abelian tensor categories need not exist. The second difficulty lies in the fourth step, where we have to show that a certain abstract abelian tensor category is the category of coherent sheaves on some noetherian scheme $Z$ with the resolution property.

 To address the first problem it is convenient to drop the noetherian assumption. If we do that, there is no reason to expect that the finitely presentable quasi-coherent sheaves on $X\times Y$ coincide with the coherent ones. If we want to keep fact (i), we have to consider the category $\QCoh_{\fp}(X)$ of finitely presentable sheaves instead of the category $\Coh(X)$ of coherent sheaves. This category is in general not abelian, but it has cokernels, hence it is finitely cocomplete. Instead of the Deligne tensor product we can consider the analogous universal property among all finitely cocomplete $R$-linear categories. Kelly has shown that this tensor product always exists \cite[\S 6.5]{KELLY_BASIC}. We call it the \emph{right exact} tensor product.

 To address the second difficulty of showing that the right exact tensor product
 $\Coh(X)\boxtimes \Coh(Y)$ is of the form $\Coh(Z)$ for some scheme $Z$, we would like to use a Tannakian recognition theorem. To the author's knowledge there is no recognition theorem that characterizes categories of coherent sheaves schemes directly. However, in \cite{SCHAEPPI_STACKS}, the author proved such a recognition theorem for a class of algebraic stacks which contains all the (discrete) stacks represented by semi-separated noetherian schemes with the resolution property (see \cite[Theorem~1.1.2]{SCHAEPPI_STACKS}). Thus, instead of showing that Theorem~\ref{thm:scheme_product_to_coproduct} holds for schemes, we will prove its generalization to a suitable class of algebraic stacks.

 Therefore, to solve both difficulties at once, we have to consider categories $\QCoh_{\fp}(X)$ of finitely presentable quasi-coherent sheaves on algebraic stacks $X$. These categories need not be abelian in general, so the recognition theorem from \cite{SCHAEPPI_STACKS} does not apply to them. But the category $\QCoh_{\fp}$ is ind-abelian. Our first aim is therefore to generalize the Tannakian recognition theorem from \cite{SCHAEPPI_STACKS} to the context of ind-abelian categories.



\subsection{The Tannakian recognition theorem for Adams stacks}\label{section:recognition_theorem_intro}
 Following Naumann \cite{NAUMANN}, we call a stack on the $\fpqc$-site $\Aff_R$ of affine schemes over $R$ \emph{algebraic} if it is  associated to a flat affine groupoid. An $\fpqc$-stack $X$ is algebraic if and only if it has an affine diagonal and there exists a faithfully flat morphism $X_0 \rightarrow X$ for some affine scheme $X_0$. An algebraic stack is \emph{coherent} if the category $\QCoh_{\fp}(X)$ of finitely presentable sheaves is abelian. In this case a sheaf is locally presentable if and only if it is coherent.

 An algebraic stack $X$ over $R$ has the \emph{strong resolution property} if the dual objects form a generator of $\QCoh(X)$, that is, for every $M \in \QCoh(X)$, there exists an epimorphism
\[
 \xymatrix{ \bigoplus_{i\in I} M_i \ar[r] & M }
\]
 of quasi-coherent sheaves where all the $M_i$ have duals. An algebraic stack with the strong resolution property is called an \emph{Adams stack}.

 Every quasi-compact and semi-separated scheme $X$ represents a discrete algebraic stack. It is an Adams stack if $X$ has the strong resolution property. In particular, every semi-separated noetherian scheme with the resolution property represents an Adams stack.

 Before stating the Tannakian recognition theorem for Adams stacks we recall the recognition theorem from \cite{SCHAEPPI_STACKS}. Let $\ca{A}$ be a symmetric monoidal abelian $R$-linear category, and let $B$ be a commutative $R$-algebra. Recall that
\[
 w \colon \ca{A} \rightarrow \Mod_B
\]
 is called a \emph{fiber functor} if it is faithful, exact, and strong symmetric monoidal. The category $\ca{A}$ is called \emph{weakly Tannakian} if there exists a fiber functor on $\ca{A}$ and if for every $A \in \ca{A}$ there exists a dualizable object $A^{\prime} \in \ca{A}$ and an epimorphism $A^{\prime} \rightarrow A$. The recognition theorem from \cite{SCHAEPPI_STACKS} states that $\ca{A}$ is weakly Tannakian if and only if there exists a coherent algebraic stack $X$ with the resolution property and an equivalence
\[
 \ca{A} \simeq \Coh(X)
\]
 of symmetric monoidal categories (see \cite[Theorem~1.1.2]{SCHAEPPI_STACKS}).

 The notion of ind-abelian categories allows us to generalize this recognition theorem to give a characterization of categories of the form $\QCoh_{\fp}(X)$ where $X$ is a not necessarily coherent Adams stack. 

\begin{dfn}\label{dfn:weakly_tannakian}
 Let $\ca{A}$ be an ind-abelian right exact symmetric monoidal $R$-linear category, and let $B$ be a commutative $R$-algebra. A functor
\[
 w \colon \ca{A} \rightarrow \Mod_B
\]
 is called a \emph{fiber functor} if it is faithful, flat, and right exact. 

 If $\ca{A}$ satisfies the conditions:
\begin{enumerate}
 \item[(i)] There exists a fiber functor $w \colon \ca{A} \rightarrow \Mod_B$ for some commutative $R$-algebra $B$;
\item[(ii)] For all objects $A \in \ca{A}$ there exists an epimorphism $A^{\prime} \rightarrow A$ such that $A^{\prime}$ has a dual;
\end{enumerate}
 it is called \emph{weakly Tannakian}.
\end{dfn}

 See \S \ref{section:flat_functors} for a definition of flat functors. In \S \ref{section:weakly_tannakian} we prove the following generalization of the recognition theorem from \cite{SCHAEPPI_STACKS}.

\begin{thm}\label{thm:recognition}
 Let $R$ be a commutative ring. An $R$-linear ind-abelian category $\ca{A}$ is weakly Tannakian if and only if there exists an Adams stack $X$ over $R$ and an equivalence
\[
 \ca{A} \simeq \QCoh_{\fp}(X)
\]
 of symmetric monoidal $R$-linear categories.
\end{thm}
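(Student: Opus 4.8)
The plan is to prove the two implications separately, in each case passing to ind-objects so as to reduce to the abelian situation governed by Theorem~\ref{thm:ind_abelian}.

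\textbf{Sufficiency.} If $X$ is an Adams stack, I would check directly that $\ca{A} = \QCoh_{\fp}(X)$ satisfies Definition~\ref{dfn:weakly_tannakian}. Since $X$ is algebraic there is a faithfully flat morphism $u \colon \Spec(B) \to X$ with $B$ a commutative $R$-algebra, and because finite presentability is stable under arbitrary base change, pullback along $u$ restricts to a functor $u^{*} \colon \QCoh_{\fp}(X) \to \Mod_B^{\fp} \subseteq \Mod_B$. This functor is faithful by faithfully flat descent, and it is right exact and flat because $u$ is flat, so it is a fiber functor. For condition~(ii), the strong resolution property gives, for each $M \in \QCoh(X)$, an epimorphism $\bigoplus_{i \in I} M_i \twoheadrightarrow M$ with every $M_i$ dualizable; if $M$ is finitely presentable, hence finitely generated, then a finite partial sum $\bigoplus_{i \in F} M_i$ already surjects onto $M$, and such a finite direct sum of dualizable objects is again dualizable. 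Finally $\QCoh_{\fp}(X)$ is finitely cocomplete, closed under the tensor product, and ind-abelian: its category of ind-objects is $\QCoh(X)$, which is (Grothendieck) abelian, so Theorem~\ref{thm:ind_abelian} applies.

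\textbf{Necessity.} Let $\ca{A}$ be weakly Tannakian with fiber functor $w \colon \ca{A} \to \Mod_B$. By Theorem~\ref{thm:ind_abelian} the category $\ca{B} \defl \Ind(\ca{A})$ is abelian; it is cocomplete and $R$-linear, its tensor product preserves colimits in each variable, and, since a finitely cocomplete category is idempotent complete, the Yoneda embedding identifies $\ca{A}$ with the full subcategory of finitely presentable objects of $\ca{B}$. The fiber functor extends to a cocontinuous symmetric monoidal functor $\hat{w} \colon \ca{B} \to \Mod_B$; right exactness of $w$ makes $\hat{w}$ right exact, and flatness of $w$ is precisely the condition (see \S\ref{section:flat_functors}) that makes $\hat{w}$ left exact, hence exact. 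Using that every object of $\ca{B}$ is a filtered colimit of objects of $\ca{A}$, condition~(ii) for $\ca{A}$ upgrades to the statement that the dualizable objects generate $\ca{B}$. I would then run the Tannakian reconstruction of \cite{SCHAEPPI_STACKS}, with $\ca{B}$ in place of the category of coherent sheaves, to produce an Adams stack $X$ together with an equivalence $\ca{B} \simeq \QCoh(X)$ of symmetric monoidal $R$-linear categories; restricting to finitely presentable objects gives $\ca{A} \simeq \QCoh_{\fp}(X)$.

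The main obstacle is this last step. One must show that the reconstruction, carried out for the merely right exact and flat functor $w$ rather than an exact and conservative one, still yields an \emph{affine} flat groupoid, so that one really obtains an algebraic stack, and that this stack has the \emph{strong} resolution property and not just an abstract generation of $\QCoh(X)$ by duals; equivalently, that the comodule category produced by the reconstruction is genuinely the category of quasi-coherent sheaves on an Adams stack. The ind-abelian hypothesis enters exactly to make $\ca{B} = \Ind(\ca{A})$ abelian, which is what allows the bar-construction and coalgebroid machinery of \cite{SCHAEPPI_STACKS} to be used at all; the remaining technical content, taken up in \S\ref{section:flat_functors} and \S\ref{section:weakly_tannakian}, is to verify that flatness of $w$ is the right substitute for exactness and that faithfulness of $w$ on $\ca{A}$, rather than of $\hat{w}$ on all of $\ca{B}$, suffices for the reconstruction.
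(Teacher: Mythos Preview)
Your plan matches the paper's approach: for sufficiency, check directly that $\QCoh_{\fp}(X)$ is ind-abelian (its ind-completion is the abelian category $\QCoh(X)\simeq\Comod(B,\Gamma)$) and that pullback along a faithfully flat affine cover gives a fiber functor; for necessity, pass to $\Ind(\ca{A})$, show that the extended functor $L=\hat w$ is exact and conservative, and reconstruct a Hopf algebroid from the resulting comonad. The obstacles you flag at the end are exactly the ones the paper has to resolve, and it does so in Theorem~\ref{thm:recognition_hopf} rather than by quoting \cite{SCHAEPPI_STACKS} as a black box.

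Two places where you should be more precise. First, ``flat because $u$ is flat'' is too quick: flatness of $u^{*}$ in the sense of Definition~\ref{dfn:flat} does not follow formally from flatness of $u$; the paper obtains it via characterization~(v) of Proposition~\ref{prop:flat_characterization}, using that the forgetful functor $\Comod(B,\Gamma)\to\Mod_B$ is the left Kan extension of its restriction to finitely presentable comodules and is exact because $\Gamma$ is $B$-flat. Second, the recognition theorem of \cite{SCHAEPPI_STACKS} characterizes \emph{small} abelian categories as $\Coh(X)$, so it cannot be applied directly to the large category $\ca{B}=\Ind(\ca{A})$. The paper therefore reproves the relevant steps: Proposition~\ref{prop:L_comonadic} shows $L$ is comonadic, and the faithfulness issue you raise is handled there by observing that $w(A)$ is finitely presentable for every $A\in\ca{A}$ (via the right exact sequence~\eqref{eqn:cokernel_of_duals} with dualizable outer terms), so the right adjoint of $L$ preserves filtered colimits and the unit of $L$ is a filtered colimit of monomorphisms, hence a monomorphism in the Grothendieck abelian category $\Ind(\ca{A})$; Proposition~\ref{prop:duals_dense} and Corollary~\ref{cor:comonad_Hopf_monoidal} then identify the comonad with one coming from a flat Hopf algebroid, and the Adams condition follows from \cite[Theorem~1.3.1]{SCHAEPPI_STACKS} because $\ca{A}^d$ generates $\Ind(\ca{A})$.
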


 Using this we can prove the following theorem, of which Theorem~\ref{thm:scheme_product_to_coproduct} is a consequence.

\begin{thm}\label{thm:stack_product_to_coproduct}
 Let $X$ and $Y$ be Adams stacks over $R$. Then there is an equivalence
\[
 \QCoh_{\fp}(X \times Y) \simeq  \QCoh_{\fp}(X) \boxtimes  \QCoh_{\fp}(Y)
\]
 of symmetric monoidal categories, where $\boxtimes$ denotes Kelly's tensor product of finitely cocomplete $R$-linear categories.
\end{thm}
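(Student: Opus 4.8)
The plan is to follow the blueprint~(i)--(iv) of the introduction, with $\Coh$ replaced by $\QCoh_{\fp}$ and schemes by Adams stacks, using Theorem~\ref{thm:recognition} as the recognition step and the universal property of Kelly's tensor product in place of Deligne's. Write $\ca{C} \defl \QCoh_{\fp}(X) \boxtimes \QCoh_{\fp}(Y)$. Kelly's tensor product of finitely cocomplete $R$-linear categories is again finitely cocomplete and $R$-linear, and when the factors are right exact symmetric monoidal it carries a right exact symmetric monoidal structure for which it is a bicategorical coproduct in that $2$-category (this is the $R$-linear, right exact form of step~(iii)). So the whole theorem reduces to showing that $\ca{C}$ is weakly Tannakian and that the Adams stack it recognizes is $X \times Y$.

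\emph{Step 1: $\ca C$ is weakly Tannakian.} One must verify that $\ca C$ is ind-abelian and satisfies conditions~(i)--(ii) of Definition~\ref{dfn:weakly_tannakian}. Ind-abelianness is a closure property of the class of ind-abelian categories under $\boxtimes$; by Theorem~\ref{thm:ind_abelian} it suffices to show that $\Ind(\ca C)$ is abelian, for which one analyses $\Ind\bigl(\QCoh_{\fp}(X)\boxtimes\QCoh_{\fp}(Y)\bigr)$ (concretely, it should be identified with the cocomplete tensor product of $\QCoh(X)$ and $\QCoh(Y)$, a Grothendieck abelian category). For condition~(i), pick fiber functors $w_X\colon\QCoh_{\fp}(X)\to\Mod_B$ and $w_Y\colon\QCoh_{\fp}(Y)\to\Mod_C$, which exist since $X$ and $Y$ are Adams stacks; the $R$-bilinear functor $(A,A')\mapsto w_X(A)\otimes_R w_Y(A')$, right exact in each variable, extends by the universal property of $\boxtimes$ to a right exact $R$-linear functor $w\colon\ca C\to\Mod_{B\otimes_R C}$, and one checks that $w$ is faithful and flat. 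For condition~(ii), recall that in Kelly's construction every object of $\ca C$ is a cokernel of a morphism between finite direct sums of external products $A\boxtimes A'$ with $A\in\QCoh_{\fp}(X)$ and $A'\in\QCoh_{\fp}(Y)$; choosing epimorphisms from dualizable objects onto the factors (possible since $\QCoh_{\fp}(X)$ and $\QCoh_{\fp}(Y)$ satisfy~(ii)) and using that external products are right exact in each variable, that external products of dualizable objects are dualizable, and that composites of epimorphisms are epimorphisms, one obtains an epimorphism from a dualizable object onto any such cokernel. Hence $\ca C$ is weakly Tannakian, and by Theorem~\ref{thm:recognition} there is an Adams stack $Z$ with $\ca C\simeq\QCoh_{\fp}(Z)$ as symmetric monoidal $R$-linear categories.

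\emph{Step 2: the recognized stack is $X\times Y$.} The projections $X\times Y\to X$ and $X\times Y\to Y$ yield right exact symmetric monoidal pullback functors into $\QCoh_{\fp}(X\times Y)$, and hence, by the universal property of $\boxtimes$, a right exact symmetric monoidal comparison functor $\Phi\colon\ca C\to\QCoh_{\fp}(X\times Y)$; we must show $\Phi$ is an equivalence. First, $X\times Y$ is again an Adams stack: it is algebraic because a product of flat affine presentations is a flat affine presentation and the diagonal remains affine, and it has the strong resolution property because external products of dualizable sheaves are dualizable and, since external products generate $\QCoh(X\times Y)$, such objects form a generator (for schemes this closure statement is the one advertised in the abstract). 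Granting this, $Z$ and $X\times Y$ are both Adams stacks, so combining $\ca C\simeq\QCoh_{\fp}(Z)$, the universal property of $\boxtimes$, the full faithfulness of $\QCoh_{\fp}(-)$ on $1$-morphisms that is part of the Tannaka duality underlying Theorem~\ref{thm:recognition}, and the universal property of the product, one gets for every Adams stack $T$ a natural equivalence
\[
 \Hom(T,Z)\simeq\Hom^{\otimes}_{\mathrm{rex}}\bigl(\QCoh_{\fp}(Z),\QCoh_{\fp}(T)\bigr)\simeq\Hom(T,X)\times\Hom(T,Y)\simeq\Hom(T,X\times Y)
\]
sending $\id_Z$ to the morphism represented by $\Phi$. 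By the $2$-Yoneda lemma the corresponding morphism $X\times Y\to Z$ is an equivalence, so $\Phi$ is an equivalence of symmetric monoidal categories. (Alternatively one verifies directly that $\Phi$ is an equivalence by descending along faithfully flat affine covers $\Spec B\to X$, $\Spec C\to Y$ to the affine case, where the claim reduces to $\Mod_{B\otimes_R C}^{\fp}\simeq\Mod_B^{\fp}\boxtimes\Mod_C^{\fp}$, i.e.\ to $\Spec(B\otimes_R C)=\Spec B\times\Spec C$.)

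\emph{Main obstacle.} The heart of the matter is Step~1 --- showing that $\boxtimes$ preserves ind-abelianness and that $w_X\boxtimes w_Y$ remains a fiber functor. This is exactly where ind-abelian categories are indispensable, since the $\QCoh_{\fp}(X)$ need not be abelian and Deligne's machinery therefore does not apply; it draws on Theorem~\ref{thm:ind_abelian} and on a good grip on $\Ind$ of a Kelly tensor product. The secondary delicate point is the strong resolution property of $X\times Y$, which for schemes is precisely the statement that quasi-compact semi-separated schemes with the resolution property are stable under fiber products.
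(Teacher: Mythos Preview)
Your proposal follows the same four-step strategy as the paper's proof, and the overall architecture is correct: the embedding result, closure of Adams stacks under products, $\boxtimes$ as bicategorical coproduct in right exact symmetric monoidal categories, and closure of weakly Tannakian categories under $\boxtimes$ combine exactly as you describe.

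Two execution points deserve comment. First, you phrase ind-abelianness of $\ca{C}$ as a free-standing ``closure property of ind-abelian categories under $\boxtimes$,'' to be proved by identifying $\Ind(\ca{C})$ with a cocomplete tensor product. The paper does \emph{not} establish such a general closure result over an arbitrary base ring $R$ (it is only proved over a field, in \S\ref{section:kelly_tensor_field}); instead the paper bootstraps off the fiber functors themselves. Concretely, it shows that the Kan extension $L$ of $Uw\otimes Uv$ to $\Lex_\Sigma$ is exact and conservative (Lemma~\ref{lemma:L_lex_conservative}), and from this deduces that $\Lex_\Sigma$ is abelian and that $\ca{A}\boxtimes\ca{B}$ is its subcategory of finitely presentable objects (Lemma~\ref{lemma:tensor_product_indabelian}), whence ind-abelian. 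So the ind-abelianness and the fiber-functor verification are intertwined rather than sequential; your ``Main obstacle'' paragraph correctly flags this as the heart of the matter, but the suggested route via a general $\Ind$-of-$\boxtimes$ identification is not the one taken and would require separate justification. Second, for the strong resolution property of $X\times Y$ the paper argues via Adams Hopf algebroids (Theorem~\ref{thm:product_adams}): coproducts of Adams Hopf algebroids are Adams, since $\Gamma\otimes_R\Sigma$ is a filtered colimit of the dualizable $\Gamma_i\otimes_R\Sigma_j$. Your alternative sketch (external products of dualizables generate) is plausible but the claim that external products generate $\QCoh(X\times Y)$ is itself nontrivial; the Hopf-algebroid route sidesteps this.
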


 The proof follows the strategy outlined in \S \ref{section:deligne_tensor_intro}. The pseudofunctor which sends an Adams stack $X$ to $\QCoh_{\fp}(X)$ is an embedding by \cite[Theorem~1.3.3]{SCHAEPPI_STACKS}. Thus step~(i) of the strategy works for Adams stacks. The analogue of step~(ii) for Adams stacks is a consequence of the following theorem, which we prove in \S \ref{section:adams}.

\begin{thm}\label{thm:adams_fiber_product}
 The class of Adams stacks is closed under fiber products.
\end{thm}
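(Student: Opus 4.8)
The plan is to verify the two defining properties of an Adams stack separately: that $X \times_Z Y$ is algebraic, and that it has the strong resolution property. The first is routine. Recall that an $\fpqc$-stack is algebraic iff it has affine diagonal and admits a faithfully flat morphism from an affine scheme. Writing $W := X\times_Z Y = (X\times_R Y)\times_{Z\times_R Z} Z$, the natural morphism $W\to X\times_R Y$ is a base change of the affine diagonal $Z\to Z\times_R Z$, hence affine, and $X\times_R Y$ has affine diagonal since $\Delta_{X\times_R Y}$ is, up to reordering factors, $\Delta_X\times_R \Delta_Y$; an affine morphism into a stack with affine diagonal has affine diagonal (factor the diagonal through the relative diagonal, a closed immersion, followed by a base change of the diagonal of the target). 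For the cover, choose faithfully flat morphisms $f_X\colon X_0\to X$, $f_Y\colon Y_0\to Y$ from affine schemes; then $X_0\times_Z Y_0\to W$ is faithfully flat, being a composite of base changes of $f_X$ and $f_Y$, and $X_0\times_Z Y_0$ is affine because $X_0\to Z$ and $Y_0\to Z$ are affine (any morphism from an affine scheme to a stack with affine diagonal is affine), so that $X_0\times_Z Y_0\to Y_0$ is affine.

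For the resolution property, write $p\colon W\to X$ and $q\colon W\to Y$ for the projections and $f\colon W_0 := X_0\times_Z Y_0\to W$ for the above cover; so $W_0 = \Spec D$ for a ring $D$. Two observations. First, if $F\in\QCoh(X)$ and $G\in\QCoh(Y)$ are dualizable then $p^*F\otimes_{\mathcal{O}_W} q^*G$ is dualizable (pullbacks and tensor products preserve dualizable objects). Second, factoring $f$ as $W_0\to W_1 := W\times_Y Y_0\to W$ and using base change and the projection formula for affine morphisms, one computes $f_*\mathcal{O}_{W_0}\cong p^*\mathcal{A}\otimes_{\mathcal{O}_W} q^*\mathcal{B}$, where $\mathcal{A} := (f_X)_*\mathcal{O}_{X_0}\in\QCoh(X)$ and $\mathcal{B} := (f_Y)_*\mathcal{O}_{Y_0}\in\QCoh(Y)$. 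Since $f_X$ is a flat morphism, $\mathcal{A}$ is flat over $\mathcal{O}_X$, and since $X$ is an Adams stack every flat quasi-coherent sheaf on it is a filtered colimit of dualizable ones, so $\mathcal{A} = \colim_\mu F_\mu$ with $F_\mu$ dualizable, and likewise $\mathcal{B} = \colim_\nu G_\nu$. As pullback and tensor product preserve filtered colimits, $f_*\mathcal{O}_{W_0} = \colim_{\mu,\nu}(p^*F_\mu\otimes q^*G_\nu)$ is a filtered colimit of dualizable sheaves on $W$; write it as $\colim_\lambda D_\lambda$.

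Now fix $M\in\QCoh(W)$ and a nonzero quotient $t\colon M\twoheadrightarrow T$. Since $f$ is faithfully flat, $f^*T\neq 0$; as $D$ is projective over itself and $f^*M\to f^*T$ is an epimorphism, there is a map $\mathcal{O}_{W_0}\to f^*M$ whose composite with $f^*(t)$ is nonzero. By the adjunction $f^*\dashv f_*$ this corresponds to $\bar\mu\colon\mathcal{O}_W\to f_*f^*M$ whose composite with $f_*f^*(t)$ is nonzero, and by the projection formula $f_*f^*M\cong (f_*\mathcal{O}_{W_0})\otimes_{\mathcal{O}_W} M = \colim_\lambda(D_\lambda\otimes M)$. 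Because $\mathcal{O}_W$ is dualizable it is finitely presentable in $\QCoh(W)$ (true on any algebraic stack), so $\bar\mu$ factors through some $D_\lambda\otimes M$, and comparing with the analogous factorization over $T$ shows that the induced composite $\mathcal{O}_W\to D_\lambda\otimes M\to D_\lambda\otimes T$ is still nonzero. Using the natural isomorphism $\Hom_W(\mathcal{O}_W, D_\lambda\otimes M)\cong\Hom_W(\ldual{D_\lambda}, M)$ we obtain a morphism $\ldual{D_\lambda}\to M$ out of a dualizable sheaf whose composite with $t$ is nonzero. As $M$ and $t$ were arbitrary this shows that the dualizable sheaves form a generating family, i.e.\ the canonical map from a direct sum of dualizable sheaves to $M$ is an epimorphism for every $M$, so $W$ has the strong resolution property; together with the first part, $W$ is an Adams stack.

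I expect the main obstacle to be the appearance of the non-dualizable sheaf $f_*\mathcal{O}_{W_0}$ in the faithfully flat descent: a naive attempt to cover $M$ directly fails because subobjects of sums of dualizable sheaves need not be covered by dualizable sheaves. The key is to recognize $f_*\mathcal{O}_{W_0}$ as a filtered colimit of dualizable sheaves, using flatness together with the Adams hypothesis on $X$ and $Y$, so that finite presentability of the unit object forces the needed factorization; some care is also needed in the last step to ensure the adjunction and colimit arguments genuinely produce a morphism out of a dualizable object rather than merely a morphism into a tensor twist of $M$.
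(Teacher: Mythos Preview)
Your approach differs from the paper's and is essentially correct, but one step is not justified as stated.

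The paper proceeds in two stages. First it shows that the \emph{binary} product $X\times Y$ of Adams stacks is Adams by working with Hopf algebroids: if $\Gamma=\colim_i\Gamma_i$ and $\Sigma=\colim_j\Sigma_j$ with the $\Gamma_i,\Sigma_j$ dualizable, then $\Gamma\otimes_R\Sigma=\colim_{i,j}\Gamma_i\otimes_R\Sigma_j$. It then observes that $X\times_Z Y\to X\times Y$ is affine (a pullback of the affine diagonal of $Z$) and applies Thomason's trick via the \emph{counit}: for an affine morphism $f$ into an Adams stack the counit $f^*f_*\Rightarrow\id$ is an epimorphism, so pulling back a dualizable resolution of $f_*M$ gives one of $M$. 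You instead work directly with the faithfully flat affine cover $f\colon W_0\to W$ and argue on the \emph{unit} side: the projection formula gives $f_*f^*M\cong f_*\mathcal{O}_{W_0}\otimes M$, you write $f_*\mathcal{O}_{W_0}$ as a filtered colimit of dualizables, and use finite presentability of $\mathcal{O}_W$ to extract a dualizable mapping nontrivially to $M$. In effect your computation $f_*\mathcal{O}_{W_0}\cong p^*\mathcal{A}\otimes q^*\mathcal{B}$, with both tensor factors filtered colimits of dualizables, verifies the Adams condition for a Hopf algebroid presenting $W$ in one shot, bypassing the detour through $X\times Y$.

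The gap is the sentence ``since $X$ is an Adams stack every flat quasi-coherent sheaf on it is a filtered colimit of dualizable ones.'' This Lazard-type statement does not follow formally from the strong resolution property and is not proved in the paper; you cannot invoke it without argument. The fix is easy and keeps your proof intact: do not choose the covers $X_0,Y_0$ arbitrarily, but take $X_0=\Spec A$ and $Y_0=\Spec B$ coming from Adams Hopf algebroids $(A,\Gamma)$ and $(B,\Sigma)$ presenting $X$ and $Y$ (these exist by Theorem~\ref{thm:adams_characterization}). Under the equivalence $\QCoh(X)\simeq\Comod(A,\Gamma)$ the functor $f_X^{*}$ is the forgetful functor, so its right adjoint $(f_X)_*$ is the cofree comodule functor $M\mapsto\Gamma\otimes_A M$; hence $\mathcal{A}=(f_X)_*\mathcal{O}_{X_0}\cong\Gamma$, and the \emph{definition} of Adams Hopf algebroid says precisely that this is a filtered colimit of dualizable comodules. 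The same applies to $\mathcal{B}$. With this choice of covers your remaining steps (base change and projection formula for $f_*\mathcal{O}_{W_0}$, finite presentability of $\mathcal{O}_W$, and the duality adjunction $\Hom(\mathcal{O}_W,D_\lambda\otimes M)\cong\Hom(\ldual{D_\lambda},M)$) go through unchanged.
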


 Note that this theorem also gives new examples of schemes with the resolution property. In \cite[Theorem~5.2]{GROSS}, Gross has shown that all separated surfaces over an excellent base ring have the resolution property. In general, the classical criteria for the existence of locally free resolutions do not apply to finite products of such surfaces.

 To prove part~(iii), we have to show that the Kelly tensor product of two right exact symmetric monoidal categories is a bicategorical coproduct. This is a consequence of the purely categorical fact that tensor products of symmetric pseudomonoids are bicategorical coproducts (see Theorem~\ref{thm:pseudomonoid_coproduct}), which is a categorification of the fact that tensor products of commutative algebras are coproducts.

 In \S \ref{section:kelly_tensor} we conclude the proof of Theorem~\ref{thm:stack_product_to_coproduct} by showing that weakly Tannakian ind-abelian categories are closed under Kelly's tensor product.

 Throughout the paper we fix a commutative ring $R$. Most of the categories we consider are enriched in the category of $R$-modules. We frequently use basic concepts from the theory of enriched categories such as left Kan extensions and dense functors. These are discussed in detail in \cite[\S\S 4-5]{KELLY_BASIC}. A brief overview can also be found in \cite[\S 2]{SCHAEPPI_STACKS}. We recall the definition of Kelly's tensor product of finitely cocomplete categories in \S \ref{section:kelly_tensor_field}. This is a special case of the tensor product described in \cite[\S 6.5]{KELLY_BASIC}.



\section*{Acknowledgments}
 This paper was written at the University of Chicago as part of my graduate studies under the supervision of Prof.\ Peter May. I thank Peter May for our weekly discussions and for his help with the organization of this paper. 

 The contents of \S \ref{section:adams} were worked out at Macquarie University during a stay made possible by an Endeavour Research Fellowship from the Australian government. I thank Richard Garner, Steve Lack, and Ross Street for various helpful discussions during that time. I am particularly indebted to Richard Garner for his suggestion to use the regular Grothendieck topology to prove that ind-objects form an abelian category.



\section{Ind-abelian categories}\label{section:ind_abelian}

\subsection{Proof of Theorem~\ref{thm:ind_abelian}}
 We first show that $\ca{A}$ is ind-abelian if $\Ind(\ca{A})$ is abelian. Example~\ref{example:fg_modules} is a special case of this result, and the proof of the general fact follows the same outline. We first recall some basic facts about locally finitely presentable abelian categories.
 
 An object $C$ in a category $\ca{C}$ is called \emph{finitely presentable} if $\ca{C}(C,-)$ preserves filtered colimits. This means that any morphism from $C$ to a filtered colimit $\colim^{i \in \ca{I}} A_i$ factors through one of the structural morphisms $A_i \rightarrow \colim^{i \in \ca{I}} A_i$, and that for any two such factorizations
\[
 \xymatrix{C \ar[r] & A_i} \quad \text{and} \quad  \xymatrix{C \ar[r] & A_j}
\]
 there exists an object $k \in \ca{I}$ with morphisms $i \rightarrow k$ and $j \rightarrow k$ such that the diagram
\[
 \xymatrix{C \ar[r] \ar[d] & A_i \ar[d] \\ A_j \ar[r] & A_k}
\]
 commutes.

 The object $C$ is called \emph{finitely generated} if $\ca{C}(C,-)$ preserves directed colimits of monomorphisms. A cocomplete category $\ca{C}$ is called \emph{locally finitely presentable} if every object is a filtered colimit of finitely presentable objects. In this case we write $\ca{C}_{\fp}$ for the full subcategory of finitely presentable objects. Locally finitely presentable categories were introduced by Gabriel and Ulmer \cite{GABRIEL_ULMER}.

 The category $\ca{C}$ is called \emph{locally finitely generated} if every object is a directed union of finitely generated objects. If $\ca{C}$ is a locally finitely presentable category, then the finitely generated objects are precisely the regular quotients of the finitely presentable objects. If $\ca{C}$ is also abelian we can therefore use image factorizations to show that $\ca{C}$ is also locally finitely generated. We give a proof of the following standard lemma to provide an example for how these definitions are used.
 
 \begin{lemma}\label{lemma:finitely_generated}
 Let $\ca{C}$ be a locally finitely presentable abelian category. If the objects $B$ and $C$ in the exact sequence
 \[
 \xymatrix{0 \ar[r] & A \ar[r] & B \ar[r] & C \ar[r] & 0}
 \]
 are finitely presentable, then $A$ is finitely generated. 
\end{lemma}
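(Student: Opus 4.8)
The plan is to use that $\ca{C}$, being locally finitely presentable and abelian, is locally finitely generated, so that $A$ can be written as the directed union $A = \colim_{i \in I} A_i$ of its finitely generated subobjects, and then to show that this union stabilizes. For each $i$ choose an epimorphism $P_i \twoheadrightarrow A_i$ with $P_i$ finitely presentable; composing with the inclusion $A_i \hookrightarrow B$ and taking cokernels shows that $B/A_i \cong \mathrm{coker}(P_i \to B)$ is a finite colimit of finitely presentable objects, hence finitely presentable. Since cokernels commute with all colimits, passing to the colimit of the short exact sequences $0 \to A_i \to B \to B/A_i \to 0$ gives $C = B/A \cong \colim_{i \in I} B/A_i$, a filtered colimit in which the transition maps $B/A_i \to B/A_j$ and the structural maps $q_i \colon B/A_i \to C$ are epimorphisms.

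Now the hypothesis that $C$ is finitely presentable enters: the identity of $C$ factors through some structural map $q_{i_0}$, so $C$ is a retract of $B/A_{i_0}$; consequently $\ker q_{i_0}$ is a direct summand of the finitely presentable object $B/A_{i_0}$ and hence itself finitely presentable. But $q_{i_0}$ is the canonical epimorphism $B/A_{i_0} \to B/A$, so $\ker q_{i_0} \cong A/A_{i_0}$. Finally, restricting the directed union $A = \colim A_i$ to the cofinal set of indices $j \geq i_0$ and dividing by $A_{i_0}$ exhibits $A/A_{i_0}$ as the directed union of its subobjects $A_j/A_{i_0}$; since $A/A_{i_0}$ is finitely presentable, its identity factors through one of these, and as the structural map $A_j/A_{i_0} \to A/A_{i_0}$ is a monomorphism this forces $A_j = A$ for some $j$, so that $A = A_j$ is finitely generated.

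The inputs I rely on, besides the facts recalled in the text, are standard properties of locally finitely presentable categories: that finitely presentable objects are closed under finite colimits and under retracts, and that cokernels commute with filtered colimits. The step that does the real work is the retract argument in the second paragraph: it is where finite presentability of $C$ is used, and its point is precisely to convert the a priori weaker statement ``$A/A_{i_0}$ is finitely presentable'' into ``$A$ is finitely generated'' via the identification $\ker q_{i_0} \cong A/A_{i_0}$. I do not anticipate any genuine obstacle beyond keeping the two directed systems $(A_i)_{i \in I}$ and $(B/A_i)_{i \in I}$ straight.
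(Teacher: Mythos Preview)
Your argument is correct and shares the paper's setup: write $A$ as the directed union of its finitely generated subobjects $A_i$, note that $C\cong\colim_i B/A_i$, and use finite presentability of $C$ to split some $q_{i_0}\colon B/A_{i_0}\to C$. The endgames differ. You observe that $A/A_{i_0}\cong\ker q_{i_0}$ is a direct summand of the finitely presentable object $B/A_{i_0}$, hence itself finitely presentable, and then run a second stabilization on the directed union $A/A_{i_0}=\colim_{j\geq i_0}A_j/A_{i_0}$. The paper instead uses finite presentability of $B$ directly: the quotient $B\to B/A_{i_0}$ and the composite $B\to C\xrightarrow{s}B/A_{i_0}$ have the same image in $C=\colim_j B/A_j$, so they agree after passing to some $B/A_j$; restricting to $A\subseteq B$ then shows that $A\to B/A_j$ is zero, i.e.\ $A=A_j$. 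The paper's route is one step shorter; yours has the small advantage of making explicit where finite presentability of $B$ is used (namely, in ensuring that $B/A_{i_0}$ is finitely presentable).
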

 
\begin{proof}
 Write $A$ as a union of finitely generated subobjects $A_i$. Then $C$ is the directed colimit of the cokernels $B \slash A_i$. Since $B$ is finitely presentable, there exists a factorization
\[
 \xymatrix{ C \ar[r]^-{s} & B \slash A_i \ar[r] & C }
\]
 of the identity on $C$. By construction the two composites
\[
 \xymatrix{A \ar[r] & C \ar[r]^-{s} & B \slash A_i \ar[r] & C }
\quad \text{and} \quad
\xymatrix{A \ar[r] & B \slash A_i \ar[r] & C}
\]
 are equal. Local presentability of $A$ implies that there exists an index $j$ such that the diagram
\[
 \xymatrix{A \ar[rr] \ar[d] && B \slash A_i \ar[d] \\ C \ar[r]^-{s} & B\slash A_i  \ar[r] & B \slash A_j}
\]
 commutes. From this we deduce that $A$ is a subobject of $A_j$, hence that $A=A_j$.
\end{proof}

\begin{prop}\label{prop:fp_objects_ind_abelian}
 Let $\ca{C}$ be a finitely presentable abelian $R$-linear category. Then $\ca{C}_{\fp}$ is ind-abelian.
\end{prop}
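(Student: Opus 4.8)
The plan is to check conditions~(i) and~(ii) of Definition~\ref{dfn:ind_abelian} for $\ca{A} = \ca{C}_{\fp}$, running the argument of Example~\ref{example:fg_modules} but replacing the computations with elements by the structural facts about locally finitely presentable abelian categories recalled above. I would first record three preliminaries. Since finite direct sums and cokernels of finitely presentable objects are again finitely presentable, $\ca{C}_{\fp}$ is closed under finite colimits in $\ca{C}$; in particular it is finitely cocomplete and $R$-linear, and the inclusion $\ca{C}_{\fp} \hookrightarrow \ca{C}$ preserves and reflects finite colimits. Next, a morphism $f$ between finitely presentable objects is an epimorphism in $\ca{C}_{\fp}$ if and only if it is an epimorphism in $\ca{C}$: the nonobvious direction holds because the cokernel of $f$ formed in $\ca{C}$ is again finitely presentable, so if $f$ is epic in $\ca{C}_{\fp}$ this cokernel must vanish. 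Finally, every finitely presentable object is finitely generated, a directed colimit of monomorphisms being in particular a filtered colimit.

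For condition~(i), let $p\colon A \to V$ be an epimorphism of $\ca{C}_{\fp}$; by the above it is an epimorphism of the abelian category $\ca{C}$, with kernel $K$ fitting into an exact sequence $0 \to K \to A \to V \to 0$ in which $A$ and $V$ are finitely presentable. Lemma~\ref{lemma:finitely_generated} then gives that $K$ is finitely generated. Since finitely generated objects of a locally finitely presentable category are regular quotients of finitely presentable ones, there is a finitely presentable object $W$ and an epimorphism $W \to K$; writing $g\colon W \to A$ for the composite $W \to K \hookrightarrow A$, a morphism of $\ca{C}_{\fp}$, we find $\operatorname{coker}(g) \cong V$ in $\ca{C}$ with cokernel projection $p$, and since $W$, $A$, $V$ are finitely presentable this is also a cokernel in $\ca{C}_{\fp}$. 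Hence $p$ is regular.

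For condition~(ii), right exactness of the bottom row says exactly that $q = \operatorname{coker}(p)$, and the hypothesis $q f = 0$ then forces $f\colon A \to V$ to factor through $I \defl \operatorname{im}(p) = \ker(q)$, say $f = \iota \bar{f}$ with $\iota\colon I \to V$ the inclusion; likewise $p = \iota \bar{p}$ with $\bar{p}\colon W \to I$ an epimorphism. I would then form the pullback $P \defl A \times_I W$ in $\ca{C}$; since $\bar{p}$ is epic and $\ca{C}$ is abelian, the projection $\pi_A\colon P \to A$ is epic. Presenting $P$ as a filtered colimit of finitely presentable objects $B_i$, the images of the composites $B_i \to P \xrightarrow{\pi_A} A$ form a directed family of subobjects of $A$ with union $A$; as $A$ is finitely presentable, hence finitely generated, the identity of $A$ factors through one of them. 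This yields a finitely presentable object $B$ and a morphism $b\colon B \to P$ with $\pi_A b$ epic, and then $p' \defl \pi_A b\colon B \to A$ and $f' \defl \pi_W b\colon B \to W$ complete the diagram: $p'$ is an epimorphism, and $p f' = \iota \bar{p} \pi_W b = \iota \bar{f} \pi_A b = f p'$ by the defining square of the pullback.

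I do not expect any of the steps to be a real obstacle, since they assemble entirely standard facts: stability of finitely presentable objects under cokernels, the behaviour of images and filtered colimits in an abelian category, and the identification of finitely generated objects with regular quotients of finitely presentable ones. The one point deserving attention is the subtlety already visible in Example~\ref{example:fg_modules}, namely that the pullback $P$ need not be finitely presentable, so one cannot take $B = P$; instead one approximates $P$ by finitely presentable objects and uses finite generation of $A$ to obtain a finitely presentable $B$ surjecting onto it. Systematizing that ad hoc choice of generators is the step carrying the weight of the argument.
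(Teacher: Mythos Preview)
Your proof is correct and follows essentially the same route as the paper's: both verify condition~(i) by applying Lemma~\ref{lemma:finitely_generated} to the kernel of an epimorphism and covering it by a finitely presentable object, and both verify condition~(ii) by forming the pullback $P$ along the factorisation of $f$ through $\ker(q)$, then approximating $P$ to extract a finitely presentable $B$ surjecting onto $A$. The only cosmetic difference is in that last approximation step: the paper writes $P$ as a directed union of finitely generated subobjects and then covers the relevant one by a finitely presentable object, whereas you write $P$ directly as a filtered colimit of finitely presentable objects and pass to images inside $A$; both rely on the same fact that directed unions of subobjects are detected by finitely generated objects. Your preliminary remark that epimorphisms in $\ca{C}_{\fp}$ coincide with those in $\ca{C}$ is a point the paper leaves implicit.
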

 
\begin{proof}
 Let $p \colon B \rightarrow C$ be an epimorphism in $\ca{C}_{\fp}$, and let $K \in \ca{C}$ be the kernel of $p$. From Lemma~\ref{lemma:finitely_generated} it follows that $K$ is finitely generated. Thus there exists an epimorphism $A \rightarrow K$ where $A \in \ca{C}_{\fp}$. It follows that the sequence
  \[
 \xymatrix{ A \ar[r] & B \ar[r]^{p} & C \ar[r] & 0}    
  \]
  in $\ca{C}_{\fp}$ is right exact, hence that $p$ is a regular epimorphism.
  
 Now consider a solid arrow diagram in $\ca{C}_{\fp}$ as in part (ii) of Definition~\ref{dfn:ind_abelian}. Let $k \colon K \rightarrow V $ be the kernel of $q$ in $\ca{C}$. Then there exists a unique morphism $g \colon A \rightarrow K$ such that $kg=f$. Let
  \[
  \xymatrix{P \ar@{->>}[r]^{p^{\prime}} \ar[d] & A \ar[d]^g \\ W \ar@{->>}[r] & K}
  \]
 be a pullback square in $\ca{C}$. In general, $P$ itself need not be finitely generated, but we can always find a finitely generated subobject $P_0$ of $P$ such that the composite
  \[
  \xymatrix{P_0 \ar[r] & P \ar@{->>}[r]^-{p^{\prime}} & A}
  \]
 is still an epimorphism. Indeed, this follows from the fact that $P$ is the directed union of its finitely generated subobjects and the existence of image factorizations. We obtain the desired square of dotted arrows in condition~(ii) of Definition~\ref{dfn:ind_abelian} by choosing an epimorphism $B \rightarrow P_0$ such that $B$ is finitely presentable.
\end{proof}

 By applying the above proposition to the case $\ca{C}=\Ind(\ca{A})$ we can already prove one half of Theorem~\ref{thm:ind_abelian}, namely that $\ca{A}$ is ind-abelian if $\Ind(\ca{A})$ is abelian. Showing the converse is a bit more involved. We use the identification
 \[
  \Ind(\ca{A}) \simeq \Lex[\ca{A}^{\op},\Mod_R]
 \]
 between the category of ind-objects of a finitely cocomplete $R$-linear category $\ca{A}$ and the category of left exact $R$-linear functors $\ca{A}^{\op} \rightarrow \Mod_R$. The latter sits naturally in the category of all $R$-linear presheaves on $\ca{A}$. Our strategy is to show that every ind-abelian category has a Grothendieck topology whose sheaves are precisely the left exact presheaves. Since the associated sheaf functor is left exact, it will follow that $\Lex[\ca{A}^{\op},\Mod_R]$ is abelian. We present the argument in slightly greater generality, which we will use in Section~\ref{section:kelly_tensor_field} to show that ind-abelian categories over fields are closed under the Kelly tensor product.

\begin{dfn}\label{dfn:inductive_class}
 Let $\ca{A}$ be a finitely cocomplete $R$-linear category. If $\Sigma$ is a class of right exact sequences 
\[
 \xymatrix{W \ar[r]^{p} & V \ar[r]^{q} & U \ar[r] & 0}
 \]
 in $\ca{A}$, we write $R(\Sigma)$ for the class of all the morphisms $q$ which appear in an exact sequence in $\Sigma$ as above.
 
 The class $\Sigma$ is called an \emph{ind-class} if it has the following two properties:
 \begin{enumerate}
 \item[(i)]
 For all $q \in R(\Sigma)$, there is a right exact sequence
 \[
 \xymatrix{V \ar[r]^{p} & U \ar[r] & Z \ar[r]^{\cong} & 0} 
 \]
 which lies in $\Sigma$.
 \item[(ii)] 
 For all exact sequences
 \[
 \xymatrix{W \ar[r]^{p} & V \ar[r]^{q} & U \ar[r] & 0}
 \]
 in $\Sigma$ and all morphisms $f \colon A \rightarrow V$ with $qf=0$, there exists a morphism $p^{\prime} \in R(\Sigma)$ and a morphism $f^{\prime} \colon B \rightarrow W$ in $\ca{A}$ such that the diagram
 \[
 \xymatrix{ B \ar@{->>}[r]^{p^{\prime}} \ar[d]_{f^{\prime}} & A \ar[d]^{f} \ar[rd]^{0} \\ W \ar[r]_{p} & V \ar@{->>}[r]_{q} & U}
 \]
 is commutative.
 \end{enumerate}
\end{dfn}

\begin{example}
 If $\ca{A}$ is an ind-abelian category, then the class of \emph{all} right exact sequences is an ind-class. For this class, $R(\Sigma)$ consists of the epimorphisms in $\ca{A}$.
\end{example}

 A \emph{singleton coverage} (or \emph{singleton Grothendieck pretopology}) on an $R$-linear category $\ca{A}$ is a class of morphisms $\Xi$ such that for all solid arrow diagrams
 \[
 \xymatrix{V^{\prime} \ar@{..>}[r]^{q^{\prime}} \ar@{..>}[d]_{f^{\prime}} & U^{\prime} \ar[d]^{f} \\ V \ar[r]_{q} & U}
 \]
 with $q \in \Xi$, there exist dotted arrows $q^{\prime} \in \Xi$ and $f^{\prime}$ such that the above diagram commutes. An $R$-linear functor $F \colon \ca{A}^{\op} \rightarrow \Mod_R$ is called a \emph{sheaf} for the coverage $\Xi$ if the following two conditions are satisfied:
 \begin{enumerate}
 \item[(i)] The functor $F$ sends morphisms in $\Xi$ to monomorphisms;
 \item[(ii)]  For all $q \colon V \rightarrow U \in \Xi$, an element $x \in F(V)$ lies in the image of $F(q)$ if and only if $F(f)(x)=0$ for all $f \colon A \rightarrow V$ with $qf=0$.
 \end{enumerate}

\begin{lemma}\label{lemma:inductive_coverage}
 Let $\Sigma$ be an ind-class in the finitely cocomplete $R$-linear category $\ca{A}$. Then $R(\Sigma)$ is a singleton coverage.
\end{lemma}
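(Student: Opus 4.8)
The plan is to verify the defining lifting property of a singleton coverage directly from the two conditions defining an ind-class, the crucial point being that no pullback has to be built — which matters, since $\ca{A}$ is only assumed finitely cocomplete.

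Let $q\colon V\to U$ lie in $R(\Sigma)$ and let $f\colon U'\to U$ be arbitrary; I must produce a morphism $q'\colon V'\to U'$ in $R(\Sigma)$ together with a morphism $f'\colon V'\to V$ satisfying $qf'=fq'$. The first step is to apply condition~(i) of the ind-class to $q$, obtaining a right exact sequence
\[
\xymatrix{V \ar[r]^{q} & U \ar[r] & 0 \ar[r] & 0}
\]
in $\Sigma$ (the cokernel term vanishes because $q$, being in $R(\Sigma)$, is an epimorphism). In effect this exhibits $q$ itself as the left-hand map of a sequence in $\Sigma$ whose cokernel object is zero.

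Now I apply condition~(ii) of the ind-class to this sequence, with the test morphism into its middle object $U$ taken to be $f\colon U'\to U$. The hypothesis that condition~(ii) requires — that the composite of $f$ with the cokernel map $U\to 0$ be zero — holds automatically, since every morphism with zero codomain is zero. Condition~(ii) therefore yields a morphism $q'\colon V'\to U'$ in $R(\Sigma)$ and a morphism $f'\colon V'\to V$ with $qf'=fq'$. As $q'$ lies in $R(\Sigma)$ and the resulting square commutes, these are exactly the arrows demanded in the definition of a singleton coverage, so $R(\Sigma)$ is one.

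The one place where a moment's thought is needed is the choice of sequence to feed into condition~(ii). It is tempting to form the pullback of $q$ along $f$ and cover it by an object of $\ca{A}$, as in Example~\ref{example:fg_modules} and Proposition~\ref{prop:fp_objects_ind_abelian}; but $\ca{A}$ need not have that pullback, and applying condition~(ii) to the original sequence $W\to V\to U\to 0$ never brings $f$ into the picture. The resolution — and the reason condition~(i) is among the axioms — is that a sequence in $\Sigma$ with vanishing cokernel object makes the constraint in condition~(ii) vacuous, so that condition~(ii) specializes to precisely the lifting property of a coverage. I anticipate no difficulty beyond this.
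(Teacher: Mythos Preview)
Your proof is correct and follows exactly the paper's approach: apply condition~(ii) of the ind-class definition to the exact sequence supplied by condition~(i), so that the vanishing-composite hypothesis becomes vacuous and condition~(ii) specializes to the coverage lifting property. Your additional commentary explaining why one must pass to the sequence from condition~(i) rather than the original sequence defining $q$ is a helpful elaboration of what the paper compresses into a single sentence.
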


\begin{proof}
 The defining condition for a coverage is obtained by applying condition~(ii) of Definition~\ref{dfn:inductive_class} to the exact sequence of condition~(i).
\end{proof}

\begin{prop}\label{prop:sheaf_iff_exact}
 Let $\ca{A}$ be a finitely cocomplete $R$-linear category and let $\Sigma$ be an ind-class in $\ca{A}$. Then $F \colon \ca{A}^{\op} \rightarrow \Mod_R$ is a sheaf for the coverage $R(\Sigma)$ if and only if $F$ preserves all the exact sequences in $\Sigma$.
\end{prop}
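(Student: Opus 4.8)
The plan is to unwind the two definitions and match each of the two sheaf conditions against each of the two ``left exactness'' conditions; the only substantive input is ind-class condition~(ii) of Definition~\ref{dfn:inductive_class}, used together with the fact that a sheaf sends $R(\Sigma)$-morphisms to monomorphisms. Recall that ``$F$ preserves the exact sequences in $\Sigma$'' means that for every $W \xrightarrow{p} V \xrightarrow{q} U \to 0$ in $\Sigma$ the sequence $0 \to F(U) \xrightarrow{F(q)} F(V) \xrightarrow{F(p)} F(W)$ is exact, i.e.\ $F(q)$ is a monomorphism and $\operatorname{im} F(q) = \ker F(p)$.

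For the ``only if'' direction, assume $F$ preserves the sequences in $\Sigma$ and let $q \colon V \to U$ lie in $R(\Sigma)$, witnessed by a sequence $W \xrightarrow{p} V \xrightarrow{q} U \to 0$ in $\Sigma$. Then $F(q)$ is a monomorphism, which is condition~(i) of the sheaf definition. For condition~(ii), the implication ``$x \in \operatorname{im} F(q) \Rightarrow F(f)(x) = 0$ whenever $qf = 0$'' is pure functoriality, since $F(f)F(q) = F(qf) = 0$; conversely, if $F(f)(x) = 0$ for all $f$ with $qf = 0$, then taking $f = p$ (legitimate since $qp = 0$) gives $F(p)(x) = 0$, hence $x \in \ker F(p) = \operatorname{im} F(q)$ by exactness.

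For the ``if'' direction, assume $F$ is a sheaf for $R(\Sigma)$ and let $W \xrightarrow{p} V \xrightarrow{q} U \to 0$ be a sequence in $\Sigma$. Then $q \in R(\Sigma)$, so $F(q)$ is a monomorphism by sheaf condition~(i). Since $qp = 0$ we get $F(p)F(q) = 0$, so $\operatorname{im} F(q) \subseteq \ker F(p)$; it remains to prove the reverse inclusion. Let $x \in F(V)$ with $F(p)(x) = 0$. By sheaf condition~(ii) it suffices to show $F(f)(x) = 0$ for every $f \colon A \to V$ with $qf = 0$. Given such an $f$, ind-class condition~(ii) supplies $p^{\prime} \in R(\Sigma)$ and $f^{\prime} \colon B \to W$ with $f p^{\prime} = p f^{\prime}$; applying $F$ yields $F(p^{\prime})\bigl(F(f)(x)\bigr) = F(f^{\prime})\bigl(F(p)(x)\bigr) = 0$, and since $p^{\prime} \in R(\Sigma)$ the map $F(p^{\prime})$ is a monomorphism by sheaf condition~(i), so $F(f)(x) = 0$ as needed.

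The one genuinely delicate point is the inclusion $\ker F(p) \subseteq \operatorname{im} F(q)$ in the last paragraph: because $\ca{A}$ is only finitely cocomplete, a morphism $f$ with $qf = 0$ need not factor through $p$, so the naive argument available in an abelian category fails. This is exactly the gap that ind-class condition~(ii) is designed to fill, by covering $A$ with a $p^{\prime} \in R(\Sigma)$ through which the relevant factorization does exist; combining this with the injectivity of $F$ on $R(\Sigma)$-morphisms closes the argument.
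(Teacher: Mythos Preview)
Your proof is correct, though you have swapped the labels: what you call the ``only if'' direction (assuming $F$ preserves $\Sigma$-sequences) is actually the ``if'' direction of the stated biconditional, and vice versa.

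The argument differs from the paper's mainly in packaging. For the direction ``$F$ preserves $\Sigma$ $\Rightarrow$ $F$ is a sheaf'' (the paper's Lemma~\ref{lemma:exact_implies_sheaf}), the paper establishes the full equivalence $F(p)(x)=0 \Leftrightarrow \bigl(\forall f,\, qf=0 \Rightarrow F(f)(x)=0\bigr)$ and invokes ind-class condition~(ii) for the forward implication; you observe, correctly, that only the trivial backward implication (take $f=p$) is needed here, so ind-class~(ii) is not required for this direction at all. For the converse, the paper routes through an intermediate result (Lemma~\ref{lemma:yoneda_exact}) showing that the Yoneda embedding sends $\Sigma$-sequences to exact sequences in $\Sh(\ca{A})$, and then applies the left-exact functor $\Sh(\ca{A})(-,F)$; you instead argue directly at the level of elements. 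If one unwinds the paper's categorical argument, it reduces to precisely your computation---ind-class condition~(ii) enters at the same point, to show that $F(p)(x)=0$ forces $F(f)(x)=0$ whenever $qf=0$. Your route is more elementary and self-contained; the paper's has the side benefit of isolating Lemma~\ref{lemma:yoneda_exact} as a statement of independent interest.
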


 Before proving this proposition we show how it implies that $\Ind(\ca{A})$ is abelian if $\ca{A}$ is ind-abelian.
 
\begin{proof}[Proof of Theorem~\ref{thm:ind_abelian}]
 Let $\ca{A}$ be a finitely cocomplete $R$-linear category. First suppose that $\Ind(\ca{A})$ is abelian. Since $\Ind(\ca{A})$ is always locally finitely presentable, with $\Ind(\ca{A})_{\fp}=\ca{A}$, Proposition~\ref{prop:fp_objects_ind_abelian} implies that $\ca{A}$ is ind-abelian.
 
 Conversely, suppose that $\ca{A}$ is ind-abelian. The class $\Sigma$ of \emph{all} right exact sequences is an ind-class by definition of ind-abelian categories (see Definitions~\ref{dfn:ind_abelian} and \ref{dfn:inductive_class}). By Proposition~\ref{prop:sheaf_iff_exact}, left exact presheaves are precisely the sheaves for the coverage $R(\Sigma)$ of (regular) epimorphisms. Since the associated sheaf functor is left exact it follows that $\Ind(\ca{A}) \simeq \Lex[\ca{A}^{\op},\Mod_R]$ is a lex-reflective subcategory of the abelian category $[\ca{A}^{\op},\Mod_R]$ of $R$-linear presheaves. It follows that $\Ind(\ca{A})$ is abelian.
\end{proof}

 To prove Proposition~\ref{prop:sheaf_iff_exact} we use the following lemmas.
 
 \begin{lemma}\label{lemma:exact_implies_sheaf}
 If $F$ preserves exact sequences in $\Sigma$, then $F$ is a sheaf for the singleton coverage $R(\Sigma)$. In particular, representable functors on $\ca{A}$ are sheaves for $R(\Sigma)$.
 \end{lemma}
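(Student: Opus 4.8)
The plan is to verify, directly from the hypothesis, the two conditions in the definition of a sheaf for the singleton coverage $R(\Sigma)$. Throughout I read the statement ``$F$ preserves the right exact sequence $W \xrightarrow{p} V \xrightarrow{q} U \to 0$'' in the only way available for an $R$-linear functor $F \colon \ca{A}^{\op} \to \Mod_R$: the sequence $0 \to F(U) \xrightarrow{F(q)} F(V) \xrightarrow{F(p)} F(W)$ is exact.

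First I would check condition~(i) of the sheaf definition. If $q \in R(\Sigma)$, then by the very definition of $R(\Sigma)$ there is a right exact sequence $W \xrightarrow{p} V \xrightarrow{q} U \to 0$ in $\Sigma$ in which $q$ occurs as the second morphism; applying the hypothesis to this sequence shows that $F(q) \colon F(U) \to F(V)$ is a monomorphism.

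Next I would check condition~(ii). Fix $q \colon V \to U$ in $R(\Sigma)$, choose a right exact sequence $W \xrightarrow{p} V \xrightarrow{q} U \to 0$ in $\Sigma$ witnessing this, and let $x \in F(V)$. If $x = F(q)(y)$ for some $y \in F(U)$, then for every $f \colon A \to V$ with $qf = 0$, contravariant functoriality and $R$-linearity give $F(f)(x) = F(qf)(y) = 0$; this is the ``only if'' direction. For the converse, suppose $F(f)(x) = 0$ for all $f \colon A \to V$ with $qf = 0$. The point is that the single morphism $f = p \colon W \to V$ is already of this form, because $q$ is a cokernel of $p$ and hence $qp = 0$; the hypothesis therefore yields $F(p)(x) = 0$, and exactness of $0 \to F(U) \to F(V) \to F(W)$ identifies the image of $F(q)$ with the kernel of $F(p)$, so $x$ lies in the image of $F(q)$. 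This shows $F$ is a sheaf for $R(\Sigma)$.

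Finally, for the last assertion I would argue that every representable functor $\ca{A}(-,A) \colon \ca{A}^{\op} \to \Mod_R$ sends colimits of $\ca{A}$ to limits; in particular it sends a cokernel diagram to a kernel diagram, so it preserves every right exact sequence of $\ca{A}$, a fortiori those in $\Sigma$, and the first part applies. I do not expect any genuine obstacle: the argument is formal once the sheaf conditions are spelled out, and the only point that takes a moment's thought is that in the nontrivial direction of~(ii) the single test morphism $p$ coming from the chosen presentation already suffices, so that none of the ind-class axioms of Definition~\ref{dfn:inductive_class} are needed here (they will, of course, be needed for the reverse implication in Proposition~\ref{prop:sheaf_iff_exact}).
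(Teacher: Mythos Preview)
Your proof is correct, and in fact cleaner than the paper's at one point. Both arguments verify the two sheaf conditions directly; condition~(i) and the ``only if'' part of condition~(ii) are handled identically. The difference is in the ``if'' part of condition~(ii): you take the single test morphism $f=p$, obtain $F(p)(x)=0$, and conclude via $\ker F(p)=\operatorname{im} F(q)$. The paper instead rephrases the condition as the biconditional ``$F(p)(x)=0 \Leftrightarrow F(f)(x)=0$ for all $f$ with $qf=0$'' and, for the forward implication, invokes axiom~(ii) of the ind-class to factor an arbitrary such $f$ through $p$ up to a cover $p'\in R(\Sigma)$, then uses that $F(p')$ is a monomorphism. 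Your observation that this detour is unnecessary is right: once one knows $x=F(q)(y)$, the vanishing $F(f)(x)=F(qf)(y)=0$ is immediate, so the ind-class axioms play no role in this lemma. They are genuinely needed only for the reverse implication in Proposition~\ref{prop:sheaf_iff_exact} (via Lemma~\ref{lemma:yoneda_exact}).
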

 
 \begin{proof}
  Since $F(q)$ is the kernel of $F(p)$, it is a monomorphism, so it only remains to show that the second sheaf condition holds. The image of $F(q)$ coincides with the kernel of $F(p)$ by assumption, so it suffices to show that $F(p)(x)=0$ for $x \in F(V)$ if and only if $F(f)(x)=0$ for all $f \colon A \rightarrow V$ with $qf=0$. Since $qp=0$, one of these implications is obvious. To see the other, assume that $F(p)(x)=0$ and let $f \colon A \rightarrow V$ be such that $qf=0$. From the definition of ind-classes we find that there exist morphisms $p^{\prime} \in R(\Sigma)$ and $f^{\prime}$ such that the diagram
  \[
  \xymatrix{ B \ar@{->>}[r]^{p^{\prime}} \ar@{->}[d]_{f^{\prime}} & A \ar[d]^{f} \ar[rd]^{0} \\ W \ar[r]_{p} & V \ar@{->>}[r]_{q} & U} 
  \]
 commutes. Thus
 \[
 F(p^{\prime}) \circ F(f)(x)=F(f^{\prime})\circ F(p)(x)=0 \smash{\rlap{.}} 
 \]
 Since $p^{\prime}$ lies in $R(\Sigma)$ it follows that $F(p^{\prime})$ is a monomorphism, which in turn implies that $F(f)(x)=0$.
 \end{proof}

 \begin{lemma}\label{lemma:yoneda_exact}
 In the situation of Proposition~\ref{prop:sheaf_iff_exact}, let $\Sh(\ca{A})$ denote the category of sheaves for the singleton coverage $R(\Sigma)$, and let
 \[
  \xymatrix{W \ar[r]^{p} & V \ar[r]^{q} & U \ar[r] & 0}
 \]
 be an exact sequence in $\Sigma$. Then the sequence
 \[
  \xymatrix{\ca{A}(-,W) \ar[r]^-{\ca{A}(-,p)} & \ca{A}(-,V) \ar[r]^-{\ca{A}(-,q)} & \ca{A}(-,U) \ar[r] & 0} 
 \]
 in $\Sh(\ca{A})$ is exact.
 \end{lemma}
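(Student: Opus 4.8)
The claim is equivalent to the assertion that every sheaf $F$ for the coverage $R(\Sigma)$ carries the right exact sequence $W \xrightarrow{p} V \xrightarrow{q} U \to 0$ of $\Sigma$ to a left exact sequence $0 \to F(U) \xrightarrow{F(q)} F(V) \xrightarrow{F(p)} F(W)$ of $R$-modules, and the plan is to verify this assertion using the two sheaf conditions together with condition~(ii) of Definition~\ref{dfn:inductive_class}. For the equivalence, recall that $\Sh(\ca{A})$ is abelian and that representable presheaves on $\ca{A}$ are sheaves by Lemma~\ref{lemma:exact_implies_sheaf}. A sequence $\ca{A}(-,W) \to \ca{A}(-,V) \to \ca{A}(-,U) \to 0$ in an abelian category is exact if and only if $\ca{A}(-,q)$ is a cokernel of $\ca{A}(-,p)$; by the universal property of cokernels this holds if and only if
\[
 0 \to \Hom_{\Sh(\ca{A})}\bigl(\ca{A}(-,U),F\bigr) \to \Hom_{\Sh(\ca{A})}\bigl(\ca{A}(-,V),F\bigr) \to \Hom_{\Sh(\ca{A})}\bigl(\ca{A}(-,W),F\bigr)
\]
 is exact for every sheaf $F$, and the Yoneda lemma identifies this with $0 \to F(U) \xrightarrow{F(q)} F(V) \xrightarrow{F(p)} F(W)$.

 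To verify the assertion, fix a sheaf $F$. Since $q \in R(\Sigma)$, the first sheaf condition makes $F(q)$ a monomorphism. Right exactness of the sequence in $\Sigma$ gives $qp=0$, hence $F(p)F(q)=0$ and $\operatorname{im} F(q) \subseteq \ker F(p)$. For the reverse inclusion, take $x \in F(V)$ with $F(p)(x)=0$; by the second sheaf condition applied to $q$, it is enough to show $F(f)(x)=0$ for every $f \colon A \to V$ with $qf=0$. Given such an $f$, condition~(ii) of Definition~\ref{dfn:inductive_class}, applied to the sequence $W \xrightarrow{p} V \xrightarrow{q} U \to 0$ and the morphism $f$, yields a morphism $p^{\prime} \colon B \to A$ in $R(\Sigma)$ and a morphism $f^{\prime} \colon B \to W$ with $pf^{\prime} = fp^{\prime}$. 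Then
\[
 F(p^{\prime})\bigl(F(f)(x)\bigr) = F(fp^{\prime})(x) = F(pf^{\prime})(x) = F(f^{\prime})\bigl(F(p)(x)\bigr) = 0 \smash{\rlap{,}}
\]
 and since $p^{\prime} \in R(\Sigma)$ the first sheaf condition makes $F(p^{\prime})$ a monomorphism, so $F(f)(x)=0$, as needed.

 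The only step that needs care is the equivalence in the first paragraph: one must know that $\Sh(\ca{A})$ is abelian (a standard fact about categories of $\Mod_R$-valued sheaves on a site) so that the cokernel description of right exact sequences and the Yoneda computation of the $\Hom$-modules are available. Once this is granted, the rest is a direct translation: the second sheaf condition turns the statement that $x$ lies in $\operatorname{im} F(q)$ into the statement that $F(f)(x)=0$ for all $f$ with $qf=0$, which is precisely the situation in which condition~(ii) of the ind-class definition produces the covering morphism $p^{\prime}$ that finishes the argument.
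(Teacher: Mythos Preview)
Your proof is correct and uses the same core argument as the paper: both invoke condition~(ii) of Definition~\ref{dfn:inductive_class} to produce a covering $p^{\prime}\in R(\Sigma)$ and then use that $F(p^{\prime})$ is a monomorphism. The only difference is organizational: the paper works internally in $\Sh(\ca{A})$, showing directly that $\ca{A}(-,q)$ is an epimorphism and that $\ca{A}(-,p)$ surjects onto its kernel, whereas you externalize via Yoneda and the cokernel universal property to reduce to the statement that every sheaf $F$ sends the given sequence to a left exact one---which is precisely the implication the paper then derives \emph{from} this lemma in the proof of Proposition~\ref{prop:sheaf_iff_exact}; your worry about abelianness is mild, since you really only need that $\Sh(\ca{A})$ has cokernels (it is reflective in presheaves) for the reduction to go through.
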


\begin{proof}
 From the Yoneda lemma
 \[
\xymatrix{ \Sh(\ca{A}) \bigl(\ca{A}(-,A), F\bigr)  \ar[d]_{\cong} \ar[rrr]^{ \Sh(\ca{A})(\ca{A}(-,q), F) } &&& \Sh(\ca{A})\bigl(\ca{A}(-,V), F\bigr) \ar[d]^{\cong} \\ 
 FU \ar[rrr]^{F(q)} &&& FV }
 \]
 and the first sheaf condition it follows that $\ca{A}(-,q)$ is an epimorphism in the category $\Sh(\ca{A})$ for all $q$ in $R(\Sigma)$. It remains to show that $\ca{A}(-,p)$ induces an epimorphism onto the kernel $K$ of $\ca{A}(-,q)$. Let $h \colon K \rightarrow F$ be a morphism in $\Sh(\ca{A})$ such that the composite
 \[
  \xymatrix{ \ca{A}(-,W) \ar[r] & K \ar[r]^h & F}
 \]
 is zero. We have to show that $h=0$.
 
 Since the morphisms $\ca{A}(-,A) \rightarrow K$ where $A$ runs through all the objects of $\ca{A}$ are jointly epimorphic, it suffices to show that their composite with $h$ is always zero. By Yoneda there exists a morphism $f \colon A \rightarrow V$ such that the upper triangle in the solid arrow diagram
 \[
 \xymatrix{ \ca{A}(-,B) \ar@{..>}[d]_{\ca{A}(-,f^{\prime})} \ar@{..>>}[r]^{\ca{A}(-,p^{\prime})} & \ca{A}(-,A) \ar[d] \ar[rd]^{\ca{A}(-,f)} \\
  \ca{A}(-,W) \ar[rd]_{0} \ar[r] & K \ar[d]^{h} \ar[r] & \ca{A}(-,V) \ar[r]^{\ca{A}(-,q)} & \ca{A}(-,U) \\ & F}
 \]
 commutes. Since $\ca{A}(-,f)$ factors through the kernel $K$ of $\ca{A}(-,q)$, we know that $qf=0$. From the definition of ind-classes (see Definition~\ref{dfn:inductive_class}) it follows that we can choose $p^{\prime} \in R(\Sigma)$ and $f^{\prime}$ in $\ca{A}$ in place of the dotted arrows such that the above diagram is commutative. We have already observed that $\ca{A}(-,p^{\prime})$ is an epimorphism in the category $\Sh(\ca{A})$. Thus the composite $\ca{A}(-,A) \rightarrow F$ is indeed zero, as claimed.
\end{proof}

\begin{proof}[Proof of Proposition~\ref{prop:sheaf_iff_exact}]
Lemma~\ref{lemma:exact_implies_sheaf} implies that any $F \colon \ca{A}^{\op} \rightarrow \Mod_R $ which preserves exact sequences in $\Sigma$ is a sheaf for the singleton coverage $R(\Sigma)$. It remains to show the converse.

 Let $F$ be a sheaf for $R(\Sigma)$, and let
  \[
  \xymatrix{W \ar[r]^{p} & V \ar[r]^{q} & U \ar[r] & 0}
 \]
 be an exact sequence in $\Sigma$. By Lemma~\ref{lemma:yoneda_exact}, the sequence
 \[
  \xymatrix{\ca{A}(-,W) \ar[r]^-{\ca{A}(-,p)} & \ca{A}(-,V) \ar[r]^-{\ca{A}(-,q)} & \ca{A}(-,U) \ar[r] & 0} 
 \]
 in $\Sh(\ca{A})$ is exact as well. The contravariant functor $\Sh(\ca{A})(-,F)$ sends right exact sequences to left exact sequences. Thus the Yoneda lemma
 \[
  \xymatrix{0 \ar[d] \ar@{=}[r] & 0 \ar[d] \\
  \Sh(\ca{A})\bigl(\ca{A}(-,U),F\bigr) 
   \ar[d]_-{\Sh(\ca{A})(\ca{A}(-,q),F)} \ar[r]^-{\cong} & F(U) \ar[d]^{F(q)} \\ 
  \Sh(\ca{A})\bigl(\ca{A}(-,V),F\bigr)
   \ar[d]_-{\Sh(\ca{A})(\ca{A}(-,p),F)} \ar[r]^-{\cong} & F(V) \ar[d]^{F(p)} \\  
  \Sh(\ca{A})\bigl(\ca{A}(-,U),F\bigr) \ar[r]^-{\cong} & F(W) }  
 \]
 implies that $F$ preserves exact sequences in $\Sigma$.
\end{proof}

\subsection{Closing subcategories of \texorpdfstring{$R$}{R}-linear categories under colimits}
 Proposition~\ref{prop:sheaf_iff_exact} allows us to give a more explicit description of the Kelly tensor product of two ind-abelian categories over a field. In order to do that we need to recall the procedure of closing a subcategory under a class of colimits (see \cite[\S 3.5]{KELLY_BASIC}).
 
 Given a cocomplete $R$-linear category $\ca{C}$ and a full subcategory $\ca{A}$, the \emph{closure} of $\ca{A}$ under finite colimits is the union of the countable sequence
 \[
  \ca{A}=\ca{C}_0 \subseteq \ca{C}_{1} \subseteq \ca{C}_{2} \subseteq \ldots \smash{\rlap{,}}
 \]
 where $\ca{C}_{i+1}$ is the full subcategory of $\ca{C}$ whose objects are colimits of finite diagrams in $\ca{C}_{i}$. In general, this sequence need not stabilize after a finite number of steps, hence the resulting closure is often very difficult to describe explicitly. The following proposition provides a class of examples where the sequence terminates after two steps.
 
 \begin{prop}\label{prop:fp_in_lex_sheaves}
 Let $\Sigma$ be an ind-class, and let $\Lex_{\Sigma} [\ca{A}^{\op},\Mod_R]$ be the full subcategory of $R$-linear presheaves on $\ca{A}$ which send all the right exact sequences in $\Sigma$ to left exact sequences. Then the following conditions hold:
 \begin{enumerate}
 \item[(i)] The category $\Lex_{\Sigma} [\ca{A}^{\op},\Mod_R]$ is locally finitely presentable and abelian;
 \item[(ii)] For every finitely presentable object $F$ in $\Lex_{\Sigma} [\ca{A}^{\op},\Mod_R]$, there exists a right exact sequence
\[
 \bigoplus_{j=1}^m \ca{A}(-,B_j) \longrightarrow \bigoplus_{i=1}^n \ca{A}(-,A_i) \longrightarrow F \rightarrow 0 \smash{\rlap{;}}
\]
 \item[(iii)] The closure of $\ca{A}$ in $\Lex_{\Sigma} [\ca{A}^{\op},\Mod_R]$ under finite colimits coincides with the full subcategory of finitely presentable objects. It is in particular ind-abelian.
 \end{enumerate}
 \end{prop}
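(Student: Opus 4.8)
The plan is to treat the three statements in order, using part~(i) to set up the locally finitely presentable abelian ambient category in which the arguments for (ii) and (iii) are carried out. For part~(i) the starting point is Proposition~\ref{prop:sheaf_iff_exact}, which identifies $\Lex_{\Sigma}[\ca{A}^{\op},\Mod_R]$ with the category $\Sh(\ca{A})$ of sheaves for the singleton coverage $R(\Sigma)$ (a coverage by Lemma~\ref{lemma:inductive_coverage}). Since $\ca{A}$ is essentially small, sheafification is a left exact reflector of the Grothendieck abelian category $[\ca{A}^{\op},\Mod_R]$, so $\Sh(\ca{A})$ is abelian. For local finite presentability I would instead note that, for each right exact sequence $\sigma\colon W\xrightarrow{p}V\xrightarrow{q}U\to 0$ in $\Sigma$, a presheaf $F$ sends $\sigma$ to a left exact sequence precisely when it is orthogonal to the canonical morphism $\operatorname{coker}\bigl(\ca{A}(-,p)\bigr)\to\ca{A}(-,U)$ of finitely presentable presheaves; hence $\Lex_{\Sigma}[\ca{A}^{\op},\Mod_R]$ is a reflective localization of $[\ca{A}^{\op},\Mod_R]$ at a set of maps between finitely presentable objects and is therefore locally finitely presentable (cf.\ \cite{GABRIEL_ULMER}). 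It is moreover closed under filtered colimits in $[\ca{A}^{\op},\Mod_R]$, since these are computed pointwise and commute with the finite limits defining the subcategory. Two facts I will carry forward: the representables lie in $\Lex_{\Sigma}[\ca{A}^{\op},\Mod_R]$ by Lemma~\ref{lemma:exact_implies_sheaf} and are finitely presentable there, and every object of $\Lex_{\Sigma}[\ca{A}^{\op},\Mod_R]$ is a quotient of a coproduct of representables, obtained by applying the reflector to the tautological presheaf-level epimorphism.

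For part~(ii), let $F$ be finitely presentable in $\Lex_{\Sigma}[\ca{A}^{\op},\Mod_R]$. Write $F$ as a quotient of $\bigoplus_{i\in\mathcal{T}}\ca{A}(-,A_i)$; realizing this coproduct as the filtered colimit of its finite sub-coproducts and using exactness of filtered colimits in the Grothendieck abelian category $\Lex_{\Sigma}[\ca{A}^{\op},\Mod_R]$, the images of the finite sub-coproducts form a directed family of subobjects with union $F$. As $F$ is finitely generated, it is already a quotient of a finite sub-coproduct $\bigoplus_{i=1}^{n}\ca{A}(-,A_i)$. Let $N$ be the kernel of this epimorphism. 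Since $\bigoplus_{i=1}^{n}\ca{A}(-,A_i)$ and $F$ are finitely presentable, Lemma~\ref{lemma:finitely_generated} shows $N$ is finitely generated, and the same directed-union argument applied to an epimorphism onto $N$ from a coproduct of representables yields a finite sub-coproduct $\bigoplus_{j=1}^{m}\ca{A}(-,B_j)\twoheadrightarrow N$. Composing with the inclusion $N\hookrightarrow\bigoplus_{i=1}^{n}\ca{A}(-,A_i)$ gives the desired right exact sequence.

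Part~(iii) is then formal. Since representables are finitely presentable and finitely presentable objects are closed under finite colimits, the closure of $\ca{A}$ under finite colimits (the union of the chain $\ca{A}=\ca{C}_0\subseteq\ca{C}_1\subseteq\cdots$ of \cite[\S 3.5]{KELLY_BASIC}) is contained in the full subcategory of finitely presentable objects; conversely, by part~(ii) every finitely presentable object is a cokernel of a map between finite coproducts of representables, hence lies in $\ca{C}_2$, so the chain stabilizes at the second step and the closure is exactly $\Lex_{\Sigma}[\ca{A}^{\op},\Mod_R]_{\fp}$. Being the category of finitely presentable objects of a locally finitely presentable abelian $R$-linear category, it is ind-abelian by Proposition~\ref{prop:fp_objects_ind_abelian}.

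I expect the real work to be in part~(i): one has to use two descriptions of $\Lex_{\Sigma}[\ca{A}^{\op},\Mod_R]$ at once — as a sheaf category, to extract abelianness from the left exactness of sheafification, and as a finitary orthogonality class, to extract local finite presentability and closure under filtered colimits — and verify their compatibility. Once that is in place, parts (ii) and (iii) are routine.
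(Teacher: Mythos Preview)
Your proposal is correct and, for parts~(ii) and~(iii), essentially identical to the paper's proof. The one genuine difference is in part~(i). The paper does not invoke an orthogonality-class argument: it simply observes that filtered colimits in $\Lex_{\Sigma}[\ca{A}^{\op},\Mod_R]$ are computed as in $[\ca{A}^{\op},\Mod_R]$ (since the defining finite-limit conditions commute with filtered colimits), so the representables---already known to be sheaves by Lemma~\ref{lemma:exact_implies_sheaf}---remain finitely presentable and form a dense generator, which immediately gives local finite presentability. Your route via inverting the maps $\operatorname{coker}\bigl(\ca{A}(-,p)\bigr)\to\ca{A}(-,U)$ is also correct and has the mild virtue of packaging the result as a standard Gabriel--Ulmer localization statement; the paper's route is shorter and delivers directly the fact you need in part~(ii), namely that the representables themselves form a generator of finitely presentable objects. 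Your remark that parts~(ii) and~(iii) are routine once (i) is in place is exactly the paper's view as well.
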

 
\begin{proof}
 Filtered colimits in $\Lex_{\Sigma} [\ca{A}^{\op},\Mod_R]$ are computed as in $[\ca{A}^{\op},\Mod_R]$ because finite limits commute with filtered colimits. It follows that the representables form a dense generator of finitely presentable objects in $\Lex_{\Sigma} [\ca{A}^{\op},\Mod_R]$. Thus $\Lex_{\Sigma} [\ca{A}^{\op},\Mod_R]$ is locally finitely presentable. By Proposition~\ref{prop:sheaf_iff_exact}, 
 \[
 \Lex_{\Sigma}[\ca{A}^{\op},\Mod_R]=\Sh_{R(\Sigma)}(\ca{A}) \smash{\rlap{,}}
 \]
so it is lex-reflexive and therefore abelian. This proves (i).
 
 For every finitely presentable object $F$ there exists an epimorphism
 \[
 \bigoplus_{i=1}^n \ca{A}(-,A_i) \longrightarrow F
 \] 
 since the representable functors form a dense generator. Its kernel is finitely generated by Lemma~\ref{lemma:finitely_generated}. Thus it admits an epimorphism from a finitely presentable object. Applying the above argument again we get a right exact sequence
 \[
 \bigoplus_{j=1}^m \ca{A}(-,B_j) \longrightarrow \bigoplus_{i=1}^n \ca{A}(-,A_i) \longrightarrow F \rightarrow 0 \smash{\rlap{,}}
 \]
 which shows that (ii) holds.
 
 It remains to show (iii). By (ii), every finitely presentable object lies in the closure of the representable functors under finite colimits. Conversely, we already observed that every representable functor is finitely presentable, showing that $\ca{A}$ is contained in $\Lex_{\Sigma}[\ca{A}^{\op},\Mod_R]_{\fp}$. The conclusion follows since finitely presentable objects are closed under finite colimits.
\end{proof}

\subsection{Kelly's tensor product of finitely cocomplete categories}\label{section:kelly_tensor_field}
 Given two $R$-linear categories $\ca{A}$ and $\ca{B}$, their \emph{tensor product} $\ca{A} \otimes \ca{B}$ has objects the pairs $(A,B)$ with $A \in \ca{A}$ and $B\in \ca{B}$, and the $R$-module of homomorphisms from $(A,B)$ to $(A^{\prime},B^{\prime})$ is given by $\ca{A}(A,A^{\prime})\otimes \ca{B}(B,B^{\prime})$ (where the tensor product is taken over the base ring $R$). To give an $R$-linear functor $\ca{A}\otimes \ca{B} \rightarrow \ca{C}$ amounts to giving two families of $R$-linear functors $(\ca{A} \rightarrow \ca{C})_{B \in \ca{B}}$ and $(\ca{B}\rightarrow \ca{C})_{A \in \ca{A}}$, subject to certain natural compatibility conditions (see \cite[Diagram~(1.21)]{KELLY_BASIC} for details). Thus $\ca{A}\otimes \ca{B}$ is universal among ``bilinear functors.'' The usual adjunction formula for tensor products is also valid, that is, there is an isomorphism
 \[
 [\ca{A}\otimes \ca{B},\ca{C}] \cong  [\ca{A},[\ca{B},\ca{C}]]
 \]
 of $R$-linear categories (see \cite[\S 2.3]{KELLY_BASIC}).
 
 Even if $\ca{A}$ and $\ca{B}$ are finitely cocomplete, there is no reason why $\ca{A}\otimes \ca{B}$ would be cocomplete as well. However, Kelly has shown that there exists a finitely cocomplete $R$-linear category $\ca{A}\boxtimes \ca{B}$ with a similar universal property in the world of finitely cocomplete $R$-linear categories and right exact $R$-linear functors. Writing $\Rex[\ca{A},\ca{B}]$ for the category of right exact $R$-linear functors, we have
 \[
 \Rex[\ca{A} \boxtimes \ca{B}, \ca{C}] \simeq \Rex[\ca{A},\Rex[\ca{B},\ca{C}]]
 \]
 for all finitely complete $R$-linear categories $\ca{A}$, $\ca{B}$, $\ca{C}$ (see \cite[Formula~(6.24)]{KELLY_BASIC}). We call this tensor product the \emph{right exact tensor product}. Note that the universal property of $\ca{A} \boxtimes \ca{B}$ is weaker than the one of $\ca{A} \otimes \ca{B}$. The former is universal only up to equivalence, whereas the latter is universal up to isomorphism.

 In order to prove the results of \S \ref{section:kelly_tensor}, we use the construction of $\ca{A}\boxtimes \ca{B}$ from \cite[\S 6.5]{KELLY_BASIC}, which we now recall. As an aside, this construction also allows us to prove that the class of ind-abelian categories over a field is closed under the right exact tensor product.

 Let $\ca{A}$ and $\ca{B}$ be two $R$-linear categories. Let $\Sigma$ be the class of sequences of the form
\[
  \xymatrix{(A,B) \ar[r]^{p\otimes \id_B} & (A^{\prime},B) \ar[r]^{q\otimes \id_B} & (A^{\prime\prime},B) \ar[r] & 0} 
\]
 in $\ca{A} \otimes \ca{B}$, where 
 \[
 \xymatrix{A \ar[r]^{p} & A^{\prime} \ar[r]^{q} & A^{\prime\prime} \ar[r] & 0}
 \]
 is a right exact sequence in $\ca{A}$, and similarly for right exact sequences in $\ca{B}$. These sequences are exact if $R$ is a field, but they need not be exact in general. We write
\[
 \Lex_\Sigma[(\ca{A}\otimes \ca{B})^{\op},\Mod_R]
\]
 for the full subcategory of presheaves which send all the sequences in $\Sigma$ to left exact sequences. Note that if $R$ is a field, then the representable functors on $\ca{A} \otimes \ca{B}$ are contained in $\Lex_\Sigma[(\ca{A}\otimes \ca{B})^{\op},\Mod_R]$, but this does not hold for general commutative rings $R$. However, the category $ \Lex_\Sigma[(\ca{A}\otimes \ca{B})^{\op},\Mod_R]$ is always a reflective subcategory of the presheaf category $[(\ca{A}\otimes \ca{B})^{\op},\Mod_R]$ (see \cite[Theorem~6.11]{KELLY_BASIC}).

\begin{prop}\label{prop:rex_tensor}
 The right exact tensor product $\ca{A} \boxtimes \ca{B}$ of two $R$-linear categories is given by the closure of the reflections of the representable functors in
\[
 \Lex_\Sigma[(\ca{A}\otimes \ca{B})^{\op},\Mod_R]
\]
 under finite colimits. The univeral bilinear right exact functor
\[
 Z \colon \ca{A} \otimes \ca{B} \rightarrow \ca{A} \boxtimes \ca{B}
\]
 is given by the composite
\[
 \xymatrix{ \ca{A} \otimes \ca{B} \ar[r]^-{Y} & [(\ca{A} \otimes \ca{B})^{\op},\Mod_R] \ar[r]^-{R} & \Lex_{\Sigma} [(\ca{A} \otimes \ca{B})^{\op},\Mod_R] }
\]
 of the Yoneda embedding and the reflection $R$.
\end{prop}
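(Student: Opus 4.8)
The plan is to recognize the construction in the statement as Kelly's construction of the $\Phi$-cocompletion of $\ca{A}\otimes \ca{B}$ for $\Phi$ the class of finite colimits \cite[\S 6.5]{KELLY_BASIC}, and to verify directly that $Z=RY$ solves the relevant universal problem. Write $\ca{E}$ for the category $\Lex_{\Sigma}[(\ca{A}\otimes \ca{B})^{\op},\Mod_R]$, which is a reflective subcategory of the presheaf category, hence cocomplete, by \cite[Theorem~6.11]{KELLY_BASIC}. By the adjunction formula $[\ca{A}\otimes \ca{B},\ca{C}]\cong [\ca{A},[\ca{B},\ca{C}]]$ together with \cite[Formula~(6.24)]{KELLY_BASIC}, giving a right exact functor $\ca{A}\boxtimes \ca{B}\rightarrow \ca{C}$ into a finitely cocomplete $R$-linear category $\ca{C}$ amounts to giving a bilinear functor $G\colon \ca{A}\otimes \ca{B}\rightarrow \ca{C}$ that is right exact in each variable; and being right exact in the first variable means preserving cokernels of morphisms of $\ca{A}$, finite direct sums being preserved automatically by additivity, which is exactly the requirement that $G$ carry the $\ca{A}$-part of $\Sigma$ to right exact sequences, and symmetrically for $\ca{B}$. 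It therefore suffices to show that $Z$ exhibits the full subcategory of the statement as the universal finitely cocomplete $R$-linear category receiving a bilinear functor from $\ca{A}\otimes \ca{B}$ that carries $\Sigma$ to right exact sequences.

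First I would check that $Z$ has this property. Fix a sequence $(A,B)\rightarrow (A',B)\rightarrow (A'',B)\rightarrow 0$ in $\Sigma$. For any object $P$ of $\ca{E}$, the Yoneda lemma and the reflection adjunction give natural isomorphisms $\ca{E}\bigl(RY(A,B),P\bigr)\cong P(A,B)$ and likewise for $A'$ and $A''$; since $P$ lies in $\ca{E}$, the sequence $0\rightarrow P(A'',B)\rightarrow P(A',B)\rightarrow P(A,B)$ is exact, so the universal property of the cokernel shows that $RY(A'',B)$ is the cokernel of $RY(A,B)\rightarrow RY(A',B)$ in $\ca{E}$, and the same holds for the $\ca{B}$-part of $\Sigma$. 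Consequently the closure $\ca{A}\boxtimes \ca{B}$ of the reflected representables under finite colimits, which is finitely cocomplete because it is closed under finite colimits in the cocomplete category $\ca{E}$, contains the image of $Z$, and $Z$ carries $\Sigma$ to right exact sequences.

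It remains to establish the universal property. Let $G\colon \ca{A}\otimes \ca{B}\rightarrow \ca{C}$ be bilinear and right exact in each variable, and compose it with the embedding $\ca{C}\hookrightarrow \Ind(\ca{C})$. This embedding is fully faithful and preserves finite colimits because finite limits commute with filtered colimits in $\Mod_R$, so the composite still carries $\Sigma$ to right exact sequences. By the universal property of the presheaf category, $G$ extends to a cocontinuous $R$-linear functor on $[(\ca{A}\otimes \ca{B})^{\op},\Mod_R]$ with values in $\Ind(\ca{C})$; carrying $\Sigma$ to right exact sequences, it inverts the morphisms inverted by the reflection $R$ and hence factors through $R$ as a cocontinuous functor $\bar G\colon \ca{E}\rightarrow \Ind(\ca{C})$. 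Restricting $\bar G$ along the inclusion $\ca{A}\boxtimes \ca{B}\hookrightarrow \ca{E}$ yields a right exact functor whose values on the reflected representables are the objects $G(A,B)$ of $\ca{C}$; as $\ca{C}$ is closed under finite colimits in $\Ind(\ca{C})$, this restriction factors through $\ca{C}$, producing a right exact functor $\hat G\colon \ca{A}\boxtimes \ca{B}\rightarrow \ca{C}$ with $\hat G Z\cong G$. Faithfulness and fullness of restriction along $Z$ on right exact functors then follow because every object of $\ca{A}\boxtimes \ca{B}$ is built from objects $Z(A,B)$ by iterated finite colimits, which right exact functors preserve, so natural transformations between right exact functors out of $\ca{A}\boxtimes \ca{B}$ are determined by, and can be assembled from, their restrictions along $Z$, by induction on the stages of the colimit closure as in \cite[\S 3.5]{KELLY_BASIC}.

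The crux is bookkeeping rather than a single hard step: because the sequences in $\Sigma$ need not be colimit cocones in $\ca{A}\otimes \ca{B}$ for a general base ring, as remarked just before the statement, one cannot read off $\ca{A}\boxtimes \ca{B}$ inside the presheaf category and must work through the localization $\ca{E}$, and at each stage one has to confirm that the functor in play actually lands in $\ca{A}\boxtimes \ca{B}$, respectively in $\ca{C}$, rather than merely in the ambient cocomplete categories $\ca{E}$ and $\Ind(\ca{C})$.
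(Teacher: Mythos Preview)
Your proposal is correct and its starting point agrees with the paper's proof: both identify the construction as the instance of Kelly's general $\Phi$-cocompletion machinery in \cite[\S 6.5]{KELLY_BASIC}. The paper's proof is in fact nothing more than the one-line citation ``This is \cite[Proposition~6.21]{KELLY_BASIC}, where $\Phi\mbox{-}\mathbf{Alg}=\Lex_{\Sigma}[(\ca{A}\otimes \ca{B})^{\op},\Mod_R]$ and $\ca{D}=\ca{A}\boxtimes \ca{B}$,'' whereas you go on to unpack the universal property directly via the detour through $\Ind(\ca{C})$; this extra work is sound but not needed once the identification with Kelly's setup is made.
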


\begin{proof}
 This is \cite[Proposition~6.21]{KELLY_BASIC}, where $\Phi\mbox{-}\mathbf{Alg}=\Lex_{\Sigma}[(\ca{A}\otimes \ca{B})^{\op},\Mod_R]$ and $\ca{D}=\ca{A}\boxtimes \ca{B}$.
\end{proof}

 Thus, for $R=k$ a field, $\ca{A} \boxtimes \ca{B}$ is simply the closure of the representable functors under finite colimits in $\Lex_\Sigma[(\ca{A}\otimes \ca{B})^{\op},\Mod_R]$. We can therefore use Proposition~\ref{prop:fp_in_lex_sheaves} to show that for $R=k$ a field, ind-abelian categories are closed under the right exact tensor product.

\begin{prop}
 Let $\ca{A}$ and $\ca{B}$ be ind-abelian $k$-linear categories. Then $\ca{A} \boxtimes \ca{B}$ is ind-abelian.
\end{prop}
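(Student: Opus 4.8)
The plan is to present $\ca{A} \boxtimes \ca{B}$ as the category of finitely presentable objects of a locally finitely presentable abelian category and to invoke Proposition~\ref{prop:fp_in_lex_sheaves}. Since $k$ is a field, $\ca{A} \boxtimes \ca{B}$ is, by Proposition~\ref{prop:rex_tensor} and the remarks following it, the closure under finite colimits of the representable functors inside $\ca{D} \defl \Lex_{\Sigma}[(\ca{A} \otimes \ca{B})^{\op}, \Mod_k]$, where $\Sigma$ is the class of sequences $(A, B) \xrightarrow{p \otimes \id_B} (A^{\prime}, B) \xrightarrow{q \otimes \id_B} (A^{\prime\prime}, B) \to 0$ induced by right exact sequences $A \xrightarrow{p} A^{\prime} \xrightarrow{q} A^{\prime\prime} \to 0$ in $\ca{A}$, together with the mirror family induced by right exact sequences in $\ca{B}$. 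Over a field these sequences really are right exact in $\ca{A} \otimes \ca{B}$, because $- \otimes_k -$ is exact; consequently $R(\Sigma)$ consists of the morphisms $q \otimes \id_B$ and $\id_A \otimes q$ with $q$ a cokernel morphism in $\ca{A}$ or in $\ca{B}$, and every such morphism is a regular epimorphism in $\ca{A} \otimes \ca{B}$. It therefore suffices to verify that $\Sigma$ satisfies conditions~(i) and~(ii) of Definition~\ref{dfn:inductive_class}; Proposition~\ref{prop:fp_in_lex_sheaves} then gives that $\ca{D}$ is locally finitely presentable and abelian and that the closure in question is precisely $\ca{D}_{\fp}$, which is ind-abelian. (Although Definition~\ref{dfn:inductive_class} is phrased for a finitely cocomplete ambient category, the arguments leading to Proposition~\ref{prop:fp_in_lex_sheaves} use only the right exactness of the members of $\Sigma$, so this causes no difficulty even though $\ca{A} \otimes \ca{B}$ itself need not be finitely cocomplete.)

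Condition~(i) is immediate: if $q \otimes \id_B$ lies in $R(\Sigma)$ with $q$ a cokernel morphism in the ind-abelian category $\ca{A}$, then $q$ is a regular epimorphism, so tensoring the right exact sequence $A^{\prime} \xrightarrow{q} A^{\prime\prime} \to 0 \to 0$ in $\ca{A}$ with $\id_B$ produces a member of $\Sigma$ starting with $q \otimes \id_B$ whose last object is a zero object; the case of $\ca{B}$ is symmetric. For condition~(ii), suppose given a member of $\Sigma$, say $(A, B) \xrightarrow{p \otimes \id_B} (A^{\prime}, B) \xrightarrow{q \otimes \id_B} (A^{\prime\prime}, B) \to 0$ coming from a right exact sequence $A \xrightarrow{p} A^{\prime} \xrightarrow{q} A^{\prime\prime} \to 0$ in $\ca{A}$ (the case coming from $\ca{B}$ being symmetric), together with a morphism $f \colon (C, D) \to (A^{\prime}, B)$ satisfying $(q \otimes \id_B) \circ f = 0$. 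Viewing $f$ as an element of the $k$-vector space $\ca{A}(C, A^{\prime}) \otimes_k \ca{B}(D, B)$, write $f = \sum_{i=1}^{n} g_i \otimes h_i$ with the $h_i \in \ca{B}(D, B)$ linearly independent; this is the one point at which the field hypothesis enters. From $\sum_i (q g_i) \otimes h_i = 0$ and the linear independence of the $h_i$ we get $q g_i = 0$ for each $i$. Now apply condition~(ii) of Definition~\ref{dfn:ind_abelian} for $\ca{A}$, with bottom row $A \xrightarrow{p} A^{\prime} \xrightarrow{q} A^{\prime\prime} \to 0$, repeatedly: to $g_1 \colon C \to A^{\prime}$, obtaining an epimorphism $\pi_1 \colon C_1 \to C$ and a morphism $\tilde g_1 \colon C_1 \to A$ with $p \tilde g_1 = g_1 \pi_1$; then to $g_2 \pi_1 \colon C_1 \to A^{\prime}$ (which again composes to zero with $q$), obtaining an epimorphism $\pi_2 \colon C_2 \to C_1$ and a lift of $g_2 \pi_1 \pi_2$ along $p$; and so on. After $n$ steps one has an epimorphism $\pi \colon C_n \to C$, the composite of the $\pi_j$, together with morphisms $\tilde g_i \colon C_n \to A$ (the lifts from the earlier stages, postcomposed with the remaining $\pi_j$) satisfying $p \tilde g_i = g_i \pi$ for all $i$. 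Putting $f^{\prime} \defl \sum_i \tilde g_i \otimes h_i \colon (C_n, D) \to (A, B)$ and $p^{\prime} \defl \pi \otimes \id_D \colon (C_n, D) \to (C, D)$ one computes $(p \otimes \id_B) \circ f^{\prime} = \sum_i (g_i \pi) \otimes h_i = f \circ p^{\prime}$, so the square of Definition~\ref{dfn:inductive_class}~(ii) commutes; and $p^{\prime} \in R(\Sigma)$, because $\pi$, being an epimorphism in an ind-abelian category, is a cokernel morphism, so that $\pi \otimes \id_D$ is the penultimate morphism of the member of $\Sigma$ obtained by tensoring a right exact sequence $E \to C_n \xrightarrow{\pi} C \to 0$ in $\ca{A}$ with $\id_D$.

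The only genuine obstacle is the step where a single epimorphism $p^{\prime}$ is required to dominate the data attached to all of the $g_i$ simultaneously: ind-abelian categories need not have pullbacks, so one cannot simply intersect the epimorphisms produced for the individual $g_i$, and the iterative construction above is exactly what substitutes for the missing wide pullback. Once conditions~(i) and~(ii) are in hand, the outline delivers the claim: $\Sigma$ is an ind-class, so Proposition~\ref{prop:fp_in_lex_sheaves} shows that $\ca{D}$ is locally finitely presentable and abelian with $\ca{D}_{\fp}$ ind-abelian, and by Proposition~\ref{prop:rex_tensor} the category $\ca{D}_{\fp}$ is $\ca{A} \boxtimes \ca{B}$, which is therefore ind-abelian.
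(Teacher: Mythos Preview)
Your proof is correct and follows essentially the same route as the paper's: both reduce to Proposition~\ref{prop:fp_in_lex_sheaves} by verifying that $\Sigma$ is an ind-class, checking condition~(i) via the sequence with zero target and condition~(ii) by expanding $f$ over a linearly independent family in $\ca{B}(D,B)$ and iterating the ind-abelian lifting property in $\ca{A}$ to produce a single dominating epimorphism. Your parenthetical remark that the proofs of Lemma~\ref{lemma:inductive_coverage} through Proposition~\ref{prop:fp_in_lex_sheaves} never actually use finite cocompleteness of the ambient category---only right exactness of the sequences in $\Sigma$---is a worthwhile observation that the paper leaves implicit, since $\ca{A}\otimes\ca{B}$ is indeed not finitely cocomplete.
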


\begin{proof}
 Since we are working over a field, the class $\Sigma$ defined above consists of right exact sequences. By Proposition~\ref{prop:fp_in_lex_sheaves} it therefore suffices to show that $\Sigma$ is an ind-class. Condition~(i) follows from the fact that the sequence
\[
 \xymatrix{(V,B) \ar[r]^{q\otimes \id_B} & (U,B) \ar[r]^{0\otimes \id_A} & (0,B) \ar[r] & 0 }
\]
 lies in $\Sigma$ for all epimorphisms $q \colon V \rightarrow U$ in $\ca{A}$. To check that (ii) holds, let
\[
 \xymatrix{(W,B) \ar[r]^{p\otimes \id_B} & (V,B) \ar[r]^{q\otimes \id_B} & (U,B) \ar[r] & 0} 
\]
 be a sequence in $\Sigma$, and let $f \colon (A,B^{\prime}) \rightarrow (V,B)$ be a morphism with $q\otimes \id_B \circ f=0$. Choose a basis $(e_i)_{i \in I}$ of the vector space $\ca{B}(B^{\prime},B)$. Then there exist unique morphisms $f_i \colon A \rightarrow U$ such that $f=\sum_{i \in I} f_i \otimes e_i$. All but finitely many of the $f_i$ are zero, so without loss of generality we can write
\[
 f=\sum_{i=1}^n f_i \otimes e_i \smash{\rlap{.}}
\]
 From the fact that $\sum_{i \in I} qf_i \otimes e_i=0$ we deduce that $qf_i=0$ for all $i \in I$. Since $\ca{A}$ is ind-abelian, we can successively choose epimorphisms $p_i$ and morphisms $h_i$ such that the diagrams
\[
 \xymatrix{A_i \ar[d]_{h_i} \ar@{->>}[r]^{p_i} & A \ar[d]^{f_i \circ p_1 \circ \ldots \circ p_{i-1}} \\ W \ar[r]^-{p} & V }
\]
 commute. Then let $k_i=h_i \circ p_{i+1} \circ \ldots \circ p_n$ for $i<n$, $k_n=h_n$, and $p^{\prime}=p_1 \circ \ldots \circ p_n$. By construction, the diagram
\[
 \xymatrix{(A_n,B^{\prime}) \ar[d]_{\sum k_i \otimes e_i} \ar@{->>}[r]^-{p^{\prime} \otimes \id_{B^{\prime}}} & (A,B^{\prime}) \ar[d]^{\sum f_i \otimes e_i} \\ (W,B) \ar[r]^-{p \otimes \id_B} & (V,B) }
\]
 in $\ca{A}\otimes \ca{B}$ commutes. This shows that condition (ii) holds for this particular sequence in $\Sigma$. The case of a sequence in $\Sigma$ induced from a right exact sequence in $\ca{B}$ is proved analogously. The claim now follows from Proposition~\ref{prop:fp_in_lex_sheaves}~(iii).
\end{proof}

\section{Weakly Tannakian categories}\label{section:weakly_tannakian}

 \subsection{Recollections about flat functors}\label{section:flat_functors}

 Since an ind-abelian category need not have kernels, we cannot ask for a fiber functor to be left exact. The recognition results from \cite[\S\S 7-8]{SCHAEPPI} suggest that the correct generalization of left exact functors to this context is the notion of \emph{flat} functors.

\begin{dfn}\label{dfn:category_of_elements}
 Let $\ca{A}$ be an $R$-linear category, and let $F \colon \ca{A} \rightarrow \Mod_R$ be an $R$-linear functor. The \emph{category of elements} $\el(F)$ of $F$ has objects the pairs $(A,a)$ where $a\in FA$, and morphisms $(A,a) \rightarrow (B,b)$ the morphisms $f \colon A \rightarrow B$ with $Ff(a)=b$.
\end{dfn}

 The equivalence between conditions~(i), (ii) and~(iii) of the following proposition is well-known (see for example \cite[Theorem~3.2]{OBERST_ROHRL}). We will later need the fact that these are also equivalent to condition~(v). The proof that (v) implies (i) is a modification of the usual proof that (iii) implies (i).

\begin{prop}\label{prop:flat_characterization}
 Let $\ca{A}$ be an $R$-linear category with finite direct sums, and let $F \colon \ca{A} \rightarrow \Mod_R$ an $R$-linear functor. Then the following are equivalent:

\begin{enumerate}
 \item[(i)] The category $\el(F)$ is cofiltered;
\item[(ii)] The functor $F$ is a filtered colimit of representable functors;
 \item[(iii)] The left Kan extension $\Lan_Y F$ of $F$ along the Yoneda embedding
\[
 Y \colon \ca{A} \rightarrow \Prs{A}
\]
 is exact;
\item[(iv)] For all reflective subcategories $\ca{C}$ of $\Prs{A}$ which contain the representable presheaves the left Kan extension $\Lan_K F \colon \ca{C} \rightarrow \Mod_R$ is left exact, where
\[
 K \colon \ca{A} \rightarrow \ca{C}
\]
 denotes the corestriction of the Yoneda embedding;
\item[(v)] There exists a reflective subcategory $\ca{C}$ of $\Prs{A}$ such that the Kan extension $\Lan_K F$ of $F$ along the corestricted Yoneda embedding is left exact.
\end{enumerate}
\end{prop}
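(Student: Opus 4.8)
The plan is to close the cycle (i) $\Rightarrow$ (ii) $\Rightarrow$ (iii) $\Rightarrow$ (iv) $\Rightarrow$ (v) $\Rightarrow$ (i); since the equivalence of (i), (ii), and (iii) is classical (as recalled above), the work is in the portion (iii) $\Rightarrow$ (iv) $\Rightarrow$ (v) $\Rightarrow$ (i). Throughout I would use that $\Lan_Y F$ is the essentially unique $R$-linear cocontinuous extension of $F$ to $\Prs{A}$, hence is canonically identified with the tensor functor $(-) \otimes_{\ca{A}} F \colon \Prs{A} \to \Mod_R$; in particular it preserves all colimits and satisfies $\Lan_Y F \circ Y \cong F$.

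For (iii) $\Rightarrow$ (iv), let $\ca{C} \subseteq \Prs{A}$ be reflective and contain the representables, with corestricted Yoneda embedding $K \colon \ca{A} \to \ca{C}$. Unwinding the pointwise formula for $\Lan_K F$ and invoking the enriched Yoneda lemma together with fullness of the inclusion, I get $\Lan_K F(C) \cong C \otimes_{\ca{A}} F = \Lan_Y F(C)$ naturally in $C \in \ca{C}$, so that $\Lan_K F$ is simply the restriction of $\Lan_Y F$ to $\ca{C}$. As a reflective subcategory is closed under every limit that exists in the ambient category, finite limits in $\ca{C}$ are computed as in $\Prs{A}$, and left exactness of $\Lan_Y F$ therefore descends to $\Lan_K F$. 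The implication (iv) $\Rightarrow$ (v) is immediate, since $\Prs{A}$ itself (or $\Ind(\ca{A}) = \Lex[\ca{A}^{\op},\Mod_R]$) is reflective in $\Prs{A}$ and contains the representables.

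The heart of the argument is (v) $\Rightarrow$ (i), the ``modification of the usual proof that (iii) implies (i)''. Because $\ca{A}$ has finite direct sums, $\el(F)$ automatically has a zero object and, for any two objects, the object $(A \oplus B, (a,b))$ together with its two projections as a cone over them; so the only non-trivial cofilteredness condition is that every parallel pair in $\el(F)$ be equalized by some precomposition. I would fix $f, g \colon (A,a) \to (B,b)$ in $\el(F)$, set $d = f - g$ (using $R$-linearity), note $Fd(a) = 0$, and aim to produce $h \colon (C,c) \to (A,a)$ in $\el(F)$ with $dh = 0$. For this, form the kernel $K = \ker(Yd \colon YA \to YB)$ in $\Prs{A}$. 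Since $\ca{C}$ is reflective and contains $YA$ and $YB$, this kernel lies in $\ca{C}$, so applying the left exact functor $\Lan_K F = (\Lan_Y F)|_{\ca{C}}$ to $0 \to K \to YA \to YB$ gives $\Lan_Y F(K) = \ker(Fd) \ni a$. Presenting $K$ as the canonical colimit of representables, each structure map $\ca{A}(-, C_{\lambda}) \to K \hookrightarrow YA$ corresponds by Yoneda to a morphism $h_{\lambda} \colon C_{\lambda} \to A$ with $d h_{\lambda} = 0$; and since $\Lan_Y F$ preserves colimits, $\Lan_Y F(K) = \colim_{\lambda} F C_{\lambda}$ in $\Mod_R$, so the element $a$ is the image of a finitely supported family $(c_1, \dots, c_m)$ with $c_j \in FC_j$. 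Taking $C = \bigoplus_{j=1}^{m} C_j$, $\tilde c = (c_1, \dots, c_m) \in FC$ and $h = [h_1, \dots, h_m] \colon C \to A$ (so $dh = 0$), a short chase through $FC \to \Lan_Y F(K) \hookrightarrow FA$ identifies this composite with $Fh$ and yields $Fh(\tilde c) = a$. Then $h \colon (C, \tilde c) \to (A,a)$ is the morphism sought, $\el(F)$ is cofiltered, and (i) holds.

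The main obstacle is precisely in this last step: one cannot rerun the classical proof of (iii) $\Rightarrow$ (i) with $\ca{C}$ and $K$ in place of $\Prs{A}$ and $Y$, because $\Lan_K F$ need not be cocontinuous on $\ca{C}$, and so the density presentation of the relevant kernel cannot be pushed through it. The remedy is to keep the two ingredients apart: left exactness (the hypothesis of (v)) is used exactly once, for the single kernel computation --- which is legitimate because reflectivity forces $K$ to lie in $\ca{C}$ --- whereas every colimit manipulation is carried out with the genuinely cocontinuous functor $(-) \otimes_{\ca{A}} F$ on all of $\Prs{A}$.
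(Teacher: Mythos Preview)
Your proof is correct and follows essentially the same route as the paper's: the implications (i)$\Rightarrow$(ii)$\Rightarrow$(iii)$\Rightarrow$(iv)$\Rightarrow$(v) are handled identically (in particular, (iii)$\Rightarrow$(iv) via $\Lan_K F \cong (\Lan_Y F)|_{\ca{C}}$ and closure of reflective subcategories under limits), and your (v)$\Rightarrow$(i) matches the paper's argument --- form the kernel of $Y(f-g)$, observe it lies in $\ca{C}$ by reflectivity, use left exactness of $\Lan_K F$ there to place $a$ in $\Lan_Y F$ of the kernel, and then use cocontinuity of $\Lan_Y F$ on the density presentation to extract a preimage from a single representable. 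The only cosmetic difference is that the paper phrases the last extraction as ``the maps $\Lan_K F(x)$ are jointly epimorphic, so some single $x\colon \ca{A}(-,C)\to C'$ hits $a'$'', implicitly using that the comma category $\ca{A}/C'$ inherits finite direct sums, whereas you make the direct-sum step explicit by picking a finite family and passing to $C=\bigoplus C_j$.
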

 
\begin{proof}
 (i) implies (ii). Since $\ca{A}$ has finite direct sums, $F$ is the colimit of the representable functors over $F$. By the Yoneda lemma, this category is equivalent to $\el(F)^{\op}$.

 (ii) implies (iii). This follows from the fact that filtered colimits commute with finite limits (see Lemma~\ref{lemma:flat_lan} for details).

 (iii) implies (iv). By \cite[Theorem~4.47]{KELLY_BASIC}, $\Lan_K F$ is isomorphic to the restriction of $\Lan_Y F$ to $\ca{C}$. Since $\ca{C}$ is reflective, limits in $\ca{C}$ are computed as in $\Prs{A}$.
 
 (iv) implies (v). Obvious.

 (v) implies (i). Suppose that $\ca{C}$ is a reflexive subcategory of $\Prs{A}$, let $K$ be the corestriction of the Yoneda embedding, and assume that $\Lan_K F$ is left exact. We have to show that the category $\el(F)$ of elements of $F$ is cofiltered. 

 The category of elements of $F$ is clearly non-empty, and since $F$ preserves finite direct sums it suffices to check that for any two morphisms $f,g \colon (A,a) \rightarrow (B,b)$ in the category of elements, there exists a morphism $h \colon (C,z) \rightarrow (A,x)$ such that $fh=gh$. If $\ca{A}$ had kernels, we could simply take $C$ to be the kernel of $f-g$.

 Instead we consider the kernel $C^{\prime}$ of $f-g$ in the category $\ca{C}$. The canonical cocone on the diagram $\ca{A} \slash C^{\prime} \rightarrow \Prs{A}$ exhibits $C^{\prime}$ as colimit in the category of all presheaves on $\ca{A}$. This colimit is preserved by the cocontinuous functor $\Lan_Y F$. Since $\Lan_K F$ is isomorphic to the restriction of $\Lan_Y K$ to $\ca{C}$ (see \cite[Theorem~4.47]{KELLY_BASIC}), the colimit in question is also preserved by $\Lan_K F$. It follows in particular that the collection of morphisms
\begin{equation}\label{eqn:jointly_epi_family}
 \Lan_K F (x) \colon \Lan_K F \bigl( \ca{A}(-,A) \bigr) \rightarrow \Lan_K F ( C^{\prime})
\end{equation}
 where $x \in \ca{A} \slash C^{\prime}$ is jointly epimorphic. Since $K$ is fully faithful, the unit 
\[
 \alpha \colon \Lan_K F \cdot K \Rightarrow F 
\]
 of the Kan extension is an isomorphism. Let $a^{\prime}=\alpha_{A}^{-1}(a)$. Since $\Lan_K F$ is left exact, the kernel of $\Lan_K F (f-g)$ is $\Lan_K (C^{\prime})$. From the naturality of $\alpha$ it follows that $a^{\prime} \in \Lan_K (C^{\prime})$. Since the morphisms \eqref{eqn:jointly_epi_family} are jointly epimorphic, there exists a morphism
\[
 x \colon \ca{A}(-,C) \rightarrow C^{\prime}
\]
 and an element $c^{\prime} \in \Lan_K F \bigl(\ca{A}(-,C)\bigr)$ such that $a^{\prime}=\Lan_K F(x)(c^{\prime})$. The composite
\[
 \xymatrix{\ca{A}(-,C) \ar[r]^-{x} & C^{\prime} \ar[r] & \ca{A}(-,A)}
\]
 is of the form $\ca{A}(-,h)$ for a unique $h \colon C \rightarrow A$, and by construction we have $fh=gh$. Let $c=\alpha_C(c^{\prime})$. Using the naturality of $\alpha$ again we find that $F(h)(c)=a$. Thus $h \colon (C,c) \rightarrow (A,a)$ gives the desired morphism in the category $\el(F)$ which equalizes $f$ and $g$. This concludes the proof that $\el(F)$ is cofiltered.
\end{proof}

\begin{dfn}\label{dfn:flat}
 An $R$-linear functor $\ca{A} \rightarrow \Mod_R$ satisfying the equivalent conditions of Proposition~\ref{prop:flat_characterization} is called \emph{flat}. Let $B$ be a commutative $R$-algebra. By abuse of terminology we call an $R$-linear functor $\ca{A} \rightarrow \Mod_B$ \emph{flat} if the composite with the forgetful functor $\Mod_B \rightarrow \Mod_R$ is flat.
\end{dfn}

\subsection{Proof of the recognition theorem for Adams stacks} 
 Theorem~\ref{thm:recognition} is a consequence of the corresponding recognition theorem for Hopf algebroids, which we will prove in the next section.

\begin{thm}\label{thm:recognition_hopf}
 Let $R$ be a commutative ring, and let $\ca{A}$ be a weakly Tannakian $R$-linear ind-abelian category, with $R$-linear fiber functor $w \colon \ca{A} \rightarrow \Mod_B$. Then there exists an Adams Hopf algebroid $(B,\Gamma)$ in $\Mod_R$, together with a symmetric monoidal $R$-linear equivalence $\ca{A} \simeq \Comod_{\fp}(B,\Gamma)$ such that the triangle
\[
 \xymatrix{\ca{A} \ar[rd]_{w} \ar[r]^-{\simeq} & \Comod_{\fp}(B,\Gamma) \ar[d]^{V} \\ & \Mod_B}
\]
 is commutative, where $V$ denotes the forgetful functor.
\end{thm}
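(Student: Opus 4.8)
The plan is to reduce the assertion to the Tannakian reconstruction theorem for cocomplete abelian tensor categories by passing to the category of ind-objects. First I would pass to $\Ind(\ca{A})$: by Theorem~\ref{thm:ind_abelian} it is abelian, and it is moreover cocomplete, locally finitely presentable with $\Ind(\ca{A})_{\fp}=\ca{A}$, and symmetric monoidal with a tensor product that is cocontinuous in each variable and restricts along $K\colon\ca{A}\rightarrow\Ind(\ca{A})$ to the right exact tensor product of $\ca{A}$; the embedding $K$ is strong symmetric monoidal and right exact. Let $\overline{w}=\Lan_{K}w\colon\Ind(\ca{A})\rightarrow\Mod_{B}$ be the left Kan extension of $w$ along $K$, that is, the unique extension of $w$ that preserves filtered colimits. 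Since $w$ is flat and $\Ind(\ca{A})\simeq\Lex[\ca{A}^{\op},\Mod_{R}]$ is a reflective subcategory of $\Prs{A}$ containing the representables, condition~(iv) of Proposition~\ref{prop:flat_characterization} shows that $\overline{w}$ is left exact; as it also preserves filtered colimits by construction and finite colimits because $w$ is right exact, it is cocontinuous, and hence exact. A monoidal Kan extension argument equips $\overline{w}$ with the structure of a strong symmetric monoidal $R$-linear functor extending $w$.

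The next step is to check that $\overline{w}$ is faithful and that the dualizable objects generate $\Ind(\ca{A})$. Since $\overline{w}$ is exact, faithfulness amounts to reflecting zero objects. If $\overline{w}(M)=0$, write $M=\colim_{i}K(A_{i})$ as a filtered colimit with $A_{i}\in\ca{A}$, so $0=\overline{w}(M)=\colim_{i}w(A_{i})$ in $\Mod_{B}$. By condition~(ii) of Definition~\ref{dfn:weakly_tannakian} every $A_{i}$ is an epimorphic image of a dualizable object, so $w(A_{i})$ is a quotient of a finitely generated projective $B$-module and hence finitely generated; therefore for each $i$ there is a $j\geq i$ for which $w(A_{i})\rightarrow w(A_{j})$ vanishes, and faithfulness of $w$ forces the corresponding transition morphism $A_{i}\rightarrow A_{j}$ in $\ca{A}$ to be zero. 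Thus the transition morphisms of $(K(A_{i}))$ are eventually zero and $M=0$. For the generation statement, the dualizable objects of $\ca{A}$ stay dualizable in $\Ind(\ca{A})$ because $K$ is strong monoidal, and given $M=\colim_{i}K(A_{i})$, choosing epimorphisms $A_{i}^{\prime}\rightarrow A_{i}$ from dualizable objects and applying the right exact functor $K$ exhibits $M$ as a quotient of $\bigoplus_{i}K(A_{i}^{\prime})$.

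At this point $\Ind(\ca{A})$ is a cocomplete abelian symmetric monoidal $R$-linear category that is generated by its dualizable objects and carries a cocontinuous, exact, faithful, strong symmetric monoidal functor $\overline{w}\colon\Ind(\ca{A})\rightarrow\Mod_{B}$. I would then invoke the Tannakian reconstruction for such categories --- the comodule recognition theorem underlying \cite[\S\S 7--8]{SCHAEPPI}, compare \cite[Theorem~1.1.2]{SCHAEPPI_STACKS} --- to produce a flat Hopf algebroid $(B,\Gamma)$ in $\Mod_{R}$ together with a symmetric monoidal $R$-linear equivalence $\Ind(\ca{A})\simeq\Comod(B,\Gamma)$ under which $\overline{w}$ is identified with the forgetful functor $V$. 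Because $\Ind(\ca{A})$ is generated by dualizable objects, so is $\Comod(B,\Gamma)$; equivalently $\Gamma$ is a filtered colimit of dualizable comodules, so $(B,\Gamma)$ is an Adams Hopf algebroid. Finally, an equivalence of locally finitely presentable categories restricts to an equivalence of the full subcategories of finitely presentable objects, so the above restricts to a symmetric monoidal $R$-linear equivalence $\ca{A}\simeq\Ind(\ca{A})_{\fp}\simeq\Comod(B,\Gamma)_{\fp}=\Comod_{\fp}(B,\Gamma)$, and under it $w=\overline{w}|_{\ca{A}}$ corresponds to $V$, which is exactly the commutative triangle in the statement.

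The proof is thus mainly a reduction to an already available reconstruction theorem for cocomplete tensor categories; the genuinely new work --- and the main obstacle --- is the first two steps, namely establishing that the ind-extension $\overline{w}$ of a merely \emph{flat} fiber functor (recall that $\ca{A}$ has no kernels in general, so $w$ cannot be required to be left exact) is exact and faithful. This is precisely the point where flatness, via the characterization of flat functors in Proposition~\ref{prop:flat_characterization}, is indispensable and where the argument diverges from the abelian case treated in \cite{SCHAEPPI_STACKS}. A secondary technical point is verifying that dualizability and the generation property persist under the passage to $\Ind(\ca{A})$, so that the output is an \emph{Adams} Hopf algebroid and not merely a flat one.
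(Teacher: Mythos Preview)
Your outline matches the paper's: pass to $\Ind(\ca{A})$, extend $w$ to $L=\Lan_K w$, verify that $L$ is exact, conservative, strong symmetric monoidal, and that duals generate, then reconstruct the Hopf algebroid and check it is Adams. Two points of comparison are worth noting.

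\textbf{Faithfulness of $L$.} The paper's argument (Proposition~\ref{prop:L_comonadic}) differs from yours. It shows that $L$ has a right adjoint $\widetilde{w}$, and that the unit is a monomorphism: on objects of $\ca{A}$ this is just faithfulness of $w$, and to extend to all of $\Ind(\ca{A})$ one checks that $\widetilde{w}$ preserves filtered colimits, which amounts to $w(A)$ being finitely presentable for every $A\in\ca{A}$. This in turn uses the two-step resolution $D'\rightarrow D\rightarrow A\rightarrow 0$ by dualizable objects. Your argument (reflecting zero via finitely generated $w(A_i)$ and cofinal vanishing of transition maps) is a correct alternative that only needs $w(A)$ finitely generated, hence a one-step resolution.

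\textbf{The reconstruction step.} This is where your proposal is looser than the paper. You write that you would ``invoke the Tannakian reconstruction for such categories,'' citing \cite[Theorem~1.1.2]{SCHAEPPI_STACKS}; but that theorem is stated for the small abelian category, not for a cocomplete $\Ind(\ca{A})$, and there is no single black-box in the cited references that takes exactly the hypotheses you have assembled. The paper instead \emph{re-establishes} the needed intermediate steps: it proves $L$ is comonadic (Proposition~\ref{prop:L_comonadic}), shows $\ca{A}^{d}$ is dense in $\Ind(\ca{A})$ (Proposition~\ref{prop:duals_dense}), and then argues that the induced comonad is cocontinuous and Hopf monoidal (Corollaries~\ref{cor:comonad_from_duals} and~\ref{cor:comonad_Hopf_monoidal}), the last two following verbatim from \cite{SCHAEPPI_STACKS}. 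Only then does one obtain the flat Hopf algebroid $(B,\Gamma)$ with $\Ind(\ca{A})\simeq\Comod(B,\Gamma)$. The Adams conclusion is handled the same way in both approaches, via \cite[Theorem~1.3.1]{SCHAEPPI_STACKS}. So your plan is right in spirit, but the sentence invoking a ready-made reconstruction theorem should be replaced by the comonadicity-plus-Hopf-monoidal argument that the paper spells out.
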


\begin{proof}[Proof of Theorem~\ref{thm:recognition}]
 Recall that the category of quasi-coherent sheaves on an algebraic stack is equivalent to the category of comodules of the corresponding flat Hopf algebroid (see \cite[\S 3.4]{NAUMANN} and \cite[Remark~2.39]{GOERSS}).
 If $X$ is an Adams stack, then it is associated to an Adams Hopf algebroid $(B,\Gamma)$. Since the dualizable comodules of an Adams Hopf algebroid form a generator of the category of all comodules (see \cite[Proposition~1.4.1]{HOVEY}), the category $\Comod_{\fp}(B,\Gamma)$ satisfies condition~(ii) of the definition of a weakly Tannakian category. Moreover, since $\Comod(B,\Gamma)$ is a locally finitely presentable abelian category, $\Comod_{\fp}(B,\Gamma)$ is ind-abelian (see Theorem~\ref{thm:ind_abelian}). The forgetful functor is clearly right exact and faithful. It remains to check that it is flat. To do this one can either check by hand that the category of elements is cofiltered. Alternatively we can use the characterization of flat functors given in part~(v) of Proposition~\ref{prop:flat_characterization}.

 Note that the forgetful functor $\Comod(B,\Gamma) \rightarrow \Mod_B$ is the left Kan extension of its restriction to $\Comod_{\fp}(B,\Gamma)$ along the inclusion $\Comod_{\fp}(B,\Gamma) \subseteq \Comod(B,\Gamma)$. Indeed, the forgetful functor is a left adjoint and the subcategory of finitely presentable objects is dense. The claim therefore follows from \cite[Theorem~5.29]{KELLY_BASIC}. Since $(B,\Gamma)$ is a flat Hopf algebroid, the forgetful functor from comodules to $B$-modules is exact. Thus the forgetful functor $\Comod_{\fp}(B,\Gamma) \rightarrow \Mod_B$ is flat by part~(v) of Proposition~\ref{prop:flat_characterization}, with $\ca{C}=\Comod(B,\Gamma)$.

 This concludes the proof that $\Comod_{\fp}(A,\Gamma)$ is an $R$-linear ind-abelian weakly Tannakian category with fiber functor given by the forgetful functor.

 The converse follows from Theorem~\ref{thm:recognition_hopf} as in the proof of \cite[Theorem~1.1.2]{SCHAEPPI_STACKS}.
\end{proof}

\subsection{Proof of the recognition theorem for Hopf algebroids}
 We follow the strategy from \cite[\S 2.3]{SCHAEPPI_STACKS}. We fix a commutative ring $R$, a weakly Tannakian ind-abelian category $\ca{A}$, and a fiber functor $w \colon \ca{A} \rightarrow \Mod_B$. We write $\ca{A}^d$ for the full subcategory consisting of objects with duals.

\begin{prop}\label{prop:flat_for_ind_abelian}
 Let $\ca{A}$ be an essentially small finitely cocomplete category, and let $B$ be a commutative $R$-algebra. Then a functor $F \colon \ca{A} \rightarrow \Mod_B$ is flat if and only if the left Kan extension $L \colon \Ind(\ca{A}) \rightarrow \Mod_B$ of $F$ along the inclusion $\ca{A} \rightarrow \Ind(\ca{A})$ is left exact.
\end{prop}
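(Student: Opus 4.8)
The plan is to deduce this from the equivalence of conditions~(i), (iv), and~(v) in Proposition~\ref{prop:flat_characterization}, applied to the reflective subcategory $\ca{C} = \Ind(\ca{A})$ of $\Prs{A}$, after first reducing to the case $B = R$.

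For the reduction, let $U \colon \Mod_B \to \Mod_R$ be restriction of scalars along $R \to B$. This functor is exact, conservative, and a left adjoint (its right adjoint is $\Hom_R(B,-)$), hence it preserves all colimits. By Definition~\ref{dfn:flat}, $F$ is flat if and only if $U F \colon \ca{A} \to \Mod_R$ is flat. Writing $K \colon \ca{A} \to \Ind(\ca{A})$ for the inclusion and $L = \Lan_K F$, the fact that $U$ is cocontinuous yields a natural isomorphism $U L \cong \Lan_K(U F)$, since a cocontinuous functor preserves pointwise left Kan extensions. Moreover $U$ sends left exact functors to left exact functors because it is exact, and the converse implication holds because $U$ is conservative and preserves limits. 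Hence the proposition for $F$ is equivalent to the proposition for $U F$, and we may assume $B = R$.

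Next I would observe that, under the identification $\Ind(\ca{A}) \simeq \Lex[\ca{A}^{\op}, \Mod_R]$ recalled in \S\ref{section:ind_abelian}, the category $\Ind(\ca{A})$ is a reflective subcategory of $\Prs{A}$; it contains all the representables, since each $\ca{A}(-,A) \colon \ca{A}^{\op} \to \Mod_R$ carries finite colimits of $\ca{A}$ to finite limits and so is left exact; and $K$ is precisely the corestriction of the Yoneda embedding. As $\ca{A}$ is essentially small with finite direct sums, $L = \Lan_K F$ exists and Proposition~\ref{prop:flat_characterization} applies with $\ca{C} = \Ind(\ca{A})$: if $F$ is flat, then condition~(iv) gives that $L$ is left exact; conversely, if $L$ is left exact, then condition~(v) holds with this choice of $\ca{C}$, whence $F$ is flat. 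This proves the proposition.

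The argument involves no genuine obstacle beyond this bookkeeping; the two points worth spelling out carefully are that $\Ind(\ca{A})$ really is a reflective subcategory of $\Prs{A}$ containing the representables with $K$ the corestricted Yoneda embedding, and that the change-of-rings functor $U$ simultaneously commutes with $\Lan_K(-)$ and reflects left exactness, so that the statement reduces cleanly to Proposition~\ref{prop:flat_characterization}.
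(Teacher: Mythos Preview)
Your proof is correct and follows essentially the same approach as the paper: reduce to $B=R$ using that the forgetful functor $\Mod_B \to \Mod_R$ is exact, conservative, and cocontinuous, then apply Proposition~\ref{prop:flat_characterization} with $\ca{C}=\Ind(\ca{A})$. The paper's proof is simply a terser version of yours, citing only condition~(v) where you (more explicitly) invoke~(iv) for one direction and~(v) for the other.
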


\begin{proof}
 Since the forgetful functor $\Mod_B \rightarrow \Mod_R$ preserves all colimits, it preserves Kan extensions. It also preserves limits and reflects isomorphisms, so by composing with the forgetful functor we reduce the problem to the case $B=R$. The claim follows from condition~(v) of Proposition~\ref{prop:flat_characterization} for $\ca{C}=\Ind(\ca{A})$.
\end{proof}

 Using this proposition we can prove the analogue of \cite[Proposition~2.3.1]{SCHAEPPI_STACKS}.

\begin{prop}\label{prop:L_comonadic}
 The left Kan extension $L \colon \Ind(\ca{A}) \rightarrow \Mod_B$ of $w$ along the inclusion $\ca{A} \rightarrow \Ind(\ca{A})$ is left exact, comonadic, and strong symmetric monoidal.

 Moreover, the category $\ca{A}^{d}$ is a strong generator of $\Ind(\ca{A})$.
\end{prop}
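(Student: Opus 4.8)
The plan is to verify the four assertions in turn, drawing on the hypothesis that $w$ is a fiber functor (faithful, flat, right exact, strong symmetric monoidal), on Proposition~\ref{prop:flat_for_ind_abelian}, and on the standard theory of $\Ind$-completions.

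First I would observe that $L$ is cocontinuous: it is the unique filtered-colimit-preserving extension of $w$, and since the embedding $\ca{A}\hookrightarrow\Ind(\ca{A})$ preserves finite colimits and $w$ is right exact, $L$ preserves finite colimits of objects of $\ca{A}$ as well; as every colimit in $\Ind(\ca{A})$ is a filtered colimit of such finite colimits, $L$ preserves all colimits. Left exactness of $L$ is then immediate from Proposition~\ref{prop:flat_for_ind_abelian}, since $w$ is flat; because $\Ind(\ca{A})$ is abelian by Theorem~\ref{thm:ind_abelian}, this makes $L$ an exact functor of abelian categories. For the monoidal part, recall that the right exact symmetric monoidal structure of $\ca{A}$ extends uniquely to a symmetric monoidal structure on $\Ind(\ca{A})$ whose tensor product is cocontinuous in each variable and for which $\ca{A}\hookrightarrow\Ind(\ca{A})$ is strong monoidal. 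Since $L$ is cocontinuous and restricts to the strong monoidal $w$, the coherence constraints $LX\otimes LY\to L(X\otimes Y)$ and $B\to L(\mathbf{1})$ are isomorphisms: they are so on $\ca{A}$, both sides are cocontinuous in each variable, and every object of $\Ind(\ca{A})$ is a filtered colimit of objects of $\ca{A}$.

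Next I would treat the strong generator claim. The category $\Ind(\ca{A})$ is a Grothendieck abelian category, and every object $X$ is a filtered colimit of objects $A_i$ of $\ca{A}$, hence a quotient of the coproduct $\bigoplus_i A_i$. Condition~(ii) of Definition~\ref{dfn:weakly_tannakian} provides, for each $i$, an epimorphism $A_i'\to A_i$ with $A_i'$ dualizable, so $\bigoplus_i A_i'\to X$ is an epimorphism with all $A_i'\in\ca{A}^d$. Thus $\ca{A}^d$ is a generating family, hence a strong generator of $\Ind(\ca{A})$. I would record here the consequence, needed below, that $wA$ is a finitely generated $B$-module for every $A\in\ca{A}$: it is a quotient of $wA'$ for some $A'\in\ca{A}^d$, and $wA'$ is dualizable (as $L$ is strong monoidal), hence finitely generated projective.

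For comonadicity, $L$ is a cocontinuous functor between locally presentable categories, so it has a right adjoint; by Beck's comonadicity theorem it then suffices to show that $L$ reflects isomorphisms and preserves the equalizers the theorem requires, and the latter is automatic since $L$ is left exact and $\Ind(\ca{A})$ is complete. To get that $L$ reflects isomorphisms I would prove the stronger statement that $L$ is faithful (an exact faithful functor of abelian categories is conservative). Given a nonzero $f\colon X\to Y$, choose $A\in\ca{A}$ and a morphism $A\to X$ whose composite $\phi\colon A\to Y$ with $f$ is nonzero; writing $Y$ as a filtered colimit of objects $B_k$ of $\ca{A}$, represent $\phi$ by a morphism $h\colon A\to B_k$ which remains nonzero, and hence nonzero after applying the faithful $w$, under every transition map. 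Since $wA$ is finitely generated over $B$ and the $B_k$ form a filtered diagram, a pigeonhole argument on a finite generating set of $wA$ shows that the composite of $wh$ with $wB_k\to LY=\colim_k wB_k$ is nonzero; hence $L\phi\neq 0$, and therefore $Lf\neq 0$. This proves $L$ comonadic. The only genuinely delicate step is this faithfulness (equivalently, conservativity) of $L$: it is precisely where the assumptions that $w$ is faithful and that $\ca{A}$ is weakly Tannakian (so that each $wA$ is finitely generated) are indispensable, and it rests on comparing hom-sets into filtered colimits before and after applying $w$. Everything else is formal, following from cocontinuity, from Proposition~\ref{prop:flat_for_ind_abelian}, and from standard properties of $\Ind$-completions and Grothendieck categories.
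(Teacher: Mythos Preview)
Your proposal is correct, and the overall architecture (cocontinuity of $L$, left exactness via Proposition~\ref{prop:flat_for_ind_abelian}, strong monoidality by extension from $w$, comonadicity via Beck once $L$ is conservative) matches the paper. The genuine divergence is in how you establish conservativity.

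The paper argues structurally: a left adjoint is faithful iff its unit is a monomorphism; the unit at $A\in\ca{A}$ is the map $\ca{A}(-,A)\to\widetilde{w}(wA)$, which is monic because $w$ is faithful; and one extends to all of $\Ind(\ca{A})$ by showing the right adjoint $\widetilde{w}$ preserves filtered colimits, which amounts to $wA$ being finitely \emph{presentable}. For that last step the paper builds a right exact sequence $D'\to D\to A\to 0$ with $D,D'$ dualizable (using condition~(ii) twice together with the ind-abelian axiom that epimorphisms are regular), so $wA$ is a cokernel of a map between finitely generated projectives. Your argument is more direct and more elementary: you only need $wA$ finitely \emph{generated} (one application of condition~(ii) suffices), and you run a pigeonhole argument on a finite generating set to show that a map $wA\to\colim_k wB_k$ cannot vanish if none of the maps $wA\to wB_{k'}$ vanish. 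Both are valid; the paper's approach yields the stronger byproduct that $\widetilde{w}$ is filtered-colimit preserving and that each $wA$ is finitely presentable (which is implicitly reused elsewhere), while yours avoids the two-step construction and the appeal to ``filtered colimits of monomorphisms are monomorphisms'' in a Grothendieck category. Your treatment of the strong generator claim is likewise a direct epimorphism argument, whereas the paper invokes the closure-under-colimits criterion from \cite[Proposition~3.40]{KELLY_BASIC}; again both are fine, the paper's version dovetailing with the exact sequence it has already produced.
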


\begin{proof}
 Since $w$ is right exact, the right adjoint $\widetilde{w}$ of $\Lan_Y w$ factors through $\Ind(\ca{A})$. Since $L$ is isomorphic to the restriction of $\Lan_Y w$ to $\Ind(\ca{A})$ (see \cite[Theorem~4.47]{KELLY_BASIC}), this implies that $L$ is a left adjoint. It is left exact by Proposition~\ref{prop:flat_for_ind_abelian}. Since tensor products of ind-objects are defined as filtered colimits of tensor products in $\ca{A}$, the functor $L$ is symmetric strong monoidal. To show that $L$ is comonadic it remains to check that $L$ is conservative.

 Since $\Ind(\ca{A})$ is abelian, $L$ is conservative if and only if it is faithful, and a left adjoint is faithful if and only if the unit is a monomorphism. The unit at an object $A \in \ca{A}$ is given by
\[
 w_{-,A} \colon \ca{A}(-,A) \rightarrow \Ind\bigl(\ca{A})(w-,w(A)\bigr)=\widetilde{w}\bigl(w(A)\bigr) \smash{\rlap{,}}
\]
 which is a monomorphism by assumption. In any Grothendieck abelian category, filtered colimits of monomorphisms are monomorphisms. Therefore it suffices to prove that $\widetilde{w}$ preserves filtered colimits. Equivalently, we want to show that $w(A)$ is finitely presentable for every $A \in \ca{A}$. 

 By condition~(ii) of the definition of weakly Tannakian categories, there exists an epimorphism $D \rightarrow A$ in $\ca{A}$ such that $D$ has a dual. By definition of ind-abelian categories, this epimorphism is the cokernel of some morphism $A^{\prime} \rightarrow D$. Again using condition~(ii) we get an epimorphism $D^{\prime} \rightarrow A^{\prime}$ such that $D^{\prime}$ has a dual. Thus we have a right exact sequence
\begin{equation}\label{eqn:cokernel_of_duals}
 \xymatrix{D^{\prime} \ar[r] & D \ar[r] & A \ar[r] & 0} 
\end{equation}
 in $\ca{A}$. This exact sequence is preserved by the right exact functor $w$. Moreover, since $w$ is strong monoidal, it sends $D$ and $D^{\prime}$ to dualizable $B$-modules, which are precisely the finitely generated projective $B$-modules. Thus $w(A)$ is indeed finitely presentable. This concludes the proof that $L$ is faithful, hence that $L$ is comonadic.

 It remains to check that the category $\ca{A}^d$ of duals forms a strong generator. By \cite[Proposition~3.40]{KELLY_BASIC} it suffices to check that $\Ind(\ca{A})$ is the closure of the full subcategory $\ca{A}^{d}$ under colimits. Every object in $\Ind(\ca{A})$ is a filtered colimit of objects in $\ca{A}$. Since the inclusion of $\ca{A} \rightarrow \Ind(\ca{A})$ is right exact, the claim follows from the existence of the right exact sequence~\eqref{eqn:cokernel_of_duals}.
\end{proof}

 Using the second part of the above proposition we can show that \cite[Proposition~2.5.1]{SCHAEPPI_STACKS} holds for ind-abelian weakly Tannakian categories.

\begin{prop}\label{prop:duals_dense}
 Let $L$ be the left Kan extension of $w \colon \ca{A} \rightarrow \Mod_B$ along the inclusion $\ca{A} \rightarrow \Ind(\ca{A})$. Write $K \colon \ca{A}^{d} \rightarrow \Ind(\ca{A})$ for the evident inclusion. The functor $K$ is dense, and $L$ is the left Kan extension of the restriction of $w$ to $\ca{A}^d$ along $K$.
\end{prop}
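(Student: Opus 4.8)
The plan is to follow the proof of \cite[Proposition~2.5.1]{SCHAEPPI_STACKS}, replacing the resolution property of an abelian weakly Tannakian category by the fact, established in Proposition~\ref{prop:L_comonadic}, that $\ca{A}^{d}$ is a strong generator of $\Ind(\ca{A})$. Everything reduces to showing that $K$ is dense: once that is known, the ``Moreover'' clause is formal. Indeed, $L$ is a left adjoint by Proposition~\ref{prop:L_comonadic}, hence cocontinuous, so it preserves the canonical colimit presentation of $M$ by objects of $\ca{A}^{d}$ furnished by density of $K$; since the restriction of $L$ along the fully faithful inclusion $\ca{A} \to \Ind(\ca{A})$ is $w$ (left Kan extension along a fully faithful functor), we conclude $LK = w|_{\ca{A}^{d}}$ and that $L(M)$ is the colimit over $K \downarrow M$ of the values $w(D)$, which is precisely $\bigl(\Lan_K (w|_{\ca{A}^{d}})\bigr)(M)$, with the comparison map the canonical one.

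To prove density of $K$ we show that the nerve $\widetilde{K} \colon \Ind(\ca{A}) \to [(\ca{A}^{d})^{\op}, \Mod_R]$, $M \mapsto \Ind(\ca{A})(K{-},M)$, is fully faithful. Three facts are used throughout. First, every object of $\ca{A}^{d}$ is finitely presentable in $\Ind(\ca{A})$, since it lies in $\ca{A} = \Ind(\ca{A})_{\fp}$. Second, a finite direct sum of dualizable objects is dualizable, so finite subcoproducts of a coproduct $\bigoplus_i D_i$ with $D_i \in \ca{A}^{d}$ again lie in $\ca{A}^{d}$. Third, every $M \in \Ind(\ca{A})$ admits an epimorphism $\bigoplus_{i \in I} D_i \to M$ with $I$ small and $D_i \in \ca{A}^{d}$: writing $M$ as a small filtered colimit of objects of $\ca{A}$ (possible since $\Ind(\ca{A})$ is locally finitely presentable), each such object being a cokernel of a morphism between objects of $\ca{A}^{d}$ by the exact sequence~\eqref{eqn:cokernel_of_duals}, the resulting composites form a small jointly epimorphic family out of objects of $\ca{A}^{d}$. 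Faithfulness of $\widetilde{K}$ is then immediate: a morphism $M \to N$ which vanishes on every $D \to M$ with $D \in \ca{A}^{d}$ vanishes on a jointly epimorphic family of the above form, hence is zero.

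For fullness, given $\psi \colon \widetilde{K} M \Rightarrow \widetilde{K} N$, one chooses a presentation $\bigoplus_j D'_j \xrightarrow{\,d\,} \bigoplus_i D_i \xrightarrow{\,p\,} M \to 0$ with all $D_i, D'_j \in \ca{A}^{d}$ (apply the third fact to $M$ and then to the kernel of $p$), and uses naturality of $\psi$ along the inclusions of finite subcoproducts (which are dualizable by the second fact) to assemble a morphism $\overline{p} \colon \bigoplus_i D_i \to N$ whose restriction to each finite subcoproduct $D_S$ is $\psi_{D_S}(p|_{D_S})$. Since each $D'_j$ is finitely presentable, the $j$-th component of $d$ factors through some such $D_S$, so naturality of $\psi$ and $p \circ d = 0$ force $\overline{p} \circ d = 0$; hence $\overline{p}$ factors through $p$ as $\overline{p} = \overline{\psi} \circ p$ for a unique $\overline{\psi} \colon M \to N$. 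What remains, and this is the main obstacle, is to verify $\widetilde{K}(\overline{\psi}) = \psi$. Because objects of $\ca{A}^{d}$ need not be projective in $\Ind(\ca{A})$, a morphism $f \colon D \to M$ with $D \in \ca{A}^{d}$ need not lift along $p$; instead one covers $D$ by an epimorphism $D'' \to D$ with $D'' \in \ca{A}^{d}$ that does lift along $p$ (using the third fact applied to the pullback $D \pb{M} \bigoplus_i D_i$, together with finite presentability of $D$), and then a diagram chase through the lift, invoking naturality of $\psi$ and $\overline{\psi} \circ p = \overline{p}$, shows that $\overline{\psi} \circ f$ and $\psi_D(f)$ agree after precomposition with the epimorphism $D'' \to D$, hence agree. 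This verification, along with the bookkeeping in the construction of $\overline{\psi}$, accounts for essentially all of the work.
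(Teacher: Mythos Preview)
Your proof is correct, but it takes a markedly different route from the paper's. The paper proves density of $K$ in a single sentence: since $\ca{A}^{d}$ is a strong generator of $\Ind(\ca{A})$ (from Proposition~\ref{prop:L_comonadic}), it is dense by \cite[Theorem~2 and Example~(3)]{DAY_STREET_GENERATORS}, which says that in suitable categories---in particular Grothendieck abelian categories such as $\Ind(\ca{A})$---every strong generating small full subcategory is automatically dense. Your argument instead proves full faithfulness of the nerve $\widetilde{K}$ by hand, using a resolution by coproducts of dualizables, finite presentability of objects of $\ca{A}^{d}$, closure of $\ca{A}^{d}$ under finite direct sums, and the abelian structure of $\Ind(\ca{A})$ (to form kernels and pullbacks and to know that epimorphisms are stable under pullback). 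In effect you are re-deriving, in this concrete setting, the special case of Day--Street's theorem that the paper invokes. Your approach has the virtue of being self-contained and making explicit exactly which features of the situation are used; the paper's approach is much shorter and highlights that the result is an instance of a general phenomenon rather than something specific to weakly Tannakian categories. For the ``moreover'' clause both arguments are essentially the same: $L$ is cocontinuous (being a left adjoint), so by \cite[Theorem~5.29]{KELLY_BASIC} it is $\Lan_K(LK)$, and $LK \cong w|_{\ca{A}^{d}}$ because the restriction of $L$ to $\ca{A}$ is $w$.
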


\begin{proof}
 From Proposition~\ref{prop:L_comonadic} we know that $\ca{A}^d$ is a strong generator of $\Ind(\ca{A})$. Therefore it is dense by \cite[Theorem~2 and Example~(3)]{DAY_STREET_GENERATORS}.

 Since $L$ is comonadic (see Proposition~\ref{prop:L_comonadic}), it preserves all colimits. Thus it is the left Kan extension of $LK$ along $K$ by \cite[Theorem~5.29]{KELLY_BASIC}. The claim follows from the fact that the restriction of $L$ to $\ca{A}$ is isomorphic to $w$.
\end{proof}

\begin{cor}\label{cor:comonad_from_duals}
 Let $w_d$ denote the restriction of $w$ to $\ca{A}_d$. The comonad induced by the functor $L$ is naturally isomorphic (as a symmetric monoidal $R$-linear comonad) to the comonad induced by $\Lan_Y w_d \dashv \widetilde{w_d}$.
\end{cor}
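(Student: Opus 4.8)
The plan is to deduce the corollary from Proposition~\ref{prop:duals_dense} by realizing $\Ind(\ca{A})$ as a reflective subcategory of $[(\ca{A}^{d})^{\op},\Mod_R]$ through which the adjunction $\Lan_Y w_d\dashv\widetilde{w_d}$ restricts to the adjunction that defines the comonad of $L$. Recall first that, by Proposition~\ref{prop:L_comonadic}, $L$ is a left adjoint; write $\overline{w}\colon\Mod_B\to\Ind(\ca{A})$ for a right adjoint, so the comonad induced by $L$ is $L\overline{w}$ with its usual structure maps. Since $L$ restricts to $w$ along $\ca{A}\hookrightarrow\Ind(\ca{A})$ we have $LK\cong w_d$. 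By Proposition~\ref{prop:duals_dense}, $K$ is dense, that is, the nerve $N\colon\Ind(\ca{A})\to[(\ca{A}^{d})^{\op},\Mod_R]$, $N(X)=\Ind(\ca{A})(K-,X)$, is fully faithful; as $\Ind(\ca{A})$ is cocomplete, $N$ has a left adjoint (the weighted-colimit functor $P\mapsto\Lan_Y K(P)$), so $N$ exhibits $\Ind(\ca{A})$ as a reflective subcategory of $[(\ca{A}^{d})^{\op},\Mod_R]$.

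The next step is two identifications. First, $\widetilde{w_d}\cong N\overline{w}$: for $A\in\ca{A}^{d}$,
\[
 N(\overline{w}M)(A)=\Ind(\ca{A})(KA,\overline{w}M)\cong\Mod_B(LKA,M)\cong\Mod_B(w_d A,M)\cong\widetilde{w_d}(M)(A)
\]
using $L\dashv\overline{w}$ and $LK\cong w_d$, and one checks naturality; in particular $\widetilde{w_d}$ factors through the reflective inclusion $N$. Second, $(\Lan_Y w_d)\circ N\cong L$: applying the cocontinuous functor $\Lan_Y w_d$ to $N(X)=\Ind(\ca{A})(K-,X)$ computes the colimit of $w_d$ weighted by $\Ind(\ca{A})(K-,X)$, which by the pointwise Kan extension formula (\cite[\S 4]{KELLY_BASIC}) is $(\Lan_K w_d)(X)\cong L(X)$, the last isomorphism by Proposition~\ref{prop:duals_dense}.

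For the conclusion I use the following elementary fact: if $i\colon\ca{X}\hookrightarrow\ca{D}$ is a reflective subcategory and $F\dashv G\colon\ca{E}\to\ca{D}$ is an adjunction whose right adjoint factors as $G=iG'$, then $Fi\dashv G'$, and since $i$ is fully faithful the unit of $Fi\dashv G'$ is the restriction of that of $F\dashv G$; hence the comonad $(Fi)G'$ on $\ca{E}$ coincides with $FG$, structure maps included. Applying this to $F=\Lan_Y w_d$, $G=\widetilde{w_d}$, $\ca{X}=\Ind(\ca{A})$ embedded via $N$, and $G'=\overline{w}$ (using $\widetilde{w_d}\cong N\overline{w}$), the restricted adjunction $(\Lan_Y w_d)N\dashv\overline{w}$ induces on $\Mod_B$ the comonad $(\Lan_Y w_d)\widetilde{w_d}$ with its structure. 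Since $(\Lan_Y w_d)N\cong L$ and a left adjoint of a fixed functor is unique up to the evident isomorphism, this restricted adjunction is $L\dashv\overline{w}$; therefore $(\Lan_Y w_d)\widetilde{w_d}\cong L\overline{w}$ as $R$-linear comonads on $\Mod_B$. To upgrade this to symmetric monoidal comonads one endows $[(\ca{A}^{d})^{\op},\Mod_R]$ with the Day convolution induced by the symmetric monoidal structure on $\ca{A}^{d}$ (the dualizable objects of $\ca{A}$ are closed under tensor and contain the unit); then $\Lan_Y w_d$ is strong symmetric monoidal (as $w_d$ is the restriction of the strong monoidal $w$), $\widetilde{w_d}$ is lax symmetric monoidal, so $(\Lan_Y w_d)\widetilde{w_d}$ is a symmetric monoidal comonad, as is $L\overline{w}$ by Proposition~\ref{prop:L_comonadic}; the functors $K$, $N$, $\Lan_Y K$ are respectively strong, lax, strong symmetric monoidal in a compatible way and the isomorphisms above are monoidal, whence the isomorphism of comonads is monoidal.

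The main obstacle, I expect, will be exactly this last piece of bookkeeping of the monoidal and comonad structures along the chain of adjunctions; the point of routing the argument through the reflective embedding $N$ and the restricted adjunction is that it makes the comonad identification hold on the nose, leaving only the formal monoidal compatibilities (doctrinal adjunction, functoriality of Day convolution) to be checked.
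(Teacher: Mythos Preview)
Your argument is correct and is the standard route from the ingredients assembled in Proposition~\ref{prop:duals_dense}: the density of $K$ gives the reflective embedding $N$, the right adjoint $\widetilde{w_d}$ factors through it, and restricting the adjunction yields the same comonad. The paper itself does not spell this out but defers verbatim to \cite[Corollary~2.5.2]{SCHAEPPI_STACKS}, whose proof runs along exactly these lines; your final paragraph correctly identifies the remaining monoidal bookkeeping as routine doctrinal adjunction.
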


\begin{proof}
 The proof follows verbatim the proof of \cite[Corollary~2.5.2]{SCHAEPPI_STACKS}.   
\end{proof}

\begin{cor}\label{cor:comonad_Hopf_monoidal}
 The symmetric monoidal comonad induced by the functor $L$ from Proposition~\ref{prop:L_comonadic} is cocontinuous and Hopf monoidal.
\end{cor}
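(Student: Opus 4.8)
The plan is to transport everything across the isomorphism of symmetric monoidal comonads provided by Corollary~\ref{cor:comonad_from_duals}: it lets us replace the comonad induced by $L$ with the one induced by the adjunction $\Lan_Y w_d \dashv \widetilde{w_d}$, where $w_d$ is the restriction of $w$ to the full subcategory $\ca{A}^d$ of objects with duals. Since being cocontinuous and being Hopf monoidal are invariant under isomorphism of symmetric monoidal comonads, it suffices to establish these two properties for $G \defl \Lan_Y w_d \circ \widetilde{w_d}$.

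First I would check cocontinuity. The functor $\Lan_Y w_d$ is a left adjoint, hence cocontinuous, so it is enough that $\widetilde{w_d} \colon \Mod_B \rightarrow [(\ca{A}^d)^{\op},\Mod_R]$ preserve colimits. Colimits of presheaves are computed pointwise, and by the Yoneda lemma $\widetilde{w_d}(M)$ is the presheaf $A \mapsto \Mod_B\bigl(w_d(A),M\bigr)$, so this reduces to the statement that $\Mod_B\bigl(w_d(A),-\bigr)$ is cocontinuous for every $A \in \ca{A}^d$. That is automatic: by Proposition~\ref{prop:L_comonadic} the functor $w$ is strong symmetric monoidal, so it carries the dualizable object $A$ to a dualizable, i.e.\ finitely generated projective, $B$-module, and hence $\Mod_B\bigl(w_d(A),-\bigr) \cong \ldual{\bigl(w_d(A)\bigr)} \otimes_B (-)$ preserves all colimits.

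For the Hopf monoidal property, recall that a (lax) symmetric monoidal comonad $G$ on a symmetric monoidal category is Hopf monoidal precisely when its left fusion operator
\[
 H_{X,Y} \colon GX \otimes GY \xrightarrow{GX \otimes \delta_Y} GX \otimes GGY \xrightarrow{G_2} G(X \otimes GY)
\]
is invertible for all $X$ and $Y$. Since $G$ is cocontinuous, $H_{X,Y}$ is a morphism of functors that is cocontinuous in each variable separately, and the dualizable (= finitely generated projective) $B$-modules generate $\Mod_B$ under colimits; so the full subcategories on which $H_{X,Y}$ is invertible are closed under colimits in each variable, and it suffices to check invertibility when $X$ and $Y$ are dualizable. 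In that case the inverse is constructed from the evaluation and coevaluation maps of $X$ and $Y$ together with the comonad structure of $G$, exactly as in the corresponding step in \cite{SCHAEPPI_STACKS} (which in turn rests on the analysis of comonads of right autonomous functors in \cite{SCHAEPPI}). I expect this last point to be the only real obstacle: cocontinuity is a purely formal consequence of $w$ preserving duals, whereas producing the inverse of the fusion operator requires the explicit rigid structure on $\ca{A}^d$ together with a certain amount of diagram chasing.
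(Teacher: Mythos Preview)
Your proposal is correct and follows essentially the same approach as the paper, which simply records that the proof is verbatim that of \cite[Corollary~2.5.3]{SCHAEPPI_STACKS}. Your outline unpacks precisely that argument: transfer along the isomorphism of Corollary~\ref{cor:comonad_from_duals}, cocontinuity of $\widetilde{w_d}$ from the fact that each $w_d(A)$ is finitely generated projective, and the Hopf monoidal property via the fusion operator reduced (by cocontinuity) to dualizable modules.
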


\begin{proof}
 The proof follows verbatim the proof of \cite[Corollary~2.5.3]{SCHAEPPI_STACKS}.
\end{proof}

\begin{proof}[Proof of Theorem~\ref{thm:recognition_hopf}]
 From Proposition~\ref{prop:L_comonadic} we know that $\Ind(\ca{A})$ is equivalent to the category of comodules of the comonad induced by $L$. Using Corollary~\ref{cor:comonad_Hopf_monoidal}, we find that this comonad is induced by a flat Hopf algebroid $(B,\Gamma)$. It only remains to show that $(B,\Gamma)$ is an Adams Hopf algebroid. Since the category of dual comodules forms a generator of $\Ind(\ca{A})\simeq \Comod(B,\Gamma)$ (see Proposition~\ref{prop:L_comonadic}), this follows from \cite[Theorem~1.3.1]{SCHAEPPI_STACKS}.
\end{proof}

\section{Binary products and fiber products of Adams stacks}\label{section:adams}

 The proof of Theorem~\ref{thm:adams_fiber_product} proceeds in two steps. First, we prove that Adams stacks are closed under finite products.

\begin{thm}\label{thm:product_adams}
 If $X$ and $Y$ are Adams stacks, then $X\times Y$ is an Adams stack.
\end{thm}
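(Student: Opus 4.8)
The plan is to present $X \times Y$ by the tensor product of the Adams Hopf algebroids presenting $X$ and $Y$, and then to check directly that this tensor product Hopf algebroid is again Adams. Since $X$ and $Y$ are Adams stacks, they are associated to Adams Hopf algebroids $(B,\Gamma)$ and $(B',\Gamma')$ in $\Mod_R$, with $\QCoh(X) \simeq \Comod(B,\Gamma)$ and $\QCoh(Y) \simeq \Comod(B',\Gamma')$. I would first note that, since stackification preserves finite products, the stack associated to the product groupoid $\Spec(\Gamma \ten{R} \Gamma') \rightrightarrows \Spec(B \ten{R} B')$ is precisely $X \times Y$; one may also verify directly that $X\times Y$ has affine diagonal and that $\Spec(B \ten{R} B') \simeq \Spec B \times \Spec B' \to X \times Y$ is faithfully flat, being a composite of base changes of faithfully flat morphisms. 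In particular $X \times Y$ is algebraic, it is presented by the tensor product Hopf algebroid $(B \ten{R} B', \Gamma \ten{R} \Gamma')$, and $\QCoh(X \times Y) \simeq \Comod(B \ten{R} B', \Gamma \ten{R} \Gamma')$.

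The key step is to show that $(B \ten{R} B', \Gamma \ten{R} \Gamma')$ is an Adams Hopf algebroid, i.e.\ that $\Gamma \ten{R} \Gamma'$ is a filtered colimit of comodules which are dualizable over $B \ten{R} B'$. Write $\Gamma = \colim_i \Gamma_i$ and $\Gamma' = \colim_j \Gamma'_j$ as filtered colimits of dualizable comodules, as we may since $(B,\Gamma)$ and $(B',\Gamma')$ are Adams. I would equip each external tensor product $\Gamma_i \ten{R} \Gamma'_j$ with its evident comodule structure over $(B \ten{R} B', \Gamma \ten{R} \Gamma')$. Since $\Gamma_i$ is a direct summand of a finite free $B$-module and $\Gamma'_j$ of a finite free $B'$-module, $\Gamma_i \ten{R} \Gamma'_j$ is a direct summand of a finite free $B \ten{R} B'$-module, hence finitely generated projective over $B \ten{R} B'$; for the flat Hopf algebroid $(B \ten{R} B', \Gamma \ten{R} \Gamma')$ this means it is a dualizable comodule. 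The product of the two indexing categories is again filtered, the forgetful functor $\Comod(B \ten{R} B', \Gamma \ten{R} \Gamma') \to \Mod_{B \ten{R} B'}$ preserves colimits, and in $\Mod_{B \ten{R} B'}$ we have $\colim_{i,j} \Gamma_i \ten{R} \Gamma'_j = (\colim_i \Gamma_i) \ten{R} (\colim_j \Gamma'_j) = \Gamma \ten{R} \Gamma'$; hence this identification lifts to comodules. So $\Gamma \ten{R} \Gamma'$ is a filtered colimit of dualizable comodules, $(B \ten{R} B', \Gamma \ten{R} \Gamma')$ is Adams, and therefore so is $X \times Y$.

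The main obstacle I anticipate is the bookkeeping around the Hopf algebroid presenting $X \times Y$: identifying it with the tensor product $(B \ten{R} B', \Gamma \ten{R} \Gamma')$ and setting up the external tensor product of comodules over a tensor product of Hopf algebroids together with its structure maps. I would handle this either by citing the standard fact that passage to associated stacks preserves finite products (so products of presenting groupoids present products of stacks), or by a short descent argument along $\Spec(B \ten{R} B') \to X \times Y$. The remaining ingredients are elementary or already recalled in the excerpt: finitely generated projective modules and faithfully flat morphisms are closed under $\ten{R}$, dualizable comodules over a flat Hopf algebroid are exactly those whose underlying module is finitely generated projective, and a flat Hopf algebroid that is a filtered colimit of its dualizable comodules presents an Adams stack.
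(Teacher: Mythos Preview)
Your proposal is correct and follows essentially the same approach as the paper: both reduce to showing that Adams Hopf algebroids are closed under tensor product by writing $\Gamma \ten{R} \Gamma'$ as the filtered colimit of the dualizable comodules $\Gamma_i \ten{R} \Gamma'_j$, and both invoke that the associated stack functor preserves finite products. The paper's argument is terser, omitting the verifications (dualizability of $\Gamma_i \ten{R} \Gamma'_j$, filteredness of the product index category, lifting the colimit to comodules) that you spell out.
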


 As an immediate corollary, we find that schemes with the resolution property are closed under products.

\begin{cor}
 The class of quasi-compact semi-separated schemes with the strong resolution property is closed under finite products.
\end{cor}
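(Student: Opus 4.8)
This corollary is immediate from Theorem~\ref{thm:product_adams}; the plan is simply to translate between schemes and the discrete algebraic stacks they represent. First I would note that the class of quasi-compact semi-separated $R$-schemes is itself closed under binary products: if $X$ and $Y$ are such schemes, then $X \times_{\Spec(R)} Y \to Y$ is a base change of the quasi-compact morphism $X \to \Spec(R)$ and so is quasi-compact, whence $X \times_{\Spec(R)} Y$ is quasi-compact because $Y$ is; and the diagonal of $X \times_{\Spec(R)} Y$ over $\Spec(R)$ is the product of the affine diagonals of $X$ and $Y$, hence affine, so $X \times_{\Spec(R)} Y$ is semi-separated.

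Next, for a quasi-compact semi-separated $R$-scheme $Z$ the dualizable objects of $\QCoh(Z)$ are exactly the finite locally free sheaves, so $Z$ has the strong resolution property precisely when the discrete algebraic stack it represents is an Adams stack. Since the Yoneda embedding of $R$-schemes into the $2$-category of $\fpqc$-stacks preserves finite limits, the discrete stack represented by $X \times_{\Spec(R)} Y$ is the product, in stacks, of the discrete stacks represented by $X$ and $Y$. Thus, if $X$ and $Y$ have the strong resolution property, Theorem~\ref{thm:product_adams} shows that this product is an Adams stack, and by the first step it is the discrete stack of a quasi-compact semi-separated scheme, namely $X \times_{\Spec(R)} Y$; hence $X \times_{\Spec(R)} Y$ has the strong resolution property. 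An induction on the number of factors then handles arbitrary finite products.

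There is no genuine obstacle here: the substantive work is all contained in Theorem~\ref{thm:product_adams}, and what remains are the standard facts that quasi-compact semi-separated schemes are closed under products and that the Yoneda embedding into $\fpqc$-stacks commutes with these products.
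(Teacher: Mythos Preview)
Your proposal is correct and matches the paper's own argument: the paper's proof is simply ``This follows from Theorem~\ref{thm:product_adams} and the fact that the embedding of schemes in stacks preserves products,'' and you have merely unpacked this one-liner by spelling out why quasi-compact semi-separated schemes are closed under products and why the strong resolution property for such a scheme coincides with its representing stack being Adams. There is nothing to correct or add.
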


\begin{proof}
 This follows from Theorem~\ref{thm:product_adams} and the fact that the embedding of schemes in stacks preserves products.
\end{proof}

 In order to prove Theorem~\ref{thm:product_adams}, it is convenient to use the equivalence between quasi-coherent sheaves on an algebraic stack $X$ and comodules of the corresponding Hopf algebroid. 

 Recall that a Hopf algebroid $(A,\Gamma)$ is called an \emph{Adams Hopf algebroid} if $\Gamma$, considered as an $(A,\Gamma)$-comodule, is a filtered colimit of dualizable comodules $\Gamma_i$. Theorem~\ref{thm:product_adams} is a consequence of the following theorem from \cite{SCHAEPPI_STACKS}.

\begin{thm}\label{thm:adams_characterization}
 An algebraic stack has the strong resolution property if and only if it is associated to an Adams Hopf algebroid.
\end{thm}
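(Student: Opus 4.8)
The plan is to prove the two implications separately, working throughout with the equivalence between $\QCoh(X)$ and the category $\Comod(A,\Gamma)$ of comodules over the associated flat Hopf algebroid $(A,\Gamma)$, under which the sheaves admitting a dual correspond precisely to the comodules whose underlying $A$-module is finitely generated projective. At the outset I would record two elementary facts: a finite direct sum of dualizable comodules is again dualizable, and every dualizable comodule is finitely presentable in $\Comod(A,\Gamma)$ --- tensoring with a dualizable comodule preserves all colimits, while the relevant hom-modules are computed by finite limits, which commute with filtered colimits.

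\textbf{Adams $\Rightarrow$ strong resolution property.} For this direction I would follow Hovey's argument in \cite[Proposition~1.4.1]{HOVEY}. Write $\Gamma = \colim_{i} \Gamma_i$ as a filtered colimit of dualizable comodules. Given a comodule $M$ and an element $m \in M$, the coaction of $m$ lies in $\Gamma \otimes_A M \cong \colim_i (\Gamma_i \otimes_A M)$, hence is already detected at some stage $i$; unwinding this and passing to the dual of $\Gamma_i$ produces a comodule morphism from a dualizable comodule into $M$ whose image contains $m$. The direct sum of these morphisms over all $m \in M$ is an epimorphism onto $M$ from a direct sum of dualizable comodules, which is exactly the strong resolution property for $X$.

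\textbf{Strong resolution property $\Rightarrow$ Adams.} This is the substantial direction. The Adams condition is literally the assertion that the comodule $\Gamma$ --- which, viewed as an object of $\QCoh(X)$, is $\pr_* \mathcal{O}_{X_0}$ for the faithfully flat atlas $\pr \colon X_0 = \Spec A \to X$ --- is a filtered colimit of dualizable comodules. The strong resolution property only provides an epimorphism onto $\Gamma$ from a coproduct $\bigoplus_j P_j$ of dualizable comodules; such a coproduct is itself a filtered colimit of dualizable comodules (over its finite sub-coproducts), but a quotient of a filtered colimit of dualizable comodules need not remain one, because cokernels of morphisms between dualizable objects are in general not dualizable. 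So no purely formal argument is available, and one must use additional structure of $\Gamma$.

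The route I would pursue is to apply the strong resolution property not only to $\Gamma$ but also to the kernel of such an epimorphism, and to iterate, obtaining a presentation of $\Gamma$ by dualizable comodules whose successive syzygies are again covered by dualizables; one then wants to use that $\Gamma$ is flat over $A$ to reorganise this presentation --- in the spirit of Lazard's description of a flat module as a filtered colimit of finitely generated free modules --- into a genuine filtered colimit of dualizable subcomodules of $\Gamma$. An alternative would be to invoke a quotient presentation $X \simeq [V/\mathrm{GL}_n]$ with $V$ quasi-affine, available for quasi-compact stacks with affine diagonal and the resolution property (cf.\ \cite{GROSS}): then $\Gamma$ acquires the form $A \otimes \mathcal{O}(\mathrm{GL}_n)$, and $\mathcal{O}(\mathrm{GL}_n)$ is the filtered union of its finitely generated free subcomodules of matrix coefficients of bounded degree, so $\Gamma$ becomes a filtered colimit of dualizable comodules. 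I expect the conversion of the resolution property into such an honest filtered presentation of $\Gamma$ to be the main obstacle; the first direction and the bookkeeping relating the Hopf-algebroid and stack pictures are routine by comparison.
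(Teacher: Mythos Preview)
The paper does not actually prove this theorem: its entire proof is the one-line citation ``See \cite[Theorem~1.3.1]{SCHAEPPI_STACKS}.'' The result is imported wholesale from the author's earlier paper, so there is no argument here to compare your proposal against.

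On the substance of your sketch: the Adams $\Rightarrow$ resolution direction is indeed Hovey's argument and is fine. For the converse you correctly locate the real difficulty --- passing from an epimorphism $\bigoplus_j P_j \twoheadrightarrow \Gamma$ with dualizable $P_j$ to an honest filtered-colimit presentation of $\Gamma$ by dualizable \emph{sub}comodules --- and you are right that neither of your two proposed routes is complete as stated. The Lazard-style iteration of syzygies produces a resolution, not a filtered system, and flatness of $\Gamma$ over $A$ is a module-theoretic condition that does not by itself control the comodule structure of the pieces; some further input is needed to extract a filtered colimit of dualizable comodules from it. The quotient-stack route $X \simeq [V/\mathrm{GL}_n]$ with $V$ quasi-affine is a genuine alternative, but it imports Totaro's theorem (or Gross's extension) as a black box, and one still has to check that the matrix-coefficient filtration of $\ca{O}(\mathrm{GL}_n)$ pulls back to a filtration of $\Gamma$ by comodules that are dualizable over $A$, which is not automatic when $V$ is merely quasi-affine. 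Since the present paper treats the statement as known, the cleanest course is simply to cite \cite[Theorem~1.3.1]{SCHAEPPI_STACKS} as the paper does; if you want to understand the mechanism, that reference gives the actual argument.
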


\begin{proof}
 See \cite[Theorem~1.3.1]{SCHAEPPI_STACKS}.
\end{proof}

\begin{proof}[Proof of Theorem~\ref{thm:product_adams}]
 Since the associated stack functor preserves products, it suffices to show that groupoids corresponding to Adams Hopf algebroids are closed under finite products. Equivalently, we need to show that Adams Hopf algebroids are closed under finite coproducts. 

 If $(A,\Gamma)$ and $(B,\Sigma)$ are Adams Hopf algebroids, with $\Gamma$ a filtered colimit of dualizable comodules $\Gamma_i$ and $\Sigma$ a filtered colimit of dualizable comodules $\Sigma_j$, then $\Gamma \ten{R} \Sigma$ is the filtered colimit of the $(A\ten{R} B,\Gamma \ten{R} \Sigma)$-comodules $\Gamma_i \ten{R} \Sigma_j$. The conclusion follows from Theorem~\ref{thm:adams_characterization}.
\end{proof}

\subsection{Fiber products}
 In this section we will conclude the proof that the class of algebraic stacks with the resolution property is closed under (bicategorical) fiber products. Our strategy is to reduce the problem to Theorem~\ref{thm:product_adams}. We illustrate this for the case schemes. If $X$, $Y$ are quasi-compact semi-separated schemes with the resolution property, and $Z$ is a separated scheme, then the fiber product $X\pb{Z} Y$ is a closed subscheme of $X \times Y$. Since all closed subschemes of a scheme with the resolution property have the resolution property it follows that $X \pb{Z} Y$ has the resolution property. In this section we show that this argument generalizes to the case where $X$, $Y$, and $Z$ are algebraic stacks.

\begin{lemma}\label{lemma:thomasons_trick}
 Let $(A,\Gamma)$ be an Adams Hopf algebroid, and let $(B,\Sigma)$ be any flat Hopf algebroid. Let
\[
 F \colon \Comod(A,\Gamma) \rightarrow \Comod(B,\Sigma)
\]
 be a symmetric strong monoidal $R$-linear functor with right adjoint $G$. If the counit $\varepsilon \colon FG \Rightarrow \id$ is an epimorphism, then $(B,\Sigma)$ is an Adams Hopf algebroid.
\end{lemma}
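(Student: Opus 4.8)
The plan is to reduce the assertion to the fact, contained in \cite[Theorem~1.3.1]{SCHAEPPI_STACKS} (the Hopf algebroid side of Theorem~\ref{thm:adams_characterization}), that a flat Hopf algebroid is an Adams Hopf algebroid as soon as the dualizable comodules form a generator of its comodule category. Thus it suffices to show that every comodule $M \in \Comod(B,\Sigma)$ admits an epimorphism $\bigoplus_{i\in I} N_i \rightarrow M$ in which each $N_i$ is a dualizable $(B,\Sigma)$-comodule.

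First I would record two formal properties of $F$. Being strong symmetric monoidal, $F$ sends dual pairs to dual pairs, so it carries dualizable comodules to dualizable comodules. Being a left adjoint between abelian categories, $F$ is right exact; in particular it preserves arbitrary direct sums, and it preserves epimorphisms, since a morphism in an abelian category is epic precisely when its cokernel vanishes and $F$ preserves cokernels.

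Now fix $M \in \Comod(B,\Sigma)$ and consider $GM \in \Comod(A,\Gamma)$. Since $(A,\Gamma)$ is an Adams Hopf algebroid, its dualizable comodules generate $\Comod(A,\Gamma)$, so there is an epimorphism $e \colon \bigoplus_{i\in I} P_i \rightarrow GM$ with each $P_i$ dualizable. Applying $F$ gives an epimorphism $Fe \colon \bigoplus_{i\in I} FP_i \rightarrow FGM$ whose source is a direct sum of dualizable $(B,\Sigma)$-comodules, and composing with the counit yields
\[
 \bigoplus_{i\in I} FP_i \xrightarrow{Fe} FGM \xrightarrow{\varepsilon_M} M .
\]
This is a composite of two epimorphisms --- $Fe$ by the previous paragraph and $\varepsilon_M$ by hypothesis --- hence an epimorphism exhibiting $M$ as a quotient of a direct sum of dualizable $(B,\Sigma)$-comodules. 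Therefore the dualizable comodules generate $\Comod(B,\Sigma)$, and $(B,\Sigma)$ is an Adams Hopf algebroid.

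The argument is short once the generator-based characterization of Adams Hopf algebroids is invoked, so there is no serious obstacle; the only points requiring a little care are that $F$ preserves epimorphisms (which uses right exactness together with the abelianness of the comodule categories) and that one must argue via the characterization of ``Adams'' in terms of generators, since we have no direct handle on $\Sigma$ itself.
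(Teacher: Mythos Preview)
Your proof is correct and follows essentially the same route as the paper: both arguments pick an epimorphism from a sum of dualizables onto $GM$, push it through $F$, compose with the counit, and then invoke the characterization of Adams Hopf algebroids via generators (Theorem~\ref{thm:adams_characterization}). Your write-up is slightly more explicit about why $F$ preserves epimorphisms and direct sums, but otherwise there is no substantive difference.
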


\begin{proof}
 The argument we give is an adaption of an argument due to Thomason (see the proof of \cite[Lemma~2.6]{THOMASON}). 

 Fix a $(B,\Sigma)$-comodule $M$. Since $(A,\Gamma)$ has the resolution property, we can find dualizable $(A,\Gamma)$-comodules $N_i$, together with an epimorphism
\[
 \xymatrix{ \bigoplus_{i\in I} N_i \ar[r] & G(M) }
\]
 of comodules. Since $F$ preserves duals and epimorphism, the composite
\[
 \xymatrix{ \bigoplus_{i\in I} F(N_i) \ar[r] & FG(M) \ar[r]^-{\varepsilon} & M }
\]
 gives the desired epimorphism of $(B,\Sigma)$-comodules.
\end{proof}

\begin{lemma}\label{lemma:discrete_fibration}
 Let $(f_0,f_1) \colon (A,\Gamma) \rightarrow (B,\Sigma)$ be a morphism of Hopf algebroids. If the diagram
\[
 \xymatrix{ A \ar[r]^{f_0} \ar[d]_{\sigma} & B \ar[d]^{\sigma} \\ \Gamma \ar[r]_{f_1} & \Sigma}
\]
 (where $\sigma$ denotes the morphism representing the source) is a pushout diagram, then the counit of the induced left adjoint
\[
 \Comod(A,\Gamma) \rightarrow \Comod(B,\Sigma)
\]
 is an epimorphism.
\end{lemma}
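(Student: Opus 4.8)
The plan is to pass to the algebraic stacks represented by the two (flat) Hopf algebroids. Write $X$ and $Y$ for the stacks with $\Comod(A,\Gamma) \simeq \QCoh(X)$ and $\Comod(B,\Sigma) \simeq \QCoh(Y)$ (see \cite[\S 3.4]{NAUMANN}), and let $g \colon Y \to X$ be the morphism of stacks corresponding to $(f_0,f_1)$; under these equivalences the induced left adjoint of the lemma is the pullback functor $g^*$, and its right adjoint is the direct image $g_*$. A faithful right adjoint always has an epimorphic counit $\varepsilon$: a triangle identity exhibits each $g_*(\varepsilon_N)$ as a split epimorphism, and a faithful functor reflects epimorphisms. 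So it suffices to prove that $g_*$ is faithful.

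The first and only substantial step is to read the pushout hypothesis as the statement that $g$ is an affine morphism. Let $u \colon \Spec A \to X$ be the canonical faithfully flat presentation, so that $\Spec A \pb{X} \Spec A \cong \Spec \Gamma$. An object of $Y \pb{X} \Spec A$ over a test scheme is, locally, a point $y$ of $\Spec B$ together with an arrow of $X$ whose source is the image of $y$ under $f_0$; and the pushout hypothesis says precisely that $\Sigma \cong \Gamma \ten{A} B$ with $\Gamma$ an $A$-algebra via the source $\sigma$ and $B$ via $f_0$, so that such data is the same as a point of $\Spec \Sigma$. Hence $Y \pb{X} \Spec A$ is represented by the affine scheme $\Spec \Sigma$, and since affineness of a morphism of stacks may be checked after an fpqc base change, $g$ is affine. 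Write $\bar g \colon \Spec \Sigma \to \Spec A$ and $\bar u \colon \Spec \Sigma \to Y$ for the two projections; $\bar g$ is a morphism of affine schemes, and $\bar u$ is faithfully flat, being a base change of $u$.

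Now I would prove that $g_*$ is faithful. Let $\phi$ be a morphism in $\QCoh(Y)$ with $g_* \phi = 0$. Pulling back along the flat presentation $u$ and using flat base change for the affine---hence quasi-compact and quasi-separated---morphism $g$ gives an isomorphism $\bar g_* \bar u^* \phi \cong u^* g_* \phi = 0$. The functor $\bar g_*$ is restriction of scalars along a ring homomorphism, hence faithful, so $\bar u^* \phi = 0$; and $\bar u^*$ is faithful because $\bar u$ is faithfully flat. Therefore $\phi = 0$, so $g_*$ is faithful and the proof is complete.

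The argument can also be run without the language of stacks: the right adjoint $G$ of the induced functor sends a $(B,\Sigma)$-comodule $N$ to an $(A,\Gamma)$-comodule whose underlying module is $\Sigma \ten{B} N$ (formed using the source of $\Sigma$), so that $G$ followed by the forgetful functor to $\Mod_A$ is a composite of the faithful forgetful functor $\Comod(B,\Sigma) \to \Mod_B$, the faithfully flat base change $\Sigma \ten{B} (-)$, and a restriction of scalars; hence $G$ is faithful and the counit of the induced adjunction is an epimorphism. Either way, the single load-bearing point is that the source-square-pushout hypothesis is exactly what makes $G$ (equivalently $g_*$) faithful, via faithfully flat descent reducing matters to the triviality that restriction of scalars along a ring map is faithful.
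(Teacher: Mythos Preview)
Your approach is genuinely different from the paper's. The paper argues entirely at the level of comonads: the pushout hypothesis says that the comparison $2$-cell $\lambda\colon (\Sigma\ten{B}-)\circ (f_0)_\ast \Rightarrow (f_0)_\ast\circ(\Gamma\ten{A}-)$ is invertible, so by the adjoint lifting theorem dual to Johnstone's, the restriction-of-scalars functor $U\colon \Mod_B\to\Mod_A$ lifts to the right adjoint $G$ on comodules, and the counit of $F\dashv G$ is the lift of the ordinary counit $B\ten{A}(-)\Rightarrow\id$, which is epi on modules and hence on comodules. No stacks, no base change.

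Your geometric route is reasonable in spirit, but there is a genuine error. The fiber product $Y\pb{X}\Spec A$ is \emph{not} $\Spec\Sigma$; it is $\Spec B$. What you have computed is only the set of \emph{local objects} of the stack $Y\pb{X}\Spec A$: a pair $(b,\gamma)$ with $\gamma$ an arrow of $X$ having source $f_0(b)$ is indeed, by the pushout, a point of $\Spec\Sigma$. But these objects come with nontrivial isomorphisms (coming from arrows of $Y$), and passing to isomorphism classes collapses $\Spec\Sigma$ down to $\Spec B$. Concretely, for each such $(b,\gamma)$ the pushout furnishes a unique $\psi\in\Sigma$ with source $b$ and $f_1(\psi)=\gamma$, giving $(b,\gamma)\cong(t(\psi),\id)$; and the only automorphism of $(b,\id)$ is the identity. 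A quick sanity check exposes the slip: take $(f_0,f_1)=\id$, so $Y=X$; then $Y\pb{X}\Spec A=\Spec A=\Spec B$, not $\Spec\Gamma=\Spec\Sigma$.

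Once you replace $\Spec\Sigma$ by $\Spec B$, your base-change square has $\bar g=\Spec f_0$ and $\bar u$ the canonical presentation $\Spec B\to Y$, so $u^\ast g_\ast N$ is simply $N$ regarded as an $A$-module via $f_0$. Your alternative description of $G$ (underlying module $\Sigma\ten{B}N$) is therefore also wrong, for the same reason. With the correction the argument goes through and in fact converges to the paper's: $V_A\circ G$ is restriction of scalars composed with the forgetful functor, hence faithful, so the counit is epi. The paper's comonadic proof gets there more directly, avoids the stack machinery, and does not need flatness of the Hopf algebroids.
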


\begin{proof}
 We use the adjoint lifting theorem for comonads dual to \cite[Theorem~4]{JOHNSTONE}.

 The assumption that $(f_0,f_1)$ fits in the above pushout diagram implies that the 2-cell part $\lambda$ of the induced morphism
\[
 \xymatrix{\Mod_A \ar[r]^{(f_0)_{\ast}} \dtwocell\omit{^<-6>\lambda} \ar[d]_{\Gamma \ten{A} - }  &  \Mod_B \ar[d]^{\Sigma \ten{B} - } \\
 \Mod_A \ar[r]_{(f_0)_{\star}} & \Mod_B}
\]
 of comonads is invertible. Thus the mate of $\lambda^{-1}$ under the adjunction $(f_0)_{\ast} \dashv U$ in the sense of \cite{KELLY_STREET} endows $U$ with the structure of a morphism of comonads. Moreover, the counit $\varepsilon \colon (f_0)_{\ast} U \Rightarrow \id$ is a 2-cell of comonad morphisms by construction. This shows that the counit of the induced adjunction between the categories of comodules is obtained by lifting the counit of $(f_0)_{\ast} \dashv U$.

 Since $U$ is faithful, the counit $\varepsilon \colon (f_0)_{\ast} U \Rightarrow \id$ is an epimorphism. The conclusion follows from the fact that a morphism of comodules is an epimorphism if and only if its underlying morphism of modules is an epimorphism.
\end{proof}

\begin{prop}\label{prop:affine_morphism}
 Let $f \colon X \rightarrow Y$ be an affine morphism of algebraic stacks. Then the counit of the induced adjunction
\[
 f^{\ast} \colon \QCoh(Y) \rightleftarrows \QCoh(X) \colon f_{\ast} 
\]
 is an epimorphism. Moreover, if $Y$ is an Adams stack, then so is $X$. 
\end{prop}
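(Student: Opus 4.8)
The plan is to reduce the proposition to Lemmas~\ref{lemma:discrete_fibration} and \ref{lemma:thomasons_trick} by pulling back a flat affine groupoid presenting $Y$ along the affine morphism $f$. First I would choose a faithfully flat morphism $Y_0=\Spec B \rightarrow Y$ with $Y_0$ affine, which exists since $Y$ is algebraic. Because $Y$ has affine diagonal, $Y_1 \defl Y_0 \pb{Y} Y_0 = \Spec \Sigma$ is again affine, and the source and target maps $Y_1 \rightrightarrows Y_0$ are flat, being base changes of $Y_0 \rightarrow Y$; hence $(B,\Sigma)$ is a flat Hopf algebroid with $\Comod(B,\Sigma) \simeq \QCoh(Y)$. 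Now set $X_0 \defl X \pb{Y} Y_0 = \Spec A$, which is affine because $f$ is affine and $Y_0$ is affine. A direct computation identifies $X_1 \defl X_0 \pb{X} X_0$ with $X \pb{Y} Y_1 = \Spec \Gamma$, which is affine since it is a base change of $f$ along the affine scheme $Y_1$. As $X_0 \rightarrow X$ is faithfully flat (it is a base change of $Y_0 \rightarrow Y$), the groupoid $X_1 \rightrightarrows X_0$ presents $X$, giving a flat Hopf algebroid $(A,\Gamma)$ with $\Comod(A,\Gamma) \simeq \QCoh(X)$; moreover this groupoid is the pullback of $Y_1 \rightrightarrows Y_0$ along $f$.

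The morphism $f$ induces a morphism of Hopf algebroids $(B,\Sigma) \rightarrow (A,\Gamma)$, and under the equivalences above the induced adjunction $\Comod(B,\Sigma) \rightleftarrows \Comod(A,\Gamma)$ is $f^{\ast} \dashv f_{\ast}$. The identification of $X_1$ with $X_0 \pb{Y_0} Y_1$, the fiber product formed along the projection $X_0 \rightarrow Y_0$ and the source map $Y_1 \rightarrow Y_0$, says precisely that the square of source maps
\[
\xymatrix{B \ar[r] \ar[d]_{\sigma} & A \ar[d]^{\sigma} \\ \Sigma \ar[r] & \Gamma}
\]
is a pushout of commutative rings. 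Lemma~\ref{lemma:discrete_fibration} therefore shows that the counit of $f^{\ast} \dashv f_{\ast}$ is an epimorphism, which is the first assertion.

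For the second assertion, suppose $Y$ is an Adams stack. By Theorem~\ref{thm:adams_characterization} we may choose the presentation $Y_0 \rightarrow Y$ so that $(B,\Sigma)$ is an Adams Hopf algebroid. The functor $f^{\ast} \colon \Comod(B,\Sigma) \rightarrow \Comod(A,\Gamma)$ is symmetric strong monoidal and $R$-linear, has right adjoint $f_{\ast}$, and has epimorphic counit by the previous paragraph. Applying Lemma~\ref{lemma:thomasons_trick} with $(B,\Sigma)$ playing the role of the Adams Hopf algebroid and $(A,\Gamma)$ that of the arbitrary flat Hopf algebroid, we conclude that $(A,\Gamma)$ is an Adams Hopf algebroid. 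By Theorem~\ref{thm:adams_characterization} again, $X$ has the strong resolution property, so $X$ is an Adams stack.

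The step I expect to require the most care is the construction in the first paragraph together with the pushout identification in the second: one must verify that the groupoid presenting $Y$ pulls back along the affine morphism $f$ to a groupoid presenting $X$, and that the associated morphism of Hopf algebroids is of the ``discrete fibration'' type demanded by Lemma~\ref{lemma:discrete_fibration}, i.e.\ that the square of source maps is a pushout. Once this is in place, the proposition follows formally from Lemmas~\ref{lemma:discrete_fibration} and \ref{lemma:thomasons_trick} and Theorem~\ref{thm:adams_characterization}.
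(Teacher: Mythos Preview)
Your proposal is correct and follows essentially the same route as the paper: pull back a presenting flat Hopf algebroid of $Y$ along the affine morphism $f$ to obtain one for $X$, use the cancellation law for fiber products to identify the square of source maps as a pushout, and then apply Lemmas~\ref{lemma:discrete_fibration} and~\ref{lemma:thomasons_trick} together with Theorem~\ref{thm:adams_characterization}. The only difference is notational---you have swapped the roles of $(A,\Gamma)$ and $(B,\Sigma)$ relative to the paper---and you are slightly more explicit in invoking Theorem~\ref{thm:adams_characterization} to ensure the Hopf algebroid presenting $Y$ is Adams before applying Lemma~\ref{lemma:thomasons_trick}.
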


\begin{proof}
 Let $p \colon \Spec(A) \rightarrow Y$ be a faithfully flat morphism, and let $\Spec(\Gamma)= \Spec(A) \pb{Y} \Spec(A)$. Since $f$ is affine, the pullback of $p$ along $f$ is given by a faithfully flat morphism $p^{\prime} \colon \Spec(B) \rightarrow X$. Let $\Spec(\Sigma)=\Spec(B) \pb{X} \Spec(B)$.

 By construction, $(A,\Gamma)$ and $(B,\Sigma)$ are Hopf algebroids, with associated stacks $Y$ respectively $X$ (see \cite[\S 3.3]{NAUMANN}). Moreover, the pullback of $f$ gives a morphism of Hopf algebroids $(f_0,f_1) \colon (B,\Sigma) \rightarrow (A,\Gamma)$ whose induced morphism between associated stacks is $f$. The cancellation law for pullbacks shows that
\[
 \xymatrix{\Spec(\Sigma) \ar[d]_{\Spec(f_1)} \ar[r]^{\Spec(\sigma)} & \Spec(B) \ar[d]^{\Spec(f_0)} \\ \Spec(\Gamma) \ar[r]_{\Spec(\sigma)} & \Spec(A) }
\]
 is a pullback diagram. Passing along the equivalence between affine schemes over $R$ and commutative $R$-algebras, this shows that $(f_0,f_1)$ satisfies the condition of Lemma~\ref{lemma:discrete_fibration}. Therefore the counit of the induced adjunction between the categories of comodules is an epimorphism. By Lemma~\ref{lemma:thomasons_trick} it follows that $\Comod(B,\Sigma)$ is generated by duals if $\Comod(A,\Gamma)$ is. 

Using the natural equivalences
\[
 \Comod(A,\Gamma) \simeq \QCoh(Y) \quad\text{and}\quad \Comod(B,\Sigma) \simeq \QCoh(X)
\]
 (see \cite[Remark~2.39]{GOERSS} and \cite[\S 3.4]{NAUMANN}) we find that the counit of the adjunction $f^{\ast} \dashv f_{\ast}$ is an epimorphism, and that $X$ has the strong resolution property if $Y$ does.
\end{proof}

\begin{cor}\label{cor:adams_fiber_product}
 Let $X$ and $Y$ be Adams stacks, and let $Z$ be an algebraic stack. Then the pullback $X \pb{Z} Y$ of $f \colon X \rightarrow Z$ and $g \colon Y \rightarrow Z$ is an Adams stack for all morphisms of stacks $f$ and $g$.
\end{cor}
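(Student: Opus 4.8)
The plan is to deduce the statement from Theorem~\ref{thm:product_adams} and Proposition~\ref{prop:affine_morphism}, using the fact that every algebraic stack has an affine diagonal. The key observation is that the morphism
\[
 \pi \colon X \pb{Z} Y \longrightarrow X \times Y
\]
induced by the two projections is affine. To prove this I would invoke the cancellation law for (bicategorical) pullbacks to exhibit $\pi$ as the base change of the diagonal $\Delta \colon Z \rightarrow Z \times Z$ along $f \times g \colon X \times Y \rightarrow Z \times Z$; concretely, I would produce a pullback square
\[
 \xymatrix{X \pb{Z} Y \ar[r] \ar[d]_{\pi} & Z \ar[d]^{\Delta} \\ X \times Y \ar[r]_-{f \times g} & Z \times Z}
\]
of stacks. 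Since $Z$ is algebraic its diagonal $\Delta$ is affine, and affine morphisms of stacks are stable under base change, so $\pi$ is affine. In particular $X \pb{Z} Y$ is again an algebraic stack, since the source of an affine morphism with algebraic target is algebraic: pulling an $\fpqc$-atlas of $X \times Y$ back along $\pi$ yields an affine scheme faithfully flat over $X \pb{Z} Y$, and factoring the diagonal of $X \pb{Z} Y$ through $X\pb{Z}Y \pb{X\times Y} X\pb{Z}Y$ shows that it is affine as well.

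Granting the affineness of $\pi$, the proof concludes in two steps: first, $X \times Y$ is an Adams stack by Theorem~\ref{thm:product_adams}, since $X$ and $Y$ are; second, applying the second assertion of Proposition~\ref{prop:affine_morphism} to the affine morphism $\pi$ shows that $X \pb{Z} Y$ is an Adams stack, which is what we wanted.

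I do not expect a serious obstacle here: once Theorem~\ref{thm:product_adams} and Proposition~\ref{prop:affine_morphism} are available the argument is formal, and it is precisely the stack-theoretic version of the remark made before Lemma~\ref{lemma:thomasons_trick} that $X \pb{Z} Y$ is a closed subscheme of $X \times Y$ when $Z$ is a separated scheme. The only points needing care are the bicategorical bookkeeping behind the cancellation identity and the stability of affine morphisms under base change --- now for $\fpqc$-stacks rather than for schemes --- together with the (standard) fact that $X \pb{Z} Y$ is an algebraic stack, which is needed for Proposition~\ref{prop:affine_morphism} to apply.
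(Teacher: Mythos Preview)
Your proposal is correct and follows essentially the same route as the paper: reduce to Theorem~\ref{thm:product_adams} and Proposition~\ref{prop:affine_morphism} by exhibiting $X\pb{Z} Y \to X\times Y$ as a base change of the affine diagonal $\Delta\colon Z \to Z\times Z$. You add a brief justification that $X\pb{Z} Y$ is itself an algebraic stack (needed to invoke Proposition~\ref{prop:affine_morphism}), which the paper leaves implicit; this is a harmless extra bit of care.
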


\begin{proof}
 By Theorem~\ref{thm:product_adams}, the product $X\times Y$ is an Adams stack, so by Proposition~\ref{prop:affine_morphism} it suffices to show that the morphism
\[
 X\pb{Z} Y \rightarrow X\times Y
\]
 is affine. Since this morphism fits in a bicategorical pullback square
\[
 \xymatrix{ X\pb{Z} Y \ar[d]  \ar[r] & Z \ar[d]^{\Delta} \\ X\times Y \ar[r]_-{f\times g} & Z\times Z}
\]
 it is obtained by pulling back the diagonal of $Z$. The claim follows from the fact that the diagonal of an algebraic stack is affine.
\end{proof}

\begin{proof}[Proof of Theorem~\ref{thm:adams_fiber_product}]
 The category of Adams stacks is closed under fiber products by Corollary~\ref{cor:adams_fiber_product}.
\end{proof}

\section{Bicategorical coproducts of right exact symmetric monoidal categories}\label{section:rex_coproduct}
 Recall that a symmetric monoidal $R$-linear category is \emph{right exact symmetric monoidal} if it is finitely cocomplete and the tensor product is right exact in each variable. In this section we will prove the following theorem.

\begin{thm}\label{thm:rex_tensor_coproduct}
 Let $\ca{A}$ and $\ca{B}$ be right exact symmetric monoidal $R$-linear categories. Then the right exact tensor product $\ca{A}\boxtimes \ca{B}$ is a bicategorical coproduct in the 2-category of right exact symmetric monoidal $R$-linear categories.
\end{thm}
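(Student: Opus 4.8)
The plan is to recognize $\ca{A} \boxtimes \ca{B}$, with its evident right exact symmetric monoidal structure, as the tensor product of the symmetric pseudomonoids $\ca{A}$ and $\ca{B}$ in a suitable symmetric monoidal bicategory, and then to invoke Theorem~\ref{thm:pseudomonoid_coproduct}. Concretely, let $\Rex$ also denote the $2$-category of essentially small finitely cocomplete $R$-linear categories, right exact $R$-linear functors, and $R$-linear natural transformations. Kelly's tensor product $\boxtimes$, together with the unit $\Mod_R^{\fp}$ (the free finitely cocomplete $R$-linear category on one object, characterized by $\Rex[\Mod_R^{\fp},\ca{C}] \simeq \ca{C}$), makes $\Rex$ into a symmetric monoidal bicategory. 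This can be extracted from Kelly's construction: the internal hom $\Rex[-,-]$ and the adjunction $\Rex[\ca{A}\boxtimes \ca{B},\ca{C}] \simeq \Rex[\ca{A},\Rex[\ca{B},\ca{C}]]$ of \cite[Formula~(6.24)]{KELLY_BASIC} exhibit $\Rex$ as biclosed, the symmetry of the tensor product of $R$-linear categories descends to $\boxtimes$, and a symmetric biclosed bicategory underlies a symmetric monoidal bicategory.

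The first substantive step is to identify the symmetric pseudomonoids in $(\Rex,\boxtimes,\Mod_R^{\fp})$ with right exact symmetric monoidal $R$-linear categories. Unwinding the universal property of $\boxtimes$, a morphism $\ca{A}\boxtimes \ca{A} \to \ca{A}$ in $\Rex$ is the same datum as a functor $\ca{A}\times \ca{A} \to \ca{A}$ that is right exact in each variable; a morphism $\Mod_R^{\fp}\to \ca{A}$ is the same as an object of $\ca{A}$; and the invertible $2$-cells and coherence axioms defining a symmetric pseudomonoid translate precisely into the associativity, unit, and symmetry isomorphisms together with the pentagon, triangle, and hexagon identities. In the same way, pseudomonoid morphisms correspond to right exact strong symmetric monoidal functors and their $2$-cells to monoidal natural transformations. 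Hence the $2$-category of symmetric pseudomonoids in $\Rex$ is exactly the $2$-category of right exact symmetric monoidal $R$-linear categories appearing in the statement.

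It then remains to observe that $\ca{A}\boxtimes \ca{B}$, equipped with the tensor product determined on (reflections of) representables by $(A\boxtimes B)\otimes (A^{\prime}\boxtimes B^{\prime}) = (A\otimes A^{\prime})\boxtimes(B\otimes B^{\prime})$ --- which extends because this assignment is right exact in each of its four variables --- together with unit object $I_{\ca{A}}\boxtimes I_{\ca{B}}$, is precisely the tensor product of the pseudomonoids $\ca{A}$ and $\ca{B}$ formed in the monoidal bicategory $\Rex$. The key point is that the underlying object of that tensor product of pseudomonoids is $\ca{A}\boxtimes \ca{B}$ exactly because $\boxtimes$ is the monoidal product of $\Rex$. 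Applying Theorem~\ref{thm:pseudomonoid_coproduct}, the tensor product of two symmetric pseudomonoids is a bicategorical coproduct in the $2$-category of symmetric pseudomonoids, with coprojections the maps $\ca{A}\to \ca{A}\boxtimes \ca{B}$, $A\mapsto A\boxtimes I_{\ca{B}}$, and $\ca{B}\to \ca{A}\boxtimes \ca{B}$, $B\mapsto I_{\ca{A}}\boxtimes B$. Translating back along the identification of the previous paragraph yields the theorem.

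The main obstacle is the first step: producing the full symmetric monoidal bicategory structure on $\Rex$ with all of its coherence data, as opposed to merely the universal property of $\boxtimes$ at the level of objects. One either assembles this structure explicitly from Kelly's results on $\Phi$-cocomplete categories, or appeals to the general principle that a symmetric biclosed bicategory with the displayed hom--tensor adjunction underlies a symmetric monoidal bicategory. A secondary, purely bookkeeping obstacle is verifying the dictionary of the second paragraph in detail --- matching the numerous coherence axioms of a symmetric pseudomonoid with those of a symmetric monoidal category, and matching pseudomonoid morphisms and their transformations with strong symmetric monoidal functors and monoidal transformations --- which is routine but must be in place to know that Theorem~\ref{thm:pseudomonoid_coproduct} applies to exactly the $2$-category named in the statement.
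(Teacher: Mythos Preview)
Your proposal is correct and follows essentially the same approach as the paper: both arguments recognize $(\Rex,\boxtimes)$ as a symmetric monoidal bicategory, identify right exact symmetric monoidal $R$-linear categories with symmetric pseudomonoids therein, and then invoke Theorem~\ref{thm:pseudomonoid_coproduct}. The paper's proof is terser because the surrounding discussion has already set up the symmetric monoidal structure on $\Rex$ via the $\Cat$-multicategory framework, whereas you spell out the same identifications and the attendant bookkeeping more explicitly.
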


 First note that the 2-category $\Cat_R$ of $R$-linear categories and the 2-category $\Rex$ of finitely cocomplete $R$-linear categories and right exact $R$-linear functors both are strict symmetric $\Cat$-multicategories. The multi-hom-categories are given by the category
\[
 [\ca{A}_1,\ldots, \ca{A}_n ; \ca{B}]
\]
 of multi-functors that are $R$-linear in each variable, and the category
\[
 \Rex [\ca{A}_1,\ldots, \ca{A}_n ; \ca{B}]
\]
 of multi-functors that are $R$-linear and right exact in each variable respectively. The forgetful functor
\[
 \Rex \rightarrow \Cat_R 
\]
 is clearly a functor of strict symmetric $\Cat$-multicategories.

 Moreover, we have natural \emph{isomorphisms}
\begin{equation}\label{eqn:tensor}
 [\ca{A},\ca{B} ; \ca{C}] \cong [\ca{A}\otimes \ca{B},\ca{C}]
\end{equation}
 and natural \emph{equivalences}
\begin{equation}\label{eqn:rex_tensor}
 \Rex [\ca{A},\ca{B} ; \ca{C}] \simeq \Rex [\ca{A}\boxtimes \ca{B},\ca{C}]
\end{equation}
 by the universal properties of $\ca{A}\otimes \ca{B}$ and $\ca{A} \boxtimes \ca{B}$. From~\eqref{eqn:tensor} and \eqref{eqn:rex_tensor} we get a symmetric monoidal 2-category structure on $\Cat_R$ and $\Rex$. Note that structure on $\Cat_R$ is stricter. For example, the associator in $\Cat_R$ is an isomorphism and the pentagon commutes strictly, whereas in $\Rex$, the associator is only an equivalence, and the pentagon only commutes up to coherent natural isomorphism. This stems from the fact that~\eqref{eqn:tensor} is an isomorphism, while~\eqref{eqn:rex_tensor} is only an equivalence.

 A right exact symmetric monoidal $R$-linear category is precisely a symmetric pseudomonoid in the symmetric monoidal bicategory $(\Rex,\boxtimes)$, in the sense of \cite[\S 4]{MCCRUDDEN_BALANCED}. Since the pseudofunctor
\[
 -\boxtimes - \colon \Rex \times \Rex \rightarrow \Rex
\]
 is symmetric monoidal (cf.\ \cite[Proposition~16 and Definition~18]{DAY_STREET}) it follows that the tensor product $\ca{A}\boxtimes \ca{B}$ of two right exact symmetric monoidal $R$-linear categories is a right exact symmetric monoidal $R$-linear category in a natural way. Therefore Theorem~\ref{thm:rex_tensor_coproduct} is a consequence of the following theorem.

\begin{thm}\label{thm:pseudomonoid_coproduct}
 Let $(\ca{M},\otimes,I)$ be a symmetric monoidal bicategory, and let $A_0$ and $A_1$ be symmetric pseudomonoids in $\ca{M}$, with product $p_k \colon A_k\otimes A_k \rightarrow A_k$ and unit $i_k \colon I \rightarrow A_k$, $k=0,1$. Then the morphisms
\[
 \xymatrix{ A_0 \simeq A_0\otimes I \ar[r]^-{A_0 \otimes i_1} & A_0 \otimes A_1 } \quad \text{and} \quad
 \xymatrix{ A_1 \simeq I\otimes A_1 \ar[r]^-{i_0 \otimes A_1} & A_0 \otimes A_1 }
\]
 are symmetric strong monoidal, and they exhibit $A_0 \otimes A_1$ as bicategorical coproduct of $A_0$ and $A_1$ in the bicategory of symmetric pseudomonoids in $\ca{M}$. 
\end{thm}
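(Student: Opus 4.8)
The plan is to verify the universal property of a bicategorical coproduct directly. It suffices to show that for every symmetric pseudomonoid $B$ the precomposition functor
\[
 \Psi_B \colon \SymPsMon(\ca{M})(A_0\otimes A_1, B) \longrightarrow \SymPsMon(\ca{M})(A_0,B)\times\SymPsMon(\ca{M})(A_1,B)
\]
along the two coprojections $j_0, j_1$ is an equivalence of categories; pseudonaturality in $B$ is automatic since $\Psi_B$ is built from whiskering. The raw material is that the tensor product $\otimes\colon\ca{M}\times\ca{M}\to\ca{M}$ is a symmetric monoidal pseudofunctor (cf.\ \cite{DAY_STREET}). Applied to the componentwise symmetric pseudomonoid $(A_0,A_1)$ in $\ca{M}\times\ca{M}$ it equips $A_0\otimes A_1$ with the symmetric pseudomonoid structure whose product $p$ is the composite of $p_0\otimes p_1$ with the interchange equivalence $w\colon(A_0\otimes A_1)^{\otimes 2}\simeq(A_0\otimes A_0)\otimes(A_1\otimes A_1)$ assembled from the associators and the symmetry of $\ca{M}$; applied to the componentwise monoidal morphism $(\id_{A_0},i_1)$, and using that the right unit equivalence $A_0\simeq A_0\otimes I$ is monoidal, it shows that $j_0$ is symmetric strong monoidal, and similarly for $j_1$. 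This proves the first assertion of the theorem.

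The essential point --- the categorification of the fact that the multiplication of a commutative monoid is a monoid homomorphism --- is that for every symmetric pseudomonoid $B$ the triple $(B,p_B,i_B)$ is a pseudomonoid in the monoidal bicategory $(\SymPsMon(\ca{M}),\otimes)$. Concretely, $p_B\colon B\otimes B\to B$ (where $B\otimes B$ carries the tensor-product pseudomonoid structure described above) and $i_B\colon I\to B$ are symmetric strong monoidal morphisms, and the associativity and unit constraints of $B$ are monoidal $2$-cells. The content is the construction of the monoidal-morphism constraint for $p_B$, an invertible $2$-cell comparing $p_B$ applied to the product of $B\otimes B$ with $p_B\circ(p_B\otimes p_B)$, built from the associators of $\ca{M}$, its symmetry, and the associativity constraint of $B$; this is precisely where the symmetry of $\ca{M}$ --- equivalently the commutativity of $B$ --- is used, just as in the one-object case. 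Checking that this constraint satisfies the two coherence axioms of a monoidal morphism, together with the analogous, easier statements for $i_B$ and for the constraints of $B$, is a diagram chase using only the pseudomonoid axioms of $B$ and the coherence theorem for symmetric monoidal bicategories. I expect this verification to be the main obstacle of the proof; everything else is comparatively formal.

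Granting this, I would take as candidate quasi-inverse of $\Psi_B$ the functor $\Phi_B(f_0,f_1)=p_B\circ(f_0\otimes f_1)\colon A_0\otimes A_1\to B$, which is a morphism of symmetric pseudomonoids because it is the composite of the monoidal morphism $f_0\otimes f_1$ --- the image under $\otimes$ of the componentwise monoidal morphism $(f_0,f_1)$ --- with the monoidal morphism $p_B$; its action on monoidal $2$-cells is the evident one. Then $\Psi_B\Phi_B\simeq\id$: the first component of $\Psi_B\Phi_B(f_0,f_1)$ is
\[
 p_B\circ(f_0\otimes f_1)\circ j_0 \;\simeq\; p_B\circ(\id_B\otimes i_B)\circ(f_0\otimes\id_I)\circ\rho^{-1} \;\simeq\; f_0 \smash{\rlap{,}}
\]
where the first equivalence uses the unit constraint of $f_1$ and naturality of the right unit equivalence $\rho$ of $\ca{M}$, and the second is the right unit axiom $p_B\circ(\id_B\otimes i_B)\simeq\rho$ of the pseudomonoid $B$; the second component is handled symmetrically, via the left unit axiom. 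Conversely $\Phi_B\Psi_B\simeq\id$: for a monoidal morphism $g\colon A_0\otimes A_1\to B$ one has
\[
 p_B\circ\bigl((g j_0)\otimes(g j_1)\bigr)=p_B\circ(g\otimes g)\circ(j_0\otimes j_1)\;\simeq\; g\circ p\circ(j_0\otimes j_1)
\]
by strong monoidality of $g$, and $p\circ(j_0\otimes j_1)\simeq\id_{A_0\otimes A_1}$ because composing $j_0\otimes j_1$ with the interchange $w$ yields $\bigl((\id_{A_0}\otimes i_0)\circ\rho^{-1}\bigr)\otimes\bigl((i_1\otimes\id_{A_1})\circ\lambda^{-1}\bigr)$, which $p_0\otimes p_1$ collapses to $\id_{A_0\otimes A_1}$ by the right unit axiom of $A_0$ and the left unit axiom of $A_1$ (recall $p=(p_0\otimes p_1)\circ w$). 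It remains to check that all these equivalences are monoidal $2$-cells and natural; since $\Phi_B\Psi_B\cong\id$ and $\Psi_B\Phi_B\cong\id$, each $\Psi_B$ is an equivalence of categories, so $(A_0\otimes A_1, j_0, j_1)$ is the bicategorical coproduct of $A_0$ and $A_1$ in $\SymPsMon(\ca{M})$.
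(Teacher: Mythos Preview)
Your proposal is correct and follows essentially the same route as the paper's Appendix~A: the same pair of functors $\Phi_B$ and $\Psi_B$, the same identification of the symmetric monoidality of $p_B$ (and of $i_B$, $\lambda$, $\rho$) as the crux, and the same round-trip verifications. For the step you flag as the main obstacle, the paper reduces via the coherence theorem of \cite{GORDON_POWER_STREET} to a symmetric Gray monoid and then invokes Lack's coherence theorem for pseudomonoids \cite[Theorem~3.5]{LACK_PSEUDOMONADS} to collapse the associator-and-unitor diagram chase, so that the compatibility of the monoidal structure on $p_B$ with the symmetry reduces to two applications of the defining axiom of a symmetric pseudomonoid.
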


 We defer the proof of this theorem to Appendix~\ref{section:pseudomonoid_coproduct}.

\begin{proof}[Proof of Theorem~\ref{thm:rex_tensor_coproduct}]
 As we have just observed, a right exact symmetric monoidal category is precisely a symmetric pseudomonoid in the symmetric monoidal 2-category $(\Rex, \boxtimes)$. The claim follows immediately from Theorem~\ref{thm:pseudomonoid_coproduct}.
\end{proof}

 Using the above construction of the symmetric monoidal 2-categories $(\Cat_R,\otimes)$ and $(\Rex,\boxtimes)$ we can also show that
\[
 Z \colon \ca{A} \otimes \ca{B} \rightarrow \ca{A} \boxtimes \ca{B}
\]
 is monoidal.

\begin{prop}\label{prop:forgetful_monoidal}
 The forgetful functor $(\Rex,\boxtimes) \rightarrow (\Cat_R,\otimes)$ is symmetric strong monoidal, with structure map isomorphic to the functor
\[
 Z \colon \ca{A} \otimes \ca{B} \rightarrow \ca{A} \boxtimes \ca{B} 
\]
 from Proposition~\ref{prop:rex_tensor}.
\end{prop}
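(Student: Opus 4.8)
The plan is to deduce the proposition from the observation, already made above, that the forgetful functor $U \colon \Rex \to \Cat_R$ is a functor of strict symmetric $\Cat$-multicategories, together with the fact that the symmetric monoidal $2$-category structures on $(\Rex,\boxtimes)$ and $(\Cat_R,\otimes)$ are precisely the ones \emph{induced} by these multicategory structures via the universal properties \eqref{eqn:tensor} and \eqref{eqn:rex_tensor} (and their $n$-ary analogues). Quite generally, a functor between representable symmetric $\Cat$-multicategories acquires a canonical symmetric monoidal pseudofunctor structure between the associated symmetric monoidal $2$-categories; this can be invoked from the theory of representable multicategories, or checked directly from the universal properties. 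Granting this, all that remains is to identify the resulting structure constraints and to verify the coherence conditions, and both reduce to routine comparisons of functors with the same classifying multifunctor.

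First I would exhibit the binary structure constraint. Write $Z_{\ca{A},\ca{B}} \in \Rex[\ca{A},\ca{B};\ca{A}\boxtimes\ca{B}]$ for the universal bilinear right exact functor, so that precomposition with $Z_{\ca{A},\ca{B}}$ realizes the equivalence \eqref{eqn:rex_tensor}. Since $U$ is the identity on objects and forgets only the right exactness of multifunctors, $U(Z_{\ca{A},\ca{B}})$ is simply $Z_{\ca{A},\ca{B}}$ viewed as an object of $[\ca{A},\ca{B};\ca{A}\boxtimes\ca{B}]$, and the isomorphism \eqref{eqn:tensor} identifies it with a unique $R$-linear functor $\ca{A}\otimes\ca{B} \to \ca{A}\boxtimes\ca{B}$. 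By the description of $\ca{A}\boxtimes\ca{B}$ in Proposition~\ref{prop:rex_tensor} this functor is exactly $Z$, the composite of the Yoneda embedding with the reflection into $\Lex_\Sigma[(\ca{A}\otimes\ca{B})^{\op},\Mod_R]$. The unit constraint arises in the same way from the nullary case: it is the comparison functor from the $\otimes$-unit $\ca{I}$ (the one-object $R$-linear category with $\ca{I}(\ast,\ast)=R$) to $U$ of the $\boxtimes$-unit, namely the unary instance of $Z$, i.e.\ the canonical embedding of $\ca{I}$ into the free finitely cocomplete $R$-linear category on one object. Pseudonaturality in $\ca{A}$ and $\ca{B}$ is again a universal-property argument: for right exact $f \colon \ca{A} \to \ca{A}'$ and $g \colon \ca{B} \to \ca{B}'$, both composites $Z \circ (f \otimes g)$ and $(f \boxtimes g) \circ Z$ classify the bilinear right exact functor $(A,B) \mapsto Z(fA, gB)$, so they agree up to canonical isomorphism.

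It then remains to check the associativity, unit, and symmetry coherence axioms for the monoidal pseudofunctor $U$. The key point is that the associator, unitors, and symmetry of $(\Cat_R,\otimes)$ --- and likewise those of $(\Rex,\boxtimes)$ --- are themselves induced by the multicategorical composition through the ternary universal property $[\ca{A}_1,\ca{A}_2,\ca{A}_3;\ca{B}] \cong [\ca{A}_1 \otimes \ca{A}_2 \otimes \ca{A}_3, \ca{B}]$ and its right exact analogue. Since $U$ preserves multicategorical composition, the two legs of each coherence $2$-cell of $U$ classify one and the same multifunctor, obtained by applying $U$ to a universal multimorphism of $\boxtimes$; for the pentagon, for instance, both legs classify $U$ of the universal ternary bilinear right exact functor $\ca{A},\ca{B},\ca{C} \to \ca{A}\boxtimes\ca{B}\boxtimes\ca{C}$. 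I expect the main obstacle to be purely organizational --- namely, setting up the bookkeeping so that the coherence $2$-cells of $U$ are visibly the canonical isomorphisms between functors with the same classifying multimorphism, and confirming that these satisfy the axioms of a symmetric monoidal pseudofunctor. Once the mechanism by which representable multicategories produce monoidal structures is in place (as in \cite[\S\S 6.3--6.5]{KELLY_BASIC}), no genuinely new difficulty appears, and the identification of the structure map with $Z$ is immediate from the universal property of the right exact tensor product.
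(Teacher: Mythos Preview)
Your proposal is correct and follows essentially the same approach as the paper: both observe that the forgetful functor is a strict morphism of symmetric $\Cat$-multicategories and that the monoidal structures on $\Rex$ and $\Cat_R$ arise by transfer from these multicategory structures, whence $U$ inherits a symmetric monoidal pseudofunctor structure; the identification of the structure map with $Z$ then follows from the universal property of the right exact tensor product. The paper's proof is simply a terser version of your argument, compressing the coherence discussion into the phrase ``transfer of structure.''
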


\begin{proof}
 The symmetric monoidal structures on $\Cat_R$ and $\Rex$ are defined by transfer of the respective strict symmetric $\Cat$-multicategory structures along natural isomorphisms, respectively natural equivalences. The forgetful functor
\[
 \Rex \rightarrow \Cat_R
\]
 is strictly compatible with the symmetric $\Cat$-multicategory structure. Thus $U$ becomes symmetric monoidal by transfer of structure. The structure morphism induces the composite
\[
 \Rex[\ca{A}\boxtimes \ca{B},\ca{C}] \simeq \Rex[\ca{A},\ca{B}; \ca{C}] \rightarrow [\ca{A},\ca{B}; \ca{C}] \cong [\ca{A}\otimes \ca{B},\ca{C}] \smash{\rlap{,}}
\]
 which implies that it has the same universal property as $Z$.
\end{proof}

\begin{rmk}\label{rmk:pseudocommutative_monad}
 The above proposition can also be seen as a special case of results of Hyland and Power \cite{HYLAND_POWER} and Franco L\'opez \cite{IGNACIO_PSEUDO_COMMUTATIVE}. Namely, $\Rex$ is the category algebras for a 2-monad of the KZ type, which is symmetric pseudo-commutative by \cite[Theorem~7.3 and Corollary~7.9]{IGNACIO_PSEUDO_COMMUTATIVE}. The induced adjunction between algebras and their underlying objects is symmetric monoidal by \cite[Theorem~13]{HYLAND_POWER}, generalized to the enriched context considered in \cite{IGNACIO_PSEUDO_COMMUTATIVE}.
\end{rmk}

 As a corollary we find that $Z$ is strong monoidal. This implies that $Z$ preserves duals, which we will use to show that the right exact tensor product of two weakly Tannakian categories has enough duals.

\begin{cor}\label{cor:Z_strong_monoidal}
 The forgetful functor $\Rex \rightarrow \Cat_R$ lifts to the categories of pseudomonoids, and the functor
\[
  Z \colon \ca{A} \otimes \ca{B} \rightarrow \ca{A} \boxtimes \ca{B} 
\]
 from Proposition~\ref{prop:rex_tensor} is strong monoidal.
\end{cor}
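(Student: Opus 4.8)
The plan is to deduce both assertions of the corollary from Proposition~\ref{prop:forgetful_monoidal} together with standard facts about symmetric pseudomonoids in a symmetric monoidal bicategory. Recall first that any symmetric strong monoidal pseudofunctor $U \colon (\ca{M},\otimes) \to (\ca{N},\otimes)$ between symmetric monoidal bicategories carries symmetric pseudomonoids to symmetric pseudomonoids: if $(A,p,i)$ is a symmetric pseudomonoid in $\ca{M}$, then $UA$ acquires the product $Up \circ \chi_{A,A}$ and the unit $Ui \circ \iota$, where $\chi$ and $\iota$ denote the monoidal constraints of $U$, and the coherence $2$-cells for this pseudomonoid structure are assembled from those of $A$ and of $U$. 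This assignment extends to a pseudofunctor $\SymPsMon(\ca{M}) \to \SymPsMon(\ca{N})$ lying over $U$ (the bicategorical analogue of the fact that a symmetric lax monoidal functor sends commutative monoids to commutative monoids; cf.\ \cite[\S 4]{MCCRUDDEN_BALANCED}). Applying this to the symmetric strong monoidal forgetful functor $U \colon (\Rex,\boxtimes) \to (\Cat_R,\otimes)$ of Proposition~\ref{prop:forgetful_monoidal} gives the first assertion.

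For the second assertion I would argue as follows. Since $\ca{A}$ and $\ca{B}$ are symmetric pseudomonoids in $(\Rex,\boxtimes)$ and $-\boxtimes-$ is a symmetric monoidal pseudofunctor, the tensor product $\ca{A}\boxtimes\ca{B}$ carries a canonical symmetric pseudomonoid structure in $\Rex$; likewise $\ca{A}\otimes\ca{B} = U\ca{A}\otimes U\ca{B}$ carries the tensor pseudomonoid structure in $(\Cat_R,\otimes)$, and $U(\ca{A}\boxtimes\ca{B})$ carries the pseudomonoid structure produced by the lift of $U$ above. The key point is that the component $\chi_{\ca{A},\ca{B}} \colon U\ca{A}\otimes U\ca{B} \to U(\ca{A}\boxtimes\ca{B})$ of the monoidal constraint of $U$ is a morphism of symmetric pseudomonoids. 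This is the categorification of the elementary fact that, for a symmetric strong monoidal functor $F$ and monoids $A$, $B$, the constraint $FA\otimes FB \to F(A\otimes B)$ is a morphism of monoids; it follows from the compatibility of $\chi$ with the associativity, unit, and symmetry constraints together with the pseudonaturality of $\chi$. Since $\chi_{\ca{A},\ca{B}}$ is isomorphic to $Z$ by Proposition~\ref{prop:forgetful_monoidal}, transporting the strong monoidal structure across this isomorphism shows that $Z$ is strong monoidal.

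The main obstacle is the coherence verification that $\chi_{\ca{A},\ca{B}}$ is a morphism of pseudomonoids: unlike the $1$-categorical case, the relevant pentagon- and triangle-type diagrams commute only up to coherent invertible modifications, so one must carefully produce the comparison $2$-cells from the axioms of a symmetric monoidal pseudofunctor (or invoke a suitable general result). An alternative that partially bypasses this uses Theorem~\ref{thm:pseudomonoid_coproduct}: the coprojections $\ca{A} \to \ca{A}\boxtimes\ca{B} \leftarrow \ca{B}$ are strong monoidal, hence so are their images under the lift of $U$; since $\ca{A}\otimes\ca{B}$ is the bicategorical coproduct of $\ca{A}$ and $\ca{B}$ in $\SymPsMon(\Cat_R,\otimes)$, again by Theorem~\ref{thm:pseudomonoid_coproduct}, these images induce a strong monoidal morphism $\ca{A}\otimes\ca{B} \to U(\ca{A}\boxtimes\ca{B})$, and one checks that its underlying $R$-linear functor is isomorphic to $Z$ by comparing composites with the coprojections. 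Either way the remaining work is routine but notationally heavy, and I expect it to constitute the bulk of the argument.
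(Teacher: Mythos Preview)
Your proposal is correct and follows essentially the same line as the paper's proof: the first assertion comes from the fact that symmetric monoidal pseudofunctors lift to pseudomonoids, and the second from the fact that the monoidal constraint $\chi$ (equivalently, the pseudonatural transformation $Z$) is a morphism of pseudomonoids. The paper dispatches your ``main obstacle'' by citing \cite[Proposition~15 and p.~125]{DAY_STREET} directly rather than spelling out the coherence verification, so what you anticipated as ``routine but notationally heavy'' work is in fact absorbed into a reference; your alternative route via Theorem~\ref{thm:pseudomonoid_coproduct} is not used in the paper but is a valid independent argument.
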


\begin{proof}
 Symmetric monoidal pseudofunctors lift to the categories of pseudomonoids by \cite[Proposition~15]{DAY_STREET}. Moreover, the pseudonatural transformation
\[
 \xymatrix{\Rex \dtwocell\omit{^<-9> Z} \times \Rex \ar[r]^-{\boxtimes} \ar[d] & \Rex \ar[d] \\ \Cat_R \times \Cat_R \ar[r]_-{\otimes} & \Cat_R}
\]
 is a monoidal pseudonatural transformation (see \cite[p.~125]{DAY_STREET}). Therefore it lifts to the category of pseudomonoids, that is, it is naturally endowed with the structure of a strong monoidal functor.
\end{proof}

 We will use the following proposition to show that the right exact tensor product of two fiber functors is a fiber functor.

\begin{prop}\label{prop:tensor_of_modules}
 Let $A$ and $B$ be two commutative $R$-algebras. Then there is an equivalence
\[
 \Mod_A^{\fp} \boxtimes \Mod_B^{\fp} \simeq \Mod_{A\otimes B}^{\fp}
\]
 of symmetric monoidal $R$-linear categories. Moreover, the composite
\[
\xymatrix{ \Mod_A^{\fp} \otimes \Mod_B^{\fp} \ar[r]^-{Z} & \Mod_A^{\fp} \boxtimes  \Mod_B^{\fp} \ar[r]^-{ \simeq } & \Mod_{A\otimes B}^{\fp} }
\]
 is naturally isomorphic to the functor which sends $(M,N)$ to $M\otimes N$.
\end{prop}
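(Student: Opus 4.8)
The plan is to prove the equivalence by transporting the universal property of the right exact tensor product through the free-cocompletion picture. First I would recall that $\Mod_A^{\fp}$ is the free finitely cocomplete $R$-linear category on the one-object $R$-linear category $A$ (viewed as a one-object $R$-algebra), up to the caveat about regular epimorphisms; more precisely, a right exact $R$-linear functor $\Mod_A^{\fp} \to \ca{C}$ into a finitely cocomplete $R$-linear category is determined by where it sends the generator $A$ together with its module structure, so that $\Rex[\Mod_A^{\fp},\ca{C}]$ is equivalent to the category of pairs consisting of an object of $\ca{C}$ together with an $R$-algebra map $A \to \End_{\ca{C}}(\text{that object})$. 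Using this description together with the defining universal property $\Rex[\Mod_A^{\fp}\boxtimes \Mod_B^{\fp},\ca{C}] \simeq \Rex[\Mod_A^{\fp},\Rex[\Mod_B^{\fp},\ca{C}]]$, I would unwind both sides and observe that an object of the right-hand category is the same data as an object $C$ of $\ca{C}$ equipped with commuting $R$-algebra maps from $A$ and from $B$, i.e.\ an $R$-algebra map $A\otimes_R B \to \End_{\ca{C}}(C)$, which is exactly $\Rex[\Mod_{A\otimes B}^{\fp},\ca{C}]$. Since these equivalences are natural in $\ca{C}$, the bicategorical Yoneda lemma gives the desired equivalence $\Mod_A^{\fp}\boxtimes \Mod_B^{\fp}\simeq \Mod_{A\otimes B}^{\fp}$.

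Next I would check that this equivalence is symmetric monoidal. Here the cleanest route is to use the material of \S\ref{section:rex_coproduct}: each of $\Mod_A^{\fp}$, $\Mod_B^{\fp}$, $\Mod_{A\otimes B}^{\fp}$ is a right exact symmetric monoidal category, i.e.\ a symmetric pseudomonoid in $(\Rex,\boxtimes)$, and by Theorem~\ref{thm:rex_tensor_coproduct} the object $\Mod_A^{\fp}\boxtimes \Mod_B^{\fp}$ is the bicategorical coproduct of $\Mod_A^{\fp}$ and $\Mod_B^{\fp}$ in the bicategory of right exact symmetric monoidal $R$-linear categories. On the other hand $\Mod_{A\otimes B}^{\fp}$ is also such a coproduct: the base-change functors $\Mod_A^{\fp} \to \Mod_{A\otimes B}^{\fp}$ and $\Mod_B^{\fp}\to\Mod_{A\otimes B}^{\fp}$ are symmetric strong monoidal, and the fact that $A\otimes_R B$ is the coproduct of $A$ and $B$ in commutative $R$-algebras upgrades (via the same free-cocompletion argument, now tracking monoidal structure) to the statement that $\Mod_{A\otimes B}^{\fp}$ has the universal property of the coproduct. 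Comparing the two coproduct cocones gives a symmetric strong monoidal equivalence, and by the uniqueness of maps out of a coproduct it is compatible with the structure maps from the factors.

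Finally, the claim about the composite with $Z$: since $Z$ is the universal bilinear right exact functor (Proposition~\ref{prop:rex_tensor}) and is strong monoidal by Corollary~\ref{cor:Z_strong_monoidal}, the composite $\Mod_A^{\fp}\otimes \Mod_B^{\fp}\to\Mod_{A\otimes B}^{\fp}$ is a bilinear functor which is right exact in each variable and which, on the pair of generators $(A,B)$, must send $(A,B)$ to an object carrying the canonical $A\otimes_R B$-action, namely $A\otimes_R B$ itself; since such a bilinear right exact functor is determined up to isomorphism by its value on generators, it is isomorphic to $(M,N)\mapsto M\otimes_R N$. I expect the main obstacle to be the bookkeeping in the first step: making precise and correctly stated the universal property ``$\Mod_A^{\fp}$ is freely generated under finite colimits by a single object with an $A$-action,'' being careful that $\Mod_A^{\fp}$ is not literally the free finite-colimit completion of a point (the kernels/regular epis subtlety in Kelly's construction), and verifying that all the identifications are natural in $\ca{C}$ so that the bicategorical Yoneda argument applies cleanly.
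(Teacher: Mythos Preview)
Your proposal is correct, and your second paragraph is essentially the paper's argument: the paper establishes the symmetric monoidal equivalence directly by chaining Theorem~\ref{thm:pseudomonoid_coproduct} (applied once in $\Rex$ and once in $\Cat_R$) with the equivalence $\SymPsMon(\Cat_R)[A,\ca{C}] \simeq \SymPsMon(\Rex)[\Mod_A^{\fp},\ca{C}]$, which is exactly what you call ``the same free-cocompletion argument, now tracking monoidal structure.'' Your first paragraph---the non-monoidal equivalence via $A$-objects in $\ca{C}$---is correct but redundant, since the monoidal argument of your second paragraph already yields the underlying equivalence; the paper never separates these steps.

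For the identification of the composite with $Z$, the paper argues differently. Rather than invoking ``determined by value on generators,'' it observes that $\Mod_A^{\fp}\otimes\Mod_B^{\fp}$ is itself a bicategorical coproduct in $\SymPsMon(\Cat_R)$ (again by Theorem~\ref{thm:pseudomonoid_coproduct}), so that two symmetric strong monoidal functors out of it agree once they agree after composing with the two coproduct inclusions; this reduces to the fact that the forgetful functor $\Rex\to\Cat_R$ is symmetric monoidal (Proposition~\ref{prop:forgetful_monoidal}), so $Z$ is compatible with the inclusions. Your generators argument is also valid but requires making the ``determined up to isomorphism by its value on generators'' claim precise for bilinear right exact functors, whereas the paper's route stays entirely within the coproduct formalism and avoids that verification.
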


\begin{proof}
 First note that for a right exact symmetric monoidal category $\ca{C}$, there is a natural equivalence
\[
 \SymPsMon(\Cat_R)[A,\ca{C}] \simeq \SymPsMon(\Rex)[\Mod_A^{\fp},\ca{C}]
\]
 between the category of symmetric strong symmetric monoidal functors $A \rightarrow \ca{C}$ and the category of right exact strong symmetric monoidal functors $\Mod_A^{\fp} \rightarrow \ca{C}$. This follows from the abstract framework developed by Hyland, Power and L\'opez Franco mentioned in Remark~\ref{rmk:pseudocommutative_monad}. It can also be seen directly, following the proof of \cite[Theorem~5.1]{IM_KELLY}.

 Applying this and Theorem~\ref{thm:pseudomonoid_coproduct} twice, we get an equivalence
\begin{align*}
 \SymPsMon&(\Rex)[\Mod_A^{\fp}\boxtimes \Mod_B^{\fp},\ca{C}] \\
& \simeq \SymPsMon(\Rex)[\Mod_A^{\fp},\ca{C}] \times \SymPsMon(\Rex)[\Mod_B^{\fp},\ca{C}] \\
&\simeq \SymPsMon(\Cat_R)[A,\ca{C}] \times \SymPsMon(\Cat_R)[B,\ca{C}] \\
& \simeq \SymPsMon(\Cat_R)[A\otimes B, \ca{C}] \\
& \simeq \SymPsMon(\Rex)[\Mod_{A\otimes B}^{\fp},\ca{C}]
\end{align*}
 which is natural in $\ca{C}$. By Yoneda we get the desired equivalence
\[
\Mod_A^{\fp} \boxtimes \Mod_B^{\fp} \simeq \Mod_{A\otimes B}^{\fp}
\]
 of symmetric monoidal categories. Note that what we have proved is a special case of the fact that a left biadjoint functor preserves bicategorical coproducts. In particular, the above equivalence is given by the essentially unique strong symmetric monoidal functor which is compatible with the two respective coproduct inclusions.

 To finish the proof it suffices to check that the composite
\[
 \xymatrix{\Mod_A^{\fp} \otimes \Mod_A^{\fp} \ar[r]^{Z} & \Mod_A^{\fp} \boxtimes \Mod_A^{\fp} \ar[r]^-{\simeq} & \Mod_{A\otimes B}^{\fp}}
\]
 is isomorphic to the strong symmetric monoidal functor
\[
 \Mod_A^{\fp} \otimes \Mod_A^{\fp} \rightarrow \Mod_{A\otimes B}^{\fp} 
\]
 which sends $(M,N)$ to $M\otimes N$. Since the domain is a bicategorical coproduct in the category of symmetric monoidal $R$-linear categories and strong symmetric monoidal $R$-linear functors (see Theorem~\ref{thm:pseudomonoid_coproduct}), it suffices to check that the two composites with the coproduct inclusions are naturally isomorphic. We have already noted that the equivalence
\[
 \Mod_A^{\fp} \boxtimes \Mod_B^{\fp} \simeq \Mod_{A\otimes B}^{\fp}
\]
 is compatible with the coproduct inclusions by construction. The functor $Z$ commutes with the coproduct inclusions (up to isomorphism) because the forgetful functor
\[
 \Rex \rightarrow \Cat_R
\]
 is symmetric monoidal (see Proposition~\ref{prop:forgetful_monoidal}).
\end{proof}

\section{Tensor products of weakly Tannakian categories}\label{section:kelly_tensor}

 The goal of this section is to show that weakly Tannakian categories are closed under the Kelly tensor product. Throughout this section, we fix two weakly Tannakian $R$-linear categories $\ca{A}$ and $\ca{B}$, with fiber functors
\[
 w\colon \ca{A} \rightarrow \Mod_A \quad \text{and} \quad v \colon \ca{B} \rightarrow \Mod_B
\]
 respectively.

 Recall from \S \ref{section:rex_coproduct} that a right exact monoidal category is precisely a pseudomonoid in the monoidal 2-category $(\Rex,\boxtimes)$. This implies that $\ca{A} \boxtimes \ca{B}$ is a right exact symmetric monoidal category, and that
\[
 w\boxtimes v \colon \ca{A} \boxtimes \ca{B} \rightarrow \Mod_A^{\fp} \boxtimes \Mod_B^{\fp}
\]
 is a strong symmetric monoidal right exact functor.

 From Proposition~\ref{prop:tensor_of_modules} we know that $\Mod_A^{\fp} \boxtimes  \Mod_A^{\fp} \simeq  \Mod_{A\otimes B}^{\fp}$. Therefore $w \boxtimes v$ is a natural candidate for a fiber functor of $\ca{A} \boxtimes \ca{B}$. It is right exact by construction, so we only need to check that it is faithful and flat. In order to do this we use the construction of the right exact tensor product described in Proposition~\ref{prop:rex_tensor}. In addition to the notation introduced there we also use the abbreviation
\[
 \Lex_{\Sigma} = \Lex_{\Sigma}[(\ca{A}\otimes \ca{B})^{\op},\Mod_R]
\]
 and we write
\[
 \xymatrix{ \ca{A} \boxtimes \ca{B} \ar[r]^-{J} & \Lex_{\Sigma} \ar[r]^-{I} & [(\ca{A}\otimes \ca{B})^{\op},\Mod_R] }
\]
 for the natural inclusions. Given a commutative $R$-algebra $A$, we let
\[
U \colon \Mod_A \rightarrow \Mod_R  \smash{\rlap{,}}
\]
 be the forgetful functor. We let
\[
 L \colon  \Lex_{\Sigma} \rightarrow \Mod_R 
\]
 be the restriction of $\Lan_Y Uw\otimes Uv$ to $\Lex_{\Sigma}$.

 We briefly outline the proof strategy. Instead of proving that $w\boxtimes v$ is faithful directly, we give an expression of $w\boxtimes v$ in terms of the more familiar functor $L$ (see Lemmas~\ref{lemma:L_left_adjoint} and~\ref{lemma:fiber_functor_lan}). We then use the explicit formula for computing Kan extensions from \cite{KELLY_BASIC} to show that $L$ is conservative and exact (see Lemmas~\ref{lemma:flat_lan} and~\ref{lemma:L_lex_conservative}). This is the key fact needed for showing that $\ca{A}\boxtimes \ca{B}$ is ind-abelian and that $w\boxtimes v$ is a fiber functor.

\begin{lemma}\label{lemma:L_left_adjoint}
 There is a natural isomorphism $\Lan_Y Uw\otimes Uv \cong L R$, and the functor $L$ is a left adjoint.
\end{lemma}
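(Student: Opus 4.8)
The plan is to recognize $L$ as (the corestriction of) a left adjoint and then deduce both assertions by uniqueness of adjoints. Since $L=\Lan_Y(Uw\otimes Uv)\circ I$ by definition, it suffices to produce a right adjoint of $L$. First I would recall the standard fact that, for an $R$-linear functor into $\Mod_R$, left Kan extension along the Yoneda embedding is a left adjoint; thus $\Lan_Y(Uw\otimes Uv)$ is a left adjoint, with right adjoint $\widetilde{Uw\otimes Uv}\colon\Mod_R\to[(\ca{A}\otimes\ca{B})^{\op},\Mod_R]$ sending $M$ to the presheaf $\Mod_R\bigl((Uw\otimes Uv)(-),M\bigr)$ (see \cite[\S\S 4-5]{KELLY_BASIC}).

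The substantive step is to show that $\widetilde{Uw\otimes Uv}$ factors through the reflective inclusion $I\colon\Lex_\Sigma\hookrightarrow[(\ca{A}\otimes\ca{B})^{\op},\Mod_R]$. For a sequence in $\Sigma$ of the form $(A,B)\to(A',B)\to(A'',B)\to 0$ coming from a right exact sequence $A\to A'\to A''\to 0$ in $\ca{A}$, I would argue as follows: since the fiber functor $w$ is right exact, $Uw(A)\to Uw(A')\to Uw(A'')\to 0$ is right exact in $\Mod_R$; tensoring over $R$ with $Uv(B)$ preserves right exactness; and $\Mod_R(-,M)$ turns a right exact sequence into a left exact one, so $\widetilde{Uw\otimes Uv}(M)$ is left exact on this sequence. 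The sequences coming from $\ca{B}$ are handled symmetrically, using right exactness of $v$ and of $Uw(A)\otimes_R(-)$. Hence $\widetilde{Uw\otimes Uv}=I\circ G'$ for a necessarily unique functor $G'\colon\Mod_R\to\Lex_\Sigma$.

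From this I would conclude that $L\dashv G'$: since $I$ is fully faithful and $\widetilde{Uw\otimes Uv}(M)=I(G'M)$, for $H\in\Lex_\Sigma$ and $M\in\Mod_R$ one has natural isomorphisms
\begin{align*}
 \Mod_R(LH,M) &= \Mod_R\bigl(\Lan_Y(Uw\otimes Uv)(IH),M\bigr) \\
 &\cong [(\ca{A}\otimes\ca{B})^{\op},\Mod_R]\bigl(IH,\widetilde{Uw\otimes Uv}(M)\bigr) \\
 &\cong \Lex_\Sigma(H,G'M).
\end{align*}
This establishes the second assertion. For the first, composing the adjunctions $R\dashv I$ and $L\dashv G'$ gives $LR\dashv IG'=\widetilde{Uw\otimes Uv}$; since $\Lan_Y(Uw\otimes Uv)$ has this same right adjoint, and a left adjoint is determined up to canonical isomorphism by a right adjoint, we get $\Lan_Y(Uw\otimes Uv)\cong LR$. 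Equivalently, the comparison $\Lan_Y(Uw\otimes Uv)\Rightarrow\Lan_Y(Uw\otimes Uv)\circ IR=LR$ induced by the unit of $R\dashv I$ is invertible, because the cocontinuous functor $\Lan_Y(Uw\otimes Uv)$ inverts every morphism that the reflector $R$ inverts (it inverts the generating maps of the localization by the same right exactness computation).

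I do not expect a real obstacle. The only genuine content is the right exactness computation placing $\widetilde{Uw\otimes Uv}$ inside $\Lex_\Sigma$ --- which uses precisely that $w$, $v$, and $-\otimes_R-$ are right exact --- and the rest is formal manipulation of the adjunctions $R\dashv I$ and $\Lan_Y(Uw\otimes Uv)\dashv\widetilde{Uw\otimes Uv}$. The one point to watch is the direction of the reflection ($R\dashv I$, with $I$ fully faithful), which is what makes the composite adjunction come out as $LR\dashv IG'$ rather than the reverse.
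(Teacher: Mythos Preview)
Your proposal is correct and follows essentially the same approach as the paper: both arguments hinge on showing that the right adjoint $\widetilde{Uw\otimes Uv}$ factors through $\Lex_\Sigma$ (using that $Uw\otimes Uv$ sends sequences in $\Sigma$ to right exact sequences), and then both conclusions follow formally from the adjunctions $R\dashv I$ and $\Lan_Y(Uw\otimes Uv)\dashv\widetilde{Uw\otimes Uv}$. The paper phrases the first conclusion via the general fact that a left adjoint inverts the reflection unit whenever its right adjoint lands in the reflective subcategory, which is exactly the ``equivalently'' reformulation you give at the end.
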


\begin{proof}
 It is a general fact that if a right adjoint factors through a reflective subcategory, then the left adjoint sends the unit of the reflection to an isomorphism. This follows for example from the fact that the unit whiskered with the left adjoint is the mate of an identity natural transformation.

 Thus, to show that
\[
(\Lan_Y Uw \otimes Uv) \eta \colon \Lan_Y Uw \otimes Uv \Rightarrow (\Lan_Y Uw \otimes Uv) IR=L R
\]
 is an isomorphism, it suffices to check that the right adjoint of $\Lan_Y Uw \otimes Uv$ factors through $\Lex_{\Sigma}$. This follows from the fact that $Uw \otimes Uv$ sends sequences in $\Sigma$ to right exact sequences. 
\end{proof}

\begin{lemma}\label{lemma:fiber_functor_lan}
 The diagram
\[
\xymatrix{\ca{A} \boxtimes \ca{B} \ar[rr]^-{J} \ar[d]_{w\boxtimes v} && \Lex_{\Sigma}  \ar[d]^{L} \\ \Mod_A^{\fp} \boxtimes \Mod_B^{\fp} \ar[r]^-{\simeq} & \Mod_{A\otimes B}^{\fp} \ar[r]^{U} & \Mod_R} 
\]
 commutes up to natural isomorphism.
\end{lemma}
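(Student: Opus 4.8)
The plan is to reduce the claim to a computation on representable objects by precomposing both composites with the universal bilinear right exact functor $Z\colon\ca{A}\otimes\ca{B}\to\ca{A}\boxtimes\ca{B}$ of Proposition~\ref{prop:rex_tensor}. First I would check that both composites are right exact functors $\ca{A}\boxtimes\ca{B}\to\Mod_R$. The functor $L$ is a left adjoint by Lemma~\ref{lemma:L_left_adjoint}, hence cocontinuous, and $J$ preserves finite colimits because $\ca{A}\boxtimes\ca{B}$ is by construction the closure of a full subcategory of $\Lex_\Sigma$ under finite colimits, so that finite colimits in $\ca{A}\boxtimes\ca{B}$ agree with those in $\Lex_\Sigma$; thus $L\circ J$ is right exact. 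On the other side, $w\boxtimes v$ is right exact by construction, the equivalence $e\colon\Mod_A^{\fp}\boxtimes\Mod_B^{\fp}\simeq\Mod_{A\otimes B}^{\fp}$ of Proposition~\ref{prop:tensor_of_modules} is right exact, and the forgetful functor $U\colon\Mod_{A\otimes B}^{\fp}\to\Mod_R$ is right exact, so $U\circ e\circ(w\boxtimes v)$ is right exact. Since precomposition with $Z$ exhibits $\Rex$-valued hom-categories out of $\ca{A}\boxtimes\ca{B}$ as (equivalent to) the full subcategory of functors out of $\ca{A}\otimes\ca{B}$ that are right exact in each variable, it is fully faithful and therefore reflects isomorphisms; hence it suffices to produce a natural isomorphism between $L\circ J\circ Z$ and $U\circ e\circ(w\boxtimes v)\circ Z$.

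Next I would identify each of these two restrictions with the $R$-linear functor $Uw\otimes Uv\colon\ca{A}\otimes\ca{B}\to\Mod_R$ sending $(A,B)$ to $Uw(A)\otimes_R Uv(B)$. For the first, recall from Proposition~\ref{prop:rex_tensor} that $Z$ is the corestriction of $R\circ Y$, so $J\circ Z=R\circ Y$ and hence $L\circ J\circ Z=L\circ R\circ Y$; by Lemma~\ref{lemma:L_left_adjoint} this is isomorphic to $\Lan_Y(Uw\otimes Uv)\circ Y$, which is isomorphic to $Uw\otimes Uv$ because $Y$ is fully faithful. For the second, I would use that $w\boxtimes v$ comes, by construction, with a natural isomorphism $(w\boxtimes v)\circ Z\cong Z'\circ(w\otimes v)$, where $Z'\colon\Mod_A^{\fp}\otimes\Mod_B^{\fp}\to\Mod_A^{\fp}\boxtimes\Mod_B^{\fp}$ is the universal functor and $w\otimes v$ sends $(A,B)$ to $(wA,vB)$ (this is the pseudofunctoriality of $-\boxtimes-$ together with the pseudonaturality of $Z$, cf.\ Proposition~\ref{prop:forgetful_monoidal} and Corollary~\ref{cor:Z_strong_monoidal}). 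By the second half of Proposition~\ref{prop:tensor_of_modules} the composite $e\circ Z'$ sends $(M,N)$ to $M\otimes_R N$, so $U\circ e\circ(w\boxtimes v)\circ Z$ sends $(A,B)$ to $U(wA\otimes_R vB)$, which is naturally the $R$-module $Uw(A)\otimes_R Uv(B)$. This matches the first computation, so the two restrictions are naturally isomorphic and the proof is complete.

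The routine parts here are the right exactness checks and the final identification $U(wA\otimes_R vB)\cong Uw(A)\otimes_R Uv(B)$. The point that demands care — and which I expect to be the main obstacle — is the bookkeeping needed to guarantee that all of these isomorphisms are genuinely comparisons of the \emph{same} functor restricted along $Z$: one must invoke the identification of $w\boxtimes v$ as the right exact functor induced by $Z'\circ(w\otimes v)$ (equivalently, the pseudonaturality square for $Z$ appearing in the proof of Corollary~\ref{cor:Z_strong_monoidal}) together with the compatibility of the equivalence $\Mod_A^{\fp}\boxtimes\Mod_B^{\fp}\simeq\Mod_{A\otimes B}^{\fp}$ with the two relevant universal functors, which is precisely the content of the second statement of Proposition~\ref{prop:tensor_of_modules}.
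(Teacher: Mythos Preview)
Your proposal is correct and follows essentially the same approach as the paper: reduce via the universal property of $Z$ to comparing the two composites after precomposing with $Z$, then identify $LJZ\cong Uw\otimes Uv$ via Lemma~\ref{lemma:L_left_adjoint} and identify the lower composite with $Uw\otimes Uv$ via Proposition~\ref{prop:tensor_of_modules}. The paper's version is terser (it simply asserts that all functors are right exact and invokes \cite[Theorem~6.23]{KELLY_BASIC} for the reduction step), but your added detail on right exactness and on the pseudonaturality square $(w\boxtimes v)\circ Z\cong Z'\circ(w\otimes v)$ is exactly what underlies the paper's compressed argument.
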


\begin{proof}
 Since all the functors involved are right exact, we can use the universal property of $Z \colon \ca{A}\otimes \ca{B} \rightarrow \ca{A} \boxtimes \ca{B}$, that is, it suffices to check that the outer rectangle of the diagram
\[
\xymatrix@!C=70pt{\ca{A} \otimes \ca{B} \ar[r]^{Z} \ar[d]_{w\otimes v} \ar@{}[rd]|{\cong} & \ca{A} \boxtimes \ca{B} \ar[rr]^-{J} \ar[d]_{w\boxtimes v} && \Lex_{\Sigma} \ar[d]^{L} \\ 
 \Mod_A^{\fp} \otimes \Mod_B^{\fp} \ar[r]^{Z} & \Mod_A^{\fp} \boxtimes \Mod_B^{\fp} \ar[r]^-{\simeq} & \Mod_{A\otimes B}^{\fp} \ar[r]^{U} & \Mod_R}  
\]
 commutes up to isomorphism (cf.\ \cite[Theorem~6.23]{KELLY_BASIC}). By Lemma~\ref{lemma:L_left_adjoint} we have
\[
 LJZ=LRY\cong (\Lan_Y Uw\otimes Uv) Y \cong Uw\otimes Uv \smash{\rlap{.}}
\]
 Using Proposition~\ref{prop:tensor_of_modules} we find that the lower composite is isomorphic to $Uw\otimes Uv$ as well.
\end{proof}

 \begin{lemma}\label{lemma:flat_lan}
 Let $\ca{C}$ and $\ca{D}$ be a small $R$-linear categories. If $F \colon \ca{C} \rightarrow \Mod_R$ is a filtered colimit
\[
 F \cong \colim^{i \in \ca{I}} \ca{C}(c_i,-)
\]
 of representable functors, then there is an isomorphism
\begin{equation}\label{eqn:flat_lan}
 \Lan_Y F (H) \cong \colim^{i \in \ca{I}} H(c_i) 
\end{equation}
 natural in $H \in [\ca{C}^{\op},\Mod_R]$. If $G \colon \ca{D} \rightarrow \Mod_R$ is also given by a filtered colimit
\[
 G \cong \colim^{j \in \ca{J}} \ca{D}(d_j,-)
\]
 of representable functors, then there is an isomorphism
\begin{equation}\label{eqn:flat_tensor_lan}
 \Lan_Y F\otimes G (H) \cong \colim^{(i,j) \in \ca{I} \times \ca{J}} H(c_i,d_j) 
\end{equation}
 natural in $H \in [(\ca{C} \otimes \ca{D})^{\op},\Mod_R]$.
 \end{lemma}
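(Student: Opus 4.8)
The plan is to identify $\Lan_Y F$ with the tensor product of functors and then transport the given colimit presentation of $F$ through it by cocontinuity. By Kelly's explicit formula for pointwise left Kan extensions \cite[\S 4]{KELLY_BASIC}, which applies since $\ca{C}$ is small and $\Mod_R$ is cocomplete, for every $R$-linear functor $G \colon \ca{C} \rightarrow \Mod_R$ and every $H \in [\ca{C}^{\op},\Mod_R]$ there is an isomorphism
\[
 \Lan_Y G(H) \;\cong\; \int^{c \in \ca{C}} [\ca{C}^{\op},\Mod_R]\bigl(\ca{C}(-,c),\,H\bigr) \otimes G(c) \;\cong\; \int^{c \in \ca{C}} H(c) \otimes G(c) \smash{\rlap{,}}
\]
natural in $H$, the second isomorphism being the $R$-linear Yoneda lemma; this coend is the ``tensor product of functors'' $H \otimes_{\ca{C}} G$. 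I would first record two properties of this expression: (a) it is cocontinuous in the variable $G$, since coends commute with colimits, $\otimes$ is cocontinuous in each variable, and colimits of $R$-linear functors into $\Mod_R$ are computed pointwise; and (b) when $G = \ca{C}(c_0,-)$ is representable, the density (co-Yoneda) formula $H(c_0) \cong \int^{c} \ca{C}(c_0,c) \otimes H(c)$ gives $\Lan_Y\bigl(\ca{C}(c_0,-)\bigr)(H) \cong H(c_0)$, naturally in $H$.

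Given $F \cong \colim^{i \in \ca{I}} \ca{C}(c_i,-)$, properties (a) and (b) then yield
\[
 \Lan_Y F(H) \;\cong\; \colim^{i \in \ca{I}} \Lan_Y\bigl(\ca{C}(c_i,-)\bigr)(H) \;\cong\; \colim^{i \in \ca{I}} H(c_i) \smash{\rlap{,}}
\]
and since every isomorphism used is natural in $H$, so is the composite; this proves~\eqref{eqn:flat_lan}.

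For~\eqref{eqn:flat_tensor_lan} the plan is to reduce to~\eqref{eqn:flat_lan} applied to the $R$-linear category $\ca{C} \otimes \ca{D}$. Since $\otimes$ is cocontinuous in each variable and an iterated colimit over $\ca{I}$ and then over $\ca{J}$ agrees with the colimit over the category $\ca{I} \times \ca{J}$ (which is again filtered), evaluating at $(c,d)$ gives
\[
 (F \otimes G)(c,d) = F(c) \otimes G(d) \;\cong\; \colim^{(i,j) \in \ca{I} \times \ca{J}} \bigl(\ca{C}(c_i,c) \otimes \ca{D}(d_j,d)\bigr) \smash{\rlap{.}}
\]
By the definition of the tensor product of $R$-linear categories one has $\ca{C}(c_i,c) \otimes \ca{D}(d_j,d) = (\ca{C} \otimes \ca{D})\bigl((c_i,d_j),(c,d)\bigr)$, so $F \otimes G$ is the filtered colimit of the representable functors $(\ca{C} \otimes \ca{D})\bigl((c_i,d_j),-\bigr)$ indexed by $\ca{I} \times \ca{J}$; applying~\eqref{eqn:flat_lan} with $\ca{C} \otimes \ca{D}$ in place of $\ca{C}$ then gives $\Lan_Y(F \otimes G)(H) \cong \colim^{(i,j) \in \ca{I} \times \ca{J}} H(c_i,d_j)$, naturally in $H$.

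This argument is essentially bookkeeping rather than a single hard step. The points that need a little care are the cocontinuity-in-$G$ statement (a) and the verification that each displayed isomorphism is natural in $H$, since these are precisely what let the colimit presentations of $F$ and of $F \otimes G$ pass through the Kan extension; one must also keep the size and cocompleteness hypotheses ($\ca{C}$, $\ca{D}$ small, $\Mod_R$ cocomplete) in view, as these guarantee that the coends and colimits exist and that Kelly's formula is available.
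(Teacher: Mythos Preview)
Your proof is correct and follows essentially the same approach as the paper: both use the coend formula $\Lan_Y F(H) \cong \int^{c} H(c) \otimes F(c)$, commute the filtered colimit past the coend and tensor, and invoke co-Yoneda. For~\eqref{eqn:flat_tensor_lan} the paper redoes the coend computation directly using Fubini and two applications of Yoneda, whereas you more economically observe that $F \otimes G$ is itself a filtered colimit of representables on $\ca{C} \otimes \ca{D}$ and apply~\eqref{eqn:flat_lan}; these amount to the same manipulation.
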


\begin{proof}
 Since $\ca{C}$ is small, we have
\[
 \Lan_Y F(H)\cong \int^{\ca{C}} H(c) \otimes F(c)
\]
 (see \cite[Formulas~(4.31) and (3.70)]{KELLY_BASIC}). Using the assumption on $F$, the fact that colimits commute with each other, and the Yoneda lemma, we find that
\begin{align*}
 \int^{\ca{C}} H(c) \otimes F(c) & \cong \int^{\ca{C}} G(c) \otimes \colim^{i \in \ca{I}} \ca{C}(c_i,c) \\  
 & \cong  \colim^{i \in \ca{I}} \int^{\ca{C}} H(c) \otimes \ca{C}(c_i,c) \\
 & \cong \colim^{i \in \ca{I}} H(c_i) \smash{\rlap{,}}
\end{align*}
 which shows that \eqref{eqn:flat_lan} holds. Similarly we find that
\begin{align*}
 \Lan_Y F\otimes G (H) & \cong \int^{\ca{C} \otimes \ca{D}} H(c,d) \otimes F(c) \otimes G(d) \\
 &\cong \int^{\ca{C} \otimes \ca{D}} H(c,d) \otimes \colim^{i \in \ca{I}} \ca{C}(c_i,c) \otimes \colim^{j \in \ca{J}} \ca{D}(d_j,d) \\
 &\cong \colim^{(i,j)\in \ca{I} \times \ca{J}} \int^{\ca{C}\otimes \ca{D}} H(c,d) \otimes \ca{C}(c_i,c) \otimes \ca{D}(d_j,d) \smash{\rlap{.}}
\end{align*}
 Using the Fubini Theorem for coends and the Yoneda lemma we get
\begin{align*}
 \int^{\ca{C}\otimes \ca{D}} H(c,d) \otimes \ca{C}(c_i,c) \otimes \ca{D}(d_i,d)  
& \cong \int^{\ca{C}} \int^{\ca{D}} H(c,d) \otimes \ca{C}(c_i,c) \otimes \ca{D}(d_j,d) \\
&\cong \int^{\ca{C}} H(c,d_j) \otimes \ca{C}(c_i,c) \\
&\cong H(c_i,d_j) \smash{\rlap{,}}
\end{align*}
which shows that \eqref{eqn:flat_tensor_lan} holds.
\end{proof}

\begin{lemma}\label{lemma:L_lex_conservative}
 The left adjoint $L \colon \Lex_{\Sigma} \rightarrow \Mod_R$ is exact and conservative.
\end{lemma}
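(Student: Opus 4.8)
The plan is to describe $L$ explicitly as a filtered colimit of evaluation functors and then to feed in the recognition results of \S\ref{section:weakly_tannakian} applied to $\ca{A}$ and $\ca{B}$ separately. Since the fiber functors $w$ and $v$ are flat, Proposition~\ref{prop:flat_characterization} gives filtered colimit presentations $Uw\cong\colim^{i\in\ca{I}}\ca{A}(a_i,-)$ and $Uv\cong\colim^{j\in\ca{J}}\ca{B}(b_j,-)$ by representable functors. Since $\otimes$ is cocontinuous in each variable and $\ca{A}(a,-)\otimes\ca{B}(b,-)\cong(\ca{A}\otimes\ca{B})\bigl((a,b),-\bigr)$, the functor $Uw\otimes Uv$ is again a filtered colimit of representables on $\ca{A}\otimes\ca{B}$, hence flat; Lemma~\ref{lemma:flat_lan} then yields $\Lan_Y(Uw\otimes Uv)(H)\cong\colim^{(i,j)\in\ca{I}\times\ca{J}}H(a_i,b_j)$ naturally in $H$, and restricting along $I$ gives the formula $L(F)\cong\colim^{(i,j)}F(a_i,b_j)$ that drives both halves of the lemma.

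For exactness, $L$ preserves all colimits because it is a left adjoint (Lemma~\ref{lemma:L_left_adjoint}); for finite limits I would observe that $\Lex_{\Sigma}$ is closed under limits in $[(\ca{A}\otimes\ca{B})^{\op},\Mod_R]$ (a limit of presheaves each sending $\Sigma$ to left exact sequences does so as well), so finite limits in $\Lex_{\Sigma}$ are computed pointwise; the displayed formula then presents $\Lan_Y(Uw\otimes Uv)$ as a pointwise colimit over the filtered category $\ca{I}\times\ca{J}$ of the exact evaluation functors $H\mapsto H(a_i,b_j)$, and filtered colimits commute with finite limits in $\Mod_R$, so $\Lan_Y(Uw\otimes Uv)$, and hence $L$, preserves finite limits.

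For conservativity I would factor $L$ through $\Ind(\ca{B})$. Write $\bar{w}\colon\Ind(\ca{A})\to\Mod_A$ and $\bar{v}\colon\Ind(\ca{B})\to\Mod_B$ for the left Kan extensions of $w$ and $v$ along the inclusions of $\ca{A}$ and $\ca{B}$; by Proposition~\ref{prop:L_comonadic} these are comonadic, hence conservative, and by \cite[Theorem~4.47]{KELLY_BASIC} together with the presentations above one has $U\bar{w}(H)\cong\colim^{i}H(a_i)$ and $U\bar{v}(H')\cong\colim^{j}H'(b_j)$, with $U\bar{w}$, $U\bar{v}$ conservative (a conservative functor followed by the faithful exact forgetful functor) and cocontinuous. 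Setting $T(F)\defl\colim^{i\in\ca{I}}F(a_i,-)\in\Ind(\ca{B})$ — the colimit lies in $\Ind(\ca{B})$ since each $F(a_i,-)$ does and $\Ind(\ca{B})$ is closed under filtered colimits — cocontinuity of $U\bar{v}$ and a Fubini interchange of the two filtered colimits produce a natural isomorphism
\[
 U\bar{v}\bigl(T(F)\bigr)\cong\colim^{i}U\bar{v}\bigl(F(a_i,-)\bigr)\cong\colim^{i}\colim^{j}F(a_i,b_j)\cong\colim^{(i,j)}F(a_i,b_j)\cong L(F),
\]
so $L\cong U\bar{v}\circ T$. Since $U\bar{v}$ is conservative it then suffices to show $T$ reflects isomorphisms, and here I would evaluate: for $F\in\Lex_{\Sigma}$ the presheaf $F(-,B)\colon\ca{A}^{\op}\to\Mod_R$ is left exact — this is precisely the part of the defining condition of $\Lex_{\Sigma}$ coming from $\ca{A}$ — so $F(-,B)\in\Ind(\ca{A})$ and $T(F)(B)\cong\colim^{i}F(a_i,B)\cong U\bar{w}\bigl(F(-,B)\bigr)$. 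If $T(f)$ is an isomorphism then $U\bar{w}(f(-,B))$ is an isomorphism for every $B$, hence $f(-,B)$ is an isomorphism in $\Ind(\ca{A})$ by conservativity of $U\bar{w}$, hence $f$ is a pointwise and therefore an actual isomorphism. The main obstacle is precisely this conservativity half: one must relate the ad hoc construction of $\Lex_{\Sigma}$ back to $\Ind(\ca{A})$ and $\Ind(\ca{B})$ so that Proposition~\ref{prop:L_comonadic} can be applied to the factors, and one must check that the interchange of the two filtered colimits defining $L\cong U\bar{v}\circ T$ is natural in $F$; the exactness half is by contrast a formal consequence of the colimit formula.
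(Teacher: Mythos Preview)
Your proof is correct and follows essentially the same approach as the paper's: both use the filtered colimit formula $L(H)\cong\colim^{(i,j)}H(a_i,b_j)$ from Lemma~\ref{lemma:flat_lan} to deduce exactness from the commutation of filtered colimits with finite limits, and both prove conservativity by a two-step reduction using the conservativity of $U\bar{w}$ on $\Ind(\ca{A})$ and of $U\bar{v}$ on $\Ind(\ca{B})$. Your packaging of the conservativity argument as a factorization $L\cong U\bar{v}\circ T$ is a slightly cleaner organization of the same computation the paper carries out directly with the colimit formula (with the roles of $\ca{A}$ and $\ca{B}$ interchanged).
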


\begin{proof}
 Since $Uw$ and $Uv$ are flat, they can be written as filtered colimits
\[
 Uw \cong \colim^{i\in \ca{I}} \ca{A}(a_i,-) \quad \text{and} \quad
 Uv \cong \colim^{j\in \ca{J}} \ca{B}(b_j,-)
\]
 of representable functors.
 
 Since limits in the reflexive subcategory $\Lex_{\Sigma}$ are computed as in the category of presheaves, exactness of $L$ follows from \eqref{eqn:flat_tensor_lan} and the fact that filtered colimits commute with finite limits in the category of presheaves.

 Since $w$ and $v$ are faithful, the restrictions of the left Kan extensions $\Lan_Y w$ and $\Lan_Y v$ to $\Ind(\ca{A})$ and $\Ind(\ca{B})$ are conservative. It follows that both $\Lan_Y Uw$ and $\Lan_Y Uv$ are conservative when restricted to ind-objects. Now let $\alpha \colon H_0 \rightarrow H_1$ be a morphism in $\Lex_{\Sigma}$ with the property that $L(\alpha)$ is an isomorphism. By Lemma~\ref{lemma:flat_lan} this is equivalent to
\begin{equation}\label{eqn:explicit_iso}
 \colim^{(i,j) \in \ca{I} \times \ca{J}} \alpha_{a_i,b_j} \colon \colim^{(i,j) \in \ca{I} \times \ca{J}} H_0(a_i,b_j) \rightarrow \colim^{(i,j) \in \ca{I} \times \ca{J}} H_1(a_i,b_j)
\end{equation}
 being an isomorphism. 

 By definition of $\Sigma$, $H_k(a,-)$ lies in $\Ind(\ca{B})$ for every $a \in \ca{A}$ and $H_k(-,b)$ lies in $\Ind(\ca{A})$ for every $b \in \ca{B}$. Since $\Ind(\ca{A})=\Lex[\ca{A}^{\op},\Sigma]$ is closed under filtered colimits, it follows that
\[
 \colim^{j \in \ca{J}} H_k(-,b_j) \in \Ind(\ca{A})
\]
 for $k=0,1$. From the natural isomorphism
\begin{align*}
 \Lan_Y Uw \bigl(  \colim^{j \in \ca{J}} H_k(-,b_j) \bigr)
 & \cong \colim^{i \in \ca{I}} \colim^{j \in \ca{J}} H_k(a_i,b_j) \\
 & \cong \colim^{(i,j) \in \ca{I} \times \ca{J}} H_k(c_i,d_j) 
\end{align*}
 (see Lemma~\ref{lemma:flat_lan}) and from~\eqref{eqn:explicit_iso} it follows that $\Lan_Y Uw$ sends the morphism
\[
 \colim^{j\in \ca{J}} \alpha_{-,b_j} \colon  \colim^{j \in \ca{J}} H_0(-,b_j) \rightarrow  \colim^{j \in \ca{J}} H_1(-,b_j)
\]
 in $\Ind(\ca{A})$ to an isomorphism. Thus $\colim^{j\in \ca{J}} \alpha_{-,b_j}$ is an isomorphism in $\Ind(\ca{A})$. Since a natural transformation is an isomorphism if and only if all its components are, it follows that
\[
 \colim^{j\in \ca{J}} \alpha_{a,b_j} \colon  \colim^{j \in \ca{J}} H_0(a,b_j) \rightarrow  \colim^{j \in \ca{J}} H_1(a,b_j)
\]
 is an isomorphism for every $a \in \ca{A}$. Using Lemma~\ref{lemma:flat_lan} again, we deduce that
\[
 \Lan_Y Uv (\alpha_{a,-}) \colon \Lan_Y Uv \bigl(H_0(a,-) \bigr) \rightarrow \Lan_Y Uv \bigl( H_1(a,-) \bigr) 
\]
 is an isomorphism for every $a \in \ca{A}$. Since the restriction of $\Lan_Y Uv$ to $\Ind(\ca{B})$ is conservative, it follows that $\alpha_{a,-}$ is an isomorphism, hence that $\alpha_{a,b}$ is invertible for every $(a,b) \in \ca{A}\otimes \ca{B}$. This concludes the proof that $L$ is conservative.
\end{proof}

\begin{lemma}\label{lemma:tensor_product_indabelian}
 The category $\Lex_{\Sigma}$ is locally finitely presentable and abelian, and $\ca{A}\boxtimes \ca{B}$ coincides with the full subcategory of finitely presentable objects. Moreover, for every object $X \in \ca{A}\boxtimes \ca{B}$ there exists a finite family of objects $(A_i,B_j) \in \ca{A}\otimes \ca{B}$ together with an epimorphism
\[
 \oplus_{i,j} Z(A_i,B_j) \rightarrow X \smash{\rlap{.}}
\]
\end{lemma}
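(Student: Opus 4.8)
The plan is to recognize the class $\Sigma$ of sequences used to build $\ca{A}\boxtimes\ca{B}$ as arising from an ind-class on the finitely cocomplete $R$-linear category $\ca{A}\otimes\ca{B}$, so that Proposition~\ref{prop:fp_in_lex_sheaves} applies verbatim. Concretely, $\Sigma$ consists of the sequences $(A,B)\xrightarrow{p\otimes\id_B}(A',B)\xrightarrow{q\otimes\id_B}(A'',B)\to 0$ coming from right exact sequences in $\ca{A}$, together with the analogous sequences coming from $\ca{B}$. These need \emph{not} be right exact in $\ca{A}\otimes\ca{B}$ for general $R$, so the hypothesis of Proposition~\ref{prop:fp_in_lex_sheaves} is not literally met. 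The first step is therefore to check that $\Lex_\Sigma$ is nevertheless locally finitely presentable and abelian directly: filtered colimits in $\Lex_\Sigma$ agree with those in the presheaf category since finite limits commute with filtered colimits, so the reflections $R(\ca{A}(-,A)\otimes\ca{B}(-,B))$ of the representables form a dense generator of finitely presentable objects, and $\Lex_\Sigma$ is a reflective (indeed lex-reflective, by the sheaf description in Proposition~\ref{prop:sheaf_iff_exact} applied to the images of the representables) subcategory of an abelian category, hence abelian. For the lex-reflectivity one invokes that $\Lex_\Sigma=\Sh_{R(\Sigma)}$ for the coverage generated by $\Sigma$, exactly as in the proof of Proposition~\ref{prop:fp_in_lex_sheaves}(i).

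The second step identifies $\ca{A}\boxtimes\ca{B}$ with $(\Lex_\Sigma)_{\fp}$. By Proposition~\ref{prop:rex_tensor}, $\ca{A}\boxtimes\ca{B}$ is the closure under finite colimits of the reflections $Z(A,B)=R(Y(A,B))$ of the representables inside $\Lex_\Sigma$. Each $Z(A,B)$ is finitely presentable (it is a reflection of a finitely presentable presheaf and filtered colimits are computed as in presheaves), and finitely presentable objects are closed under finite colimits, so $\ca{A}\boxtimes\ca{B}\subseteq(\Lex_\Sigma)_{\fp}$. For the reverse inclusion one repeats the two-step resolution argument of Proposition~\ref{prop:fp_in_lex_sheaves}(ii): given $X\in(\Lex_\Sigma)_{\fp}$, density of the $Z(A,B)$ yields an epimorphism $\oplus_{i,j}Z(A_i,B_j)\to X$; its kernel $K$ in the abelian category $\Lex_\Sigma$ is finitely generated by Lemma~\ref{lemma:finitely_generated} (using that $\Lex_\Sigma$ is locally finitely presentable abelian and $\oplus_{i,j}Z(A_i,B_j)$, $X$ are finitely presentable), hence $K$ receives an epimorphism from a finite sum of $Z(A_k,B_l)$'s, producing the desired right exact presentation and exhibiting $X$ as a finite colimit of objects $Z(A,B)$. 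This simultaneously proves the last sentence of the lemma, since $\oplus_{i,j}Z(A_i,B_j)\to X$ is the required epimorphism with $(A_i,B_j)\in\ca{A}\otimes\ca{B}$.

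The one genuinely delicate point — and the step I expect to be the main obstacle — is Lemma~\ref{lemma:finitely_generated}'s hypothesis that $\oplus_{i,j}Z(A_i,B_j)$ is finitely presentable in $\Lex_\Sigma$: this requires that each $Z(A,B)$ be finitely presentable, which in turn requires knowing that the reflector $R\colon[(\ca{A}\otimes\ca{B})^{\op},\Mod_R]\to\Lex_\Sigma$ preserves finite presentability. This follows once we know filtered colimits in $\Lex_\Sigma$ are created by the inclusion into presheaves (so that $\Lex_\Sigma(Z(A,B),-)\cong[(\ca{A}\otimes\ca{B})^{\op},\Mod_R](Y(A,B),-)|_{\Lex_\Sigma}$ preserves filtered colimits by Yoneda and the commutation of finite limits with filtered colimits), so the crux is really the first step, establishing that finite limits — equivalently, the defining left-exactness conditions cutting out $\Lex_\Sigma$ — interact well with filtered colimits; this is standard but must be stated carefully because $\Sigma$ is not a class of exact sequences. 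Everything else is a transcription of Proposition~\ref{prop:fp_in_lex_sheaves} and Lemma~\ref{lemma:finitely_generated} with $\ca{A}$ replaced by $\ca{A}\otimes\ca{B}$.
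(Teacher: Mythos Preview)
Your argument for the abelianness of $\Lex_\Sigma$ has a genuine gap. You correctly note that the sequences in $\Sigma$ need not be right exact in $\ca{A}\otimes\ca{B}$ for general $R$, so Proposition~\ref{prop:fp_in_lex_sheaves} does not apply. But then you invoke Proposition~\ref{prop:sheaf_iff_exact} and the identity $\Lex_\Sigma=\Sh_{R(\Sigma)}$ anyway, ``exactly as in the proof of Proposition~\ref{prop:fp_in_lex_sheaves}(i).'' That proposition has as its standing hypothesis that $\Sigma$ is an ind-class, which by Definition~\ref{dfn:inductive_class} is in particular a class of \emph{right exact} sequences; both the coverage property of $R(\Sigma)$ (Lemma~\ref{lemma:inductive_coverage}) and the comparison with sheaves (Lemmas~\ref{lemma:exact_implies_sheaf} and~\ref{lemma:yoneda_exact}) use this. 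So the very obstruction you identified blocks the lex-reflectivity argument you then run. Reflectivity of $\Lex_\Sigma$ is indeed available from \cite[Theorem~6.11]{KELLY_BASIC}, but nothing you have written establishes that the reflector is left exact, and hence nothing establishes that $\Lex_\Sigma$ is abelian.

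The paper proves abelianness by an entirely different route that you have overlooked: it uses the ambient hypothesis of \S\ref{section:kelly_tensor} that $\ca{A}$ and $\ca{B}$ are weakly Tannakian, with fiber functors $w$ and $v$. From these one builds the functor $L\colon \Lex_\Sigma\to\Mod_R$ of Lemma~\ref{lemma:L_left_adjoint}, and Lemma~\ref{lemma:L_lex_conservative} shows $L$ is exact and conservative. Since $\Lex_\Sigma$ already has finite limits and colimits (as a reflective subcategory of presheaves), an exact conservative functor to $\Mod_R$ forces the canonical coimage-to-image map of any morphism to be an isomorphism, whence $\Lex_\Sigma$ is abelian. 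Once abelianness is in hand, the remainder of your argument---density of the $Z(A,B)$, their finite presentability via closure under filtered colimits, and the two-step resolution using Lemma~\ref{lemma:finitely_generated}---matches the paper's. The essential point you are missing is that this lemma is not a purely formal statement about $\Lex_\Sigma$ for arbitrary finitely cocomplete $\ca{A}$, $\ca{B}$: its proof genuinely consumes the fiber functors.
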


\begin{proof}
 The reflections of the representable functors in $\Lex_{\Sigma}$ form a strong generator by the Yoneda lemma. Since filtered colimits commute with finite limits, $\Lex_{\Sigma}$ is closed under filtered colimits in the category of all presehaves on $\ca{A}\otimes \ca{B}$. It follows that the reflections of the representable functors are finitely presentable, hence that $\Lex_{\Sigma}$ is a locally finitely presentable category. Since $L$ is exact and conservative (see Lemma~\ref{lemma:L_lex_conservative}), $\Lex_{\Sigma}$ is also abelian. 

 Since the $Z(A,B)$ form a strong generator, for every $X \in \Lex_{\Sigma}$ there exists an epimorphism 
\[
  \oplus_{i,j} Z(A_i,B_j) \rightarrow X
\]
 where $(A_i,B_j)$ is a (possibly infinite) family of objects in $\ca{A}\otimes \ca{B}$. If $X$ is finitely presentable, the restriction to some finite subfamily is still an epimorphism. Since the kernel of this epimorphism is finitely generated (see Lemma~\ref{lemma:finitely_generated}), there exists an epimorphism onto it from some finitely presentable object $X^{\prime}$. By applying the same argument to $X^{\prime}$ we find that $X$ is in the closure of the $Z(A,B)$ under finite colimits. This shows that finitely presentable objects of $\Lex_{\Sigma}$ lie in $\ca{A}\boxtimes \ca{B}$. The converse inclusion follows from the fact that finitely presentable objects are closed under finite colimits.
\end{proof}

\begin{thm}\label{thm:tensor_weakly_tannakian}
 The category $\ca{A} \boxtimes \ca{B}$ is weakly Tannakian, and the composite
\begin{equation}\label{eqn:fiber_functor} 
 \xymatrix{\ca{A} \boxtimes \ca{B} \ar[r]^-{w\boxtimes v} & \Mod_A^{\fp} \boxtimes \Mod_B^{\fp} \ar[r]^-{\simeq} & \Mod_{A\otimes B}^{\fp}}
\end{equation}
 is a fiber functor.
\end{thm}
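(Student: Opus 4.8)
The plan is to assemble the statement from the facts established above, viewing the composite~\eqref{eqn:fiber_functor} as a functor into $\Mod_{A\otimes B}$ via the fully faithful inclusion $\Mod_{A\otimes B}^{\fp}\hookrightarrow\Mod_{A\otimes B}$, which preserves finite colimits. The category $\ca{A}\boxtimes\ca{B}$ is already known to be a right exact symmetric monoidal $R$-linear category (see the beginning of this section), and it is ind-abelian: by Lemma~\ref{lemma:tensor_product_indabelian} the category $\Lex_{\Sigma}$ is locally finitely presentable and abelian with $\ca{A}\boxtimes\ca{B}=(\Lex_{\Sigma})_{\fp}$, so Proposition~\ref{prop:fp_objects_ind_abelian} applies; in particular $\Ind(\ca{A}\boxtimes\ca{B})\simeq\Lex_{\Sigma}$. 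It therefore remains to verify conditions~(i) and~(ii) of Definition~\ref{dfn:weakly_tannakian} for~\eqref{eqn:fiber_functor}.

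For condition~(i) I would show that~\eqref{eqn:fiber_functor} is faithful, flat and right exact. Right exactness is immediate: $w\boxtimes v$ is right exact by construction, the equivalence $\Mod_A^{\fp}\boxtimes\Mod_B^{\fp}\simeq\Mod_{A\otimes B}^{\fp}$ preserves finite colimits, and so does the inclusion into $\Mod_{A\otimes B}$. For the other two properties I would invoke Lemma~\ref{lemma:fiber_functor_lan}, which identifies the composite of~\eqref{eqn:fiber_functor} with the forgetful functor $\Mod_{A\otimes B}\to\Mod_R$ with $L\circ J$. For faithfulness: $J\colon\ca{A}\boxtimes\ca{B}\to\Lex_{\Sigma}$ is the inclusion of a full subcategory, hence fully faithful, and $L$ is exact and conservative by Lemma~\ref{lemma:L_lex_conservative}; an exact conservative $R$-linear functor is faithful (if $Lf=0$ then $L$ exact sends the mono $\ker f\hookrightarrow X$ to an isomorphism, so $\ker f\cong X$ by conservativity and $f=0$). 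Thus $L\circ J$ is faithful, and therefore so is~\eqref{eqn:fiber_functor}. For flatness: under the identification $\Ind(\ca{A}\boxtimes\ca{B})\simeq\Lex_{\Sigma}$ the functor $J$ becomes the canonical inclusion, and since $L$ is cocontinuous by Lemma~\ref{lemma:L_left_adjoint} it is the left Kan extension of $L\circ J$ along $J$ by \cite[Theorem~5.29]{KELLY_BASIC}; this Kan extension is exact, in particular left exact, so Proposition~\ref{prop:flat_for_ind_abelian}---equivalently part~(v) of Proposition~\ref{prop:flat_characterization} with $\ca{C}=\Lex_{\Sigma}$---shows that $L\circ J$, and hence~\eqref{eqn:fiber_functor}, is flat.

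For condition~(ii), fix $X\in\ca{A}\boxtimes\ca{B}$ and use Lemma~\ref{lemma:tensor_product_indabelian} to choose an epimorphism $\bigoplus_{i,j}Z(A_i,B_j)\to X$ with $(A_i,B_j)$ ranging over a finite family of objects of $\ca{A}\otimes\ca{B}$. Since $\ca{A}$ and $\ca{B}$ are weakly Tannakian, there are epimorphisms $A_i'\to A_i$ in $\ca{A}$ and $B_j'\to B_j$ in $\ca{B}$ with $A_i'$ and $B_j'$ dualizable. The monoidal structure on $\ca{A}\otimes\ca{B}$ being componentwise, the object $(A_i',B_j')$ is dualizable there, so $Z(A_i',B_j')$ is dualizable in $\ca{A}\boxtimes\ca{B}$ because $Z$ is strong monoidal (Corollary~\ref{cor:Z_strong_monoidal}). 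As $\ca{A}\boxtimes\ca{B}$ is ind-abelian the chosen epimorphisms are regular, hence cokernels, so the right exact functors $Z(-,B_j')$ and $Z(A_i,-)$ carry them to epimorphisms; composing shows $Z(A_i',B_j')\to Z(A_i,B_j)$ is an epimorphism. Then the finite direct sum $\bigoplus_{i,j}Z(A_i',B_j')$ is dualizable (a finite direct sum of dualizable objects is dualizable) and maps epimorphically onto $X$, establishing~(ii).

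I expect the real content to lie in the inputs, particularly the conservativity of $L$ in Lemma~\ref{lemma:L_lex_conservative}; within the present argument the only delicate point is the identification of the left Kan extension of $L\circ J$ along $\ca{A}\boxtimes\ca{B}\hookrightarrow\Ind(\ca{A}\boxtimes\ca{B})$ with $L$, which rests on $L$ being cocontinuous (Lemma~\ref{lemma:L_left_adjoint}) together with $\Ind(\ca{A}\boxtimes\ca{B})\simeq\Lex_{\Sigma}$ from Lemma~\ref{lemma:tensor_product_indabelian}.
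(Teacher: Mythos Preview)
Your proof is correct and follows essentially the same route as the paper's: ind-abelianness from Lemma~\ref{lemma:tensor_product_indabelian}, faithfulness and flatness of the fiber functor via the identification with $LJ$ from Lemma~\ref{lemma:fiber_functor_lan} together with the exactness and conservativity of $L$ from Lemma~\ref{lemma:L_lex_conservative}, and enough duals via the epimorphism from $\bigoplus Z(A_i,B_j)$ and strong monoidality of $Z$. One small slip: when you write ``As $\ca{A}\boxtimes\ca{B}$ is ind-abelian the chosen epimorphisms are regular,'' the epimorphisms $A_i'\to A_i$ and $B_j'\to B_j$ live in $\ca{A}$ and $\ca{B}$, so it is the ind-abelianness of $\ca{A}$ and $\ca{B}$ (part of the hypothesis that they are weakly Tannakian) that makes them regular; the paper phrases this equivalently by saying that $Z$ sends sequences in $\Sigma$ to right exact sequences.
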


\begin{proof}
 The category $\ca{A}\boxtimes \ca{B}$ is ind-abelian by Lemma~\ref{lemma:tensor_product_indabelian}. Thus it suffices to show that the composite~\eqref{eqn:fiber_functor} is a fiber functor and that every object in $\ca{A}\boxtimes \ca{B}$ admits an epimorphism from an object with a dual.

 We first show that the composite~\eqref{eqn:fiber_functor} is a fiber functor. We have already observed that it is strong symmetric monoidal, so it only remains to check that it is faithful and flat. By Lemma~\ref{lemma:fiber_functor_lan} it suffices to check that $LJ$ is faithful and flat. Since $L \colon \Ind(\ca{A}\boxtimes \ca{B}) \rightarrow \Mod_R$ is a conservative and exact left adjoint (see Lemma~\ref{lemma:L_lex_conservative}), this follows from the characterization of flat functors on ind-abelian categories in Proposition~\ref{prop:flat_for_ind_abelian}.

 It remains to show that every object $X \in \ca{A}\boxtimes \ca{B}$ admits an epimorphism from an object with a dual. Since we already know that there is an epimorphism
\[
 \oplus_{i,j} Z(A_i,B_j) \rightarrow X
\]
 for some finite family of objects $(A_i,B_j) \in \ca{A}\otimes \ca{B}$ (see Lemma~\ref{lemma:tensor_product_indabelian}), it suffices to check this for $X=Z(A,B)$. By Corollary~\ref{cor:Z_strong_monoidal}, $Z$ is strong monoidal, so it preserves objects with duals. Therefore it suffices to give a morphism 
\[
(A^{\prime},B^{\prime}) \rightarrow (A,B) 
\]
 in $\ca{A}\otimes \ca{B}$ whose domain has a dual and whose image under $Z$ is an epimorphism.

 To do this, choose epimorphisms $p\colon A^{\prime} \rightarrow A$ and $q \colon B^{\prime} \rightarrow B$ such that $A^{\prime}$ and $B^{\prime}$ have a dual. Then $(A^{\prime},B^{\prime}) \cong (A^{\prime},I)\otimes (I,B^{\prime})$ has a dual, and $Z$ sends the morphism
\[
 \xymatrix{(A^{\prime},B^{\prime}) \ar[r]^-{f\otimes \id_{B^{\prime}}} & (A,B^{\prime}) \ar[r]^-{\id_{A} \otimes q} & (A,B)}
\]
 to an epimorphism because, by construction, $Z$ sends sequences in $\Sigma$ to right exact sequences.
\end{proof}

 \subsection{Right exact tensor products of categories of quasi-coherent sheaves}
 We are now ready to prove that the category of finitely presentable quasi-coherent sheaves on the product of two Adams stacks is given by the right exact tensor product of the categories of finitely presentable quasi-coherent sheaves on the two factors.

\begin{proof}[Proof of Theorem~\ref{thm:stack_product_to_coproduct}]
 We follow the strategy outlined in \S \ref{section:deligne_tensor_intro}. Let $X$ and $Y$ be Adams stacks. The contravariant pseudofunctor
\[
 \QCoh_{\fp}(-) \colon \ca{AS}^{\op} \rightarrow \ca{RM}
\]
 from Adams stacks to right exact symmetric monoidal categories is an embedding by \cite[Theorem~1.3.3]{SCHAEPPI_STACKS}. Since Adams stacks are closed under products (see Theorem~\ref{thm:product_adams}), we know that $X\times Y$ is an Adams stack. Thus $\QCoh_{\fp}(X\times Y)$ is a bicategorical coproduct in the image of $\QCoh_{\fp}(-)$.

 From Theorem~\ref{thm:rex_tensor_coproduct} we know that $\QCoh_{\fp}(X)\boxtimes \QCoh_{\fp}(Y)$ is a bicategorical coproduct in $\ca{RM}$. Thus, if we can show that it lies in the image of $\QCoh_{\fp}(-)$, it must be equivalent to $\QCoh_{\fp}(X\times Y)$.

 In Theorem~\ref{thm:tensor_weakly_tannakian} we proved that weakly Tannakian ind-abelian categories are closed under the right exact tensor product. The conclusion therefore follows from the Tannakian recognition theorem (see Theorem~\ref{thm:recognition}).
\end{proof}

\begin{rmk}
 The above proof of Theorem~\ref{thm:stack_product_to_coproduct} shows that the contravariant pseudofunctor
\[
 \QCoh_{\fp}(-) \colon \ca{AS}^{\op} \rightarrow \ca{RM}
\]
 from Adams stacks to right exact symmetric monoidal categories sends finite products to finite coproducts.
\end{rmk}

 The fact that the category of coherent sheaves on the product of two noetherian schemes with the resolution property is given by the Deligne tensor product of the categories of coherent sheaves of the two factors follows immediately from Theorem~\ref{thm:stack_product_to_coproduct}.

\begin{proof}[Proof of Theorem~\ref{thm:scheme_product_to_coproduct}]
 Let $X$ and $Y$ be noetherian schemes with the resolution property, and assume that $X \times Y$ is noetherian. From Theorem~\ref{thm:stack_product_to_coproduct}, we know that there is an equivalence
\[
 \QCoh_{\fp}(X\times Y) \simeq \QCoh_{\fp}(X) \boxtimes \QCoh_{\fp}(Y)
\]
 of symmetric monoidal $R$-linear categories, where $\boxtimes$ denotes the right exact tensor product. The assumption that $X \times Y$ is noetherian implies that $\QCoh_{\fp}(X\times Y)=\Coh(X\times Y)$ is abelian. Therefore the right exact tensor product satisfies the defining universal property of the Deligne tensor product.
\end{proof}

\appendix
\section{Proof of Theorem~\ref{thm:pseudomonoid_coproduct}} \label{section:pseudomonoid_coproduct}
 Theorem~\ref{thm:pseudomonoid_coproduct} is a categorification of the fact that the tensor product of commutative algebras is a coproduct in the category of commutative algebras, and it is proved in essentially the same way. Fix a pseudomonoid $(A,p,i)$ in $\ca{M}$. We will show that the functor
\begin{equation}\label{eqn:combine}
 \SymPsMon(A_0,A) \times  \SymPsMon(A_1,A) \rightarrow \SymPsMon(A_0 \otimes A_1,A) 
\end{equation}
 which sends $(f,g)$ to $p \cdot f\otimes g$ and the functor
\begin{equation}\label{eqn:split}
\SymPsMon(A_0 \otimes A_1,A) \rightarrow \SymPsMon(A_0,A) \times  \SymPsMon(A_1,A) 
\end{equation}
 which sends $h$ to $(h \cdot A_0 \otimes i_1, h\cdot i_0 \otimes A_1)$ are well-defined and mutually inverse equivalences. To see that they are well-defined, we have to show that $p \colon A\otimes A \rightarrow A$, $A_0\otimes i_1$, and $i_0 \otimes A_1$ are symmetric strong monoidal.

 By the coherence theorem for monoidal bicategories \cite{GORDON_POWER_STREET}, it suffices to prove this for symmetric monoidal Gray monoids in the sense of \cite{DAY_STREET}. In order to do that we use the string diagram notation for Gray monoids introduced in \cite[\S 11.1]{SCHAEPPI}.

 The first observation we use is that
\[
 \otimes \colon \ca{M} \times \ca{M} \rightarrow \ca{M}
\]
 is a symmetric monoidal pseudofunctor, that is, a sylleptic monoidal pseudofunctor between symmetric monoidal 2-categories (see \cite[Definition~15]{DAY_STREET}). Giving a symmetric pseudomonoid in $\ca{M}$ amounts to giving a symmetric monoidal pseudofunctor
\[
 \ast \rightarrow \ca{M} \smash{\rlap{.}}
\]
 Thus $\otimes$ lifts to a pseudofunctor between the respective 2-categories of symmetric pseudomonoids (see \cite[Proposition~16]{DAY_STREET}). Unraveling the definitions (and using the notation from \cite{DAY_STREET}), the product on $A\otimes A$ is given by the composite
\[
 \xymatrix{(A\otimes A) \otimes A\otimes A \ar[rr]^-{A \otimes \rho_{A,A} \otimes A} && (A\otimes A)\otimes (A\otimes A) \ar[r]^-{p \otimes p} & A\otimes A} \smash{\rlap{,}}
\]
 and the symmetry is given by
\[
\begin{tikzpicture}[y=0.80pt,x=0.80pt,yscale=-1, inner sep=0pt, outer sep=0pt, every text node part/.style={font=\scriptsize} ]
\path[draw=black,line join=miter,line cap=butt,line width=0.500pt]
  (135.0000,622.3622) .. controls (135.0000,632.3622) and (135.0000,637.3622) ..
  (135.0000,647.3622);
\path[draw=black,line join=miter,line cap=butt,line width=0.500pt]
  (175.0000,622.3622) .. controls (175.0000,632.3622) and (175.0000,637.3622) ..
  (175.0000,647.3622);
\path[draw=black,line join=miter,line cap=butt,line width=0.500pt]
  (175.0000,647.3622) .. controls (205.0000,662.3622) and (215.1517,672.6149) ..
  (230.0000,697.3622) .. controls (244.8483,722.1094) and (250.0000,767.3622) ..
  (250.0000,797.3622);
\path[draw=black,line join=miter,line cap=butt,line width=0.500pt]
  (90.0000,622.3622) .. controls (90.0000,647.3622) and (90.0000,682.3622) ..
  (100.0000,712.3622);
  \path[draw=black,line join=miter,line cap=butt,line width=0.500pt]
    (135.0000,647.3622) .. controls (140.0000,667.3622) and (172.5000,679.2372) ..
    (195.0000,702.3622) .. controls (217.5000,725.4872) and (220.0000,767.3622) ..
    (220.0000,797.3622);
  \path[draw=black,line join=miter,line cap=butt,line width=0.500pt]
    (175.0000,647.3622) .. controls (173.5445,654.6397) and (167.0046,661.4936) ..
    (158.3404,668.0470)(150.6157,673.4921) .. controls (142.2852,679.0281) and
    (132.9848,684.3649) .. (124.6350,689.5825)(115.9614,695.2548) .. controls
    (107.6455,701.0330) and (101.4162,706.6976) .. (100.0000,712.3622);
  \path[draw=black,line join=miter,line cap=butt,line width=0.500pt]
    (131.5057,651.9388) .. controls (111.0262,681.3167) and (121.0012,718.3633) ..
    (140.0000,737.3622);
\path[draw=black,line join=miter,line cap=butt,line width=0.500pt]
  (100.0000,712.3622) .. controls (84.5535,729.0052) and (94.8244,760.1781) ..
  (110.0000,767.3622);
\path[draw=black,line join=miter,line cap=butt,line width=0.500pt]
  (140.0000,737.3622) .. controls (130.0000,747.3622) and (130.0000,762.3622) ..
  (115.0000,767.3622);
\path[draw=black,line join=miter,line cap=butt,line width=0.500pt]
  (160.0000,712.3622) .. controls (155.0000,712.3622) and (145.0000,737.3622) ..
  (140.0000,737.3622);
\path[draw=black,line join=miter,line cap=butt,line width=0.500pt]
  (160.0000,712.3622) .. controls (170.0000,717.3622) and (173.7500,724.8622) ..
  (180.0000,738.6122) .. controls (186.2500,752.3622) and (190.0000,777.3622) ..
  (190.0000,797.3622);
\path[draw=black,line join=miter,line cap=butt,line width=0.500pt]
  (110.0000,767.3622) .. controls (110.0000,777.3622) and (110.0000,787.3622) ..
  (110.0000,797.3622);
\path[fill=black] (99.500023,712.95093) node[circle, draw, line width=0.500pt,
  minimum width=5mm, fill=white, inner sep=0.25mm] (text3983) {$A R^{-1}$    };
\path[fill=black] (110.6117,768.00421) node[circle, draw, line width=0.500pt,
  minimum width=5mm, fill=white, inner sep=0.25mm] (text3987) {$S^{-1}$    };
\path[fill=black] (138.89597,742.24536) node[circle, draw, line width=0.500pt,
  minimum width=5mm, fill=white, inner sep=0.25mm] (text3991) {$R^{-1}A$    };
\path[fill=black] (158.98985,708.82666) node[circle, draw, line width=0.500pt,
  minimum width=5mm, fill=white, inner sep=0.25mm] (text3995) {$A \nu A$    };
\path[fill=black] (134.85536,647.79608) node[circle, draw, line width=0.500pt,
  minimum width=5mm, fill=white, inner sep=0.25mm] (text3999) {$\gamma A^2$
  };
\path[fill=black] (174.75639,647.29102) node[circle, draw, line width=0.500pt,
  minimum width=5mm, fill=white, inner sep=0.25mm] (text4003) {$A \gamma$    };
\path[fill=black] (80.054588,617.23895) node[above right] (text11288) {$A \rho
  A$     };
\path[fill=black] (131.57237,617.23895) node[above right] (text11292) {$p A^2$
  };
\path[fill=black] (171.22086,617.23895) node[above right] (text11296) {$A p$
  };
\path[fill=black] (105.81348,810.93573) node[above right] (text11300) {$\rho$
  };
\path[fill=black] (179.80714,810.68323) node[above right] (text11304) {$A \rho
  A$     };
\path[fill=black] (214.15234,810.43066) node[above right] (text11308) {$p A^2$
  };
\path[fill=black] (245.7196,810.43066) node[above right] (text11312) {$A p$   };

\end{tikzpicture}
\]
 where $R$ and $S$ denote the modifications which replace the hexagon axioms in a braided monoidal 2-category. The multiplication of a commutative abelian group is a group homomorphism, so it does not seem unreasonable to expect that the product $p \colon A\otimes A \rightarrow A$ and the unit $i \colon I \rightarrow A$ are strong symmetric monoidal morphisms, and that the coherence 2-cells are symmetric monoidal natural transformations. The first fact would for example follow if we could show that a pseudofunctor $T \colon \ca{M} \rightarrow \ca{N}$ is sylleptic monoidal if and only if
\[
 \xymatrix{\ca{M} \times \ca{M} \dtwocell\omit{^<-6> \chi} \ar[r]^-{\otimes} \ar[d]_{T\times T} & \ca{M} \ar[d]^{T} \\ 
\ca{N} \times \ca{N} \ar[r]_-{\otimes} & \ca{N}}
\]
 is braided monoidal. Unfortunately, on \cite[p.~125]{DAY_STREET}, it is only proved that $\chi$ is monoidal, not that it is braided. Instead of proving this result in full generality we outline how to check that it holds in the particular case we are interested in. This is greatly simplified by Lack's coherence theorem \cite[Theorem~3.5 and Remark~3.6]{LACK_PSEUDOMONADS} for (non-symmetric) pseudomonoids, which states that any two 2-cells built from associators and unit isomorphisms coincide if they have the same source and target.

\begin{lemma}\label{lemma:product_symmetric_monoidal}
 Let $(A,p,i)$ be a symmetric pseudomonoid. Then the 2-cells
\[
\vcenter{\hbox{
\begin{tikzpicture}[y=0.80pt,x=0.80pt,yscale=-1, inner sep=0pt, outer sep=0pt, every text node part/.style={font=\scriptsize} ]
\path[draw=black,line join=miter,line cap=butt,line width=0.500pt]
  (435.0000,607.3622) .. controls (435.0000,617.3622) and (425.0000,632.3622) ..
  (415.0000,642.3622);
\path[draw=black,line join=miter,line cap=butt,line width=0.500pt]
  (409.2857,647.3622) .. controls (402.7402,656.0091) and (396.6294,662.5800) ..
  (390.0001,669.5052);
\path[draw=black,line join=miter,line cap=butt,line width=0.500pt]
  (415.7143,649.5050) .. controls (430.0000,677.3622) and (425.0000,722.3622) ..
  (415.0000,747.3622);
\path[draw=black,line join=miter,line cap=butt,line width=0.500pt]
  (385.0000,672.3622) .. controls (375.0000,677.3622) and (375.0000,687.3622) ..
  (365.0000,692.3622);
\path[draw=black,line join=miter,line cap=butt,line width=0.500pt]
  (365.0000,697.3622) .. controls (375.0000,702.3622) and (375.0000,712.3622) ..
  (385.0000,717.3622);
\path[draw=black,line join=miter,line cap=butt,line width=0.500pt]
  (390.0000,722.3622) .. controls (395.0000,732.3622) and (405.0000,737.3622) ..
  (410.0000,747.3622);
\path[draw=black,line join=miter,line cap=butt,line width=0.500pt]
  (415.0000,752.3622) .. controls (425.0000,762.3622) and (435.0000,777.3622) ..
  (435.0000,787.3622);
\path[draw=black,line join=miter,line cap=butt,line width=0.500pt]
  (390.0000,672.3622) .. controls (400.0000,692.3622) and (400.0000,702.3622) ..
  (390.0000,717.3622);
\path[draw=black,line join=miter,line cap=butt,line width=0.500pt]
  (410.0000,752.3622) .. controls (410.0000,762.3622) and (375.0000,772.3622) ..
  (375.0000,787.3622)(385.0000,722.3622) .. controls (370.0000,737.3622) and
  (377.6164,755.5816) .. (385.0000,762.3622) .. controls (386.0353,763.3129) and
  (387.1734,764.2919) .. (388.3653,765.3002)(395.4652,771.4280) .. controls
  (400.5937,776.1646) and (405.0000,781.4524) .. (405.0000,787.3622);
\path[draw=black,line join=miter,line cap=butt,line width=0.500pt]
  (410.0000,642.3622) .. controls (408.0475,636.5046) and (403.0450,632.9344) ..
  (397.3746,629.5674)(389.4943,624.9326) .. controls (383.0781,620.9310) and
  (377.1647,616.0210) .. (375.0000,607.3622)(385.0000,667.3622) .. controls
  (379.4230,657.2404) and (377.1164,647.8788) .. (385.0000,637.3622) .. controls
  (392.8836,626.8456) and (400.0000,622.3622) .. (405.0000,607.3622);
\path[draw=black,line join=miter,line cap=butt,line width=0.500pt]
  (360.0000,697.3622) .. controls (350.0000,707.3622) and (345.0000,717.3622) ..
  (345.0000,737.3622) .. controls (345.0000,757.3622) and (345.0000,772.3622) ..
  (345.0000,787.3622);
\path[fill=black] (412.64731,644.76563) node[circle, draw, line width=0.500pt,
  minimum width=5mm, fill=white, inner sep=0.25mm] (text13323) {$\alpha$    };
\path[fill=black] (387.39349,670.01947) node[circle, draw, line width=0.500pt,
  minimum width=5mm, fill=white, inner sep=0.25mm] (text13327) {$A \alpha^{-1}$
  };
\path[fill=black] (387.39349,720.02197) node[circle, draw, line width=0.500pt,
  minimum width=5mm, fill=white, inner sep=0.25mm] (text13331) {$A \alpha$    };
\path[fill=black] (412.64731,750.3266) node[circle, draw, line width=0.500pt,
  minimum width=5mm, fill=white, inner sep=0.25mm] (text13335) {$\alpha^{-1}$
  };
\path[fill=black] (363.14984,695.27325) node[circle, draw, line width=0.500pt,
  minimum width=5mm, fill=white, inner sep=0.25mm] (text13339) {$A \gamma A$
  };
\path[fill=black] (432.09274,800.5816) node[above right] (text13343) {$p$     };
\path[fill=black] (402.04071,800.83417) node[above right] (text13347) {$A p$
  };
\path[fill=black] (369.46329,800.83423) node[above right] (text13351) {$p A^2$
  };
\path[fill=black] (335.62317,800.58167) node[above right] (text13355) {$A \rho
  A$     };
\path[fill=black] (368.20059,602.8443) node[above right] (text13359) {$p A^2$
  };
\path[fill=black] (402.54578,602.8443) node[above right] (text13363) {$A p$
  };
\path[fill=black] (432.85037,602.33923) node[above right] (text13367) {$p$   };

\end{tikzpicture}
}}
\quad \text{and} \quad
\vcenter{\hbox{
\begin{tikzpicture}[y=0.80pt,x=0.80pt,yscale=-1, inner sep=0pt, outer sep=0pt, every text node part/.style={font=\scriptsize} ]
\path[draw=black,line join=miter,line cap=butt,line width=0.500pt]
  (495.0000,802.3622) .. controls (495.0000,787.3622) and (520.0000,762.3622) ..
  (525.0000,737.3622);
\path[draw=black,line join=miter,line cap=butt,line width=0.500pt]
  (525.0000,802.3622) .. controls (525.0000,792.3622) and (530.0000,782.3622) ..
  (540.0000,772.3622);
\path[draw=black,line join=miter,line cap=butt,line width=0.500pt]
  (540.0000,772.3622) .. controls (550.0000,782.3622) and (555.0000,792.3622) ..
  (555.0000,802.3622);
\path[fill=black] (539.42145,772.54993) node[circle, draw, line width=0.500pt,
  minimum width=5mm, fill=white, inner sep=0.25mm] (text14122) {$\rho^{-1}$
  };
\path[fill=black] (522.75397,734.16412) node[above right] (text14126) {$i$
  };
\path[fill=black] (491.94427,815.98651) node[above right] (text14130) {$i$
  };
\path[fill=black] (522.75397,815.98651) node[above right] (text14134) {$A i$
  };
\path[fill=black] (553.05853,817.98651) node[above right] (text14138) {$p$
  };

\end{tikzpicture}
}}
\]
 endow the morphism $p \colon A \otimes A \rightarrow A$ with the structure of a symmetric monoidal morphism.
\end{lemma}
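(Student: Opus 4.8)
The plan is to verify, one by one, the coherence axioms of a symmetric strong monoidal morphism $(A\otimes A, p_{A\otimes A}, i_{A\otimes A}) \to (A,p,i)$ for the 1-cell $p$ equipped with the two displayed 2-cells. By the coherence theorem for symmetric monoidal bicategories \cite{GORDON_POWER_STREET} we may assume that $\ca{M}$ is a symmetric monoidal Gray monoid and argue with the string-diagram calculus of \cite[\S 11.1]{SCHAEPPI}, as in the discussion above. Both displayed 2-cells are pasted entirely from the associator $\alpha$, the unit isomorphisms, and the symmetry $\gamma$ of the pseudomonoid $A$, all of which are invertible; hence both are invertible, and it only remains to check the axioms, after which $p$ is automatically a \emph{strong} monoidal morphism. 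Recall, moreover, that the pseudomonoid structure of $A\otimes A$ is the image of that of $A$ under the symmetric monoidal pseudofunctor $\otimes$, so that its associator and unit isomorphisms are obtained from $\alpha$ and the unit isomorphisms of $A$ by $\otimes$-whiskering, together with structural 2-cells coming from the (sylleptic) monoidal structure of $\otimes$ and from the braiding $\rho_{A,A}$ of $\ca{M}$.

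For the axioms not involving the symmetries --- the multiplicativity (``pentagon'') axiom relating the first displayed 2-cell to the associators of $A$ and of $A\otimes A$, and the two unit axioms relating the two displayed 2-cells to the unit isomorphisms --- I would proceed as follows. After expanding every occurrence of the associator and unit isomorphisms of $A\otimes A$ into the corresponding data for $A$, both sides of each axiom become a single pasting built from instances of $\alpha^{\pm 1}$, the unit isomorphisms, $\gamma^{\pm 1}$, and structural 2-cells of $\otimes$ and of the braiding of $\ca{M}$. These structural 2-cells, as well as the instances of $\gamma$, occur symmetrically on the two sides; using the pseudonaturality of $\gamma$ and the interchange law of the Gray monoid one brings them into matching positions, so that after cancelling a common invertible 2-cell one is reduced to an equality of 2-cells built \emph{only} from the associator and the unit isomorphisms of the single pseudomonoid $A$, with the same source and target. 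Lack's coherence theorem for pseudomonoids \cite[Theorem~3.5 and Remark~3.6]{LACK_PSEUDOMONADS} then forces the two sides to agree.

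The remaining axiom --- compatibility with the symmetries --- is where the real work lies, since there Lack's theorem is of no help. Here $\gamma$ enters essentially and cannot be cancelled, and the symmetry of $A\otimes A$ is the elaborate string diagram displayed above, built not only from $\gamma$ but also from the modifications $R$ and $S$ replacing the two hexagon axioms of $\ca{M}$ and from the syllepsis $\nu$ of $\ca{M}$. The plan is to substitute this defining diagram into both sides of the symmetry axiom for $p$ and then to reduce both sides to a common normal form, using the defining equations of $R$ and $S$, the pseudonaturality of $\gamma$, the symmetry axioms of the pseudomonoid $A$ (in particular that its double symmetry is the identity), and Lack's coherence theorem once more for the associator and unit parts that remain. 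I expect this to be a lengthy but unsurprising string-diagram computation: it is the precise categorification of the elementary fact that the multiplication of a commutative algebra is an algebra homomorphism, and it is the main obstacle of the proof.
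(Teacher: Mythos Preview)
Your overall plan is not wrong, but it misses two shortcuts that make the paper's argument considerably shorter than you anticipate.

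For the non-symmetric monoidal axioms the paper does no computation at all: it simply invokes \cite[p.~125]{DAY_STREET}, where it is shown that for any braided monoidal pseudofunctor the structural transformation $\chi$ is a monoidal transformation. Since $\otimes \colon \ca{M}\times\ca{M}\to\ca{M}$ is sylleptic monoidal and the pseudomonoid structure on $A\otimes A$ is precisely the image of that of $A$ under $\otimes$, the displayed $2$-cell is an instance of $\chi$, and the pentagon and unit axioms for $p$ come for free. Your direct verification via cancellation of the $\gamma$'s and Lack's theorem is plausible, but it reproves a special case of a result the paper is happy to quote.

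For the symmetry axiom, the paper's argument is again much shorter than the ``lengthy but unsurprising'' computation you describe, and it does \emph{not} proceed by reducing both sides to a normal form involving $R$, $S$ and $\nu$ separately. The key is the single string-diagram equation that \emph{defines} a symmetric pseudomonoid (the one relating $\gamma$, $\alpha$, $R^{-1}$ and the pseudonaturality of the ambient braiding $\rho$). The paper applies that equation exactly twice; this absorbs the contributions of the symmetry of $A\otimes A$ and leaves a $2$-cell built only from associators, at which point Lack's coherence theorem finishes. In other words, the modifications $R$, $S$ and the syllepsis $\nu$ appearing in the symmetry of $A\otimes A$ are not handled one by one: they are packaged into the defining axiom of a symmetric pseudomonoid, and two invocations of that axiom plus Lack suffice. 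Your approach would likely arrive at the same place, but the essential observation you are missing is that the symmetric-pseudomonoid axiom, applied twice, \emph{is} the computation.
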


\begin{proof}
 The fact that the 2-cells in question endow $p$ with the structure of a (not necessarily symmetric) monoidal morphism is a consequence of the description of braided monoidal pseudofunctors given in \cite[p.~125]{DAY_STREET}. It only remains to check that $p$ is symmetric. This follows by first applying the defining equation
\[
 \vcenter{\hbox{
 \begin{tikzpicture}[y=0.80pt,x=0.80pt,yscale=-1, inner sep=0pt, outer sep=0pt, every text node part/.style={font=\scriptsize} ]
\path[draw=black,line join=miter,line cap=butt,line width=0.500pt]
  (105.0000,872.3622) .. controls (105.0000,882.3622) and (105.0000,887.3622) ..
  (105.0000,897.3622);
\path[draw=black,line join=miter,line cap=butt,line width=0.500pt]
  (135.0000,872.3622) .. controls (134.5382,890.8735) and (130.0000,907.3622) ..
  (120.0000,922.3622);
\path[draw=black,line join=miter,line cap=butt,line width=0.500pt]
  (105.0000,897.3622) .. controls (115.0000,902.3622) and (110.0000,917.3622) ..
  (120.0000,922.3622);
\path[draw=black,line join=miter,line cap=butt,line width=0.500pt]
  (105.0000,897.3622) .. controls (85.0000,917.3622) and (85.0000,957.3622) ..
  (105.0000,977.3622);
\path[draw=black,line join=miter,line cap=butt,line width=0.500pt]
  (120.0000,922.3622) .. controls (119.8089,931.9635) and (120.1725,942.7611) ..
  (120.0000,952.3622);
\path[draw=black,line join=miter,line cap=butt,line width=0.500pt]
  (120.0000,952.3622) .. controls (112.6438,958.4805) and (113.8741,972.9993) ..
  (105.0000,977.3622);
\path[draw=black,line join=miter,line cap=butt,line width=0.500pt]
  (105.0000,977.3622) .. controls (105.0000,982.3622) and (105.0000,992.3621) ..
  (105.0000,1002.3622);
\path[draw=black,line join=miter,line cap=butt,line width=0.500pt]
  (120.0000,952.3622) .. controls (131.8304,966.2441) and (130.0530,986.0044) ..
  (130.0000,1002.3622);
\path[draw=black,line join=miter,line cap=butt,line width=0.500pt]
  (120.0000,922.3622) .. controls (155.0000,942.3622) and (155.1581,971.4127) ..
  (155.0000,1002.3622);
\path[fill=black] (105.05586,897.30377) node[circle, draw, line width=0.500pt,
  minimum width=5mm, fill=white, inner sep=0.25mm] (text17177) {$\gamma A$    };
\path[fill=black] (105.05586,977.10583) node[circle, draw, line width=0.500pt,
  minimum width=5mm, fill=white, inner sep=0.25mm] (text17181) {$R^{-1}$    };
\path[fill=black] (119.70308,952.86218) node[circle, draw, line width=0.500pt,
  minimum width=5mm, fill=white, inner sep=0.25mm] (text17185) {$A \gamma$    };
\path[fill=black] (119.70308,922.05249) node[circle, draw, line width=0.500pt,
  minimum width=5mm, fill=white, inner sep=0.25mm] (text17189) {$\alpha$    };
\path[fill=black] (100.0051,1015.9967) node[above right] (text17493) {$\rho$
  };
\path[fill=black] (99.673332,866.99921) node[above right] (text17517) {$p A$
  };
\path[fill=black] (132.83505,866.99921) node[above right] (text17521) {$p$
  };
\path[fill=black] (124.69257,1015.8487) node[above right] (text17525) {$A p$
  };
\path[fill=black] (153.03812,1015.9967) node[above right] (text17529) {$p$   };

\end{tikzpicture}
 }}
 \quad = \quad
 \vcenter{\hbox{
 \begin{tikzpicture}[y=0.80pt,x=0.80pt,yscale=-1, inner sep=0pt, outer sep=0pt, every text node part/.style={font=\scriptsize} ]
\path[draw=black,line join=miter,line cap=butt,line width=0.500pt]
  (250.0000,872.3622) .. controls (251.5765,887.9470) and (264.6578,886.5904) ..
  (265.0000,897.3622);
\path[draw=black,line join=miter,line cap=butt,line width=0.500pt]
  (280.0000,872.3622) .. controls (279.6794,893.7527) and (267.0947,881.7834) ..
  (265.0000,897.3622);
\path[draw=black,line join=miter,line cap=butt,line width=0.500pt]
  (265.0000,897.3622) .. controls (274.2516,906.8148) and (273.7962,915.0515) ..
  (275.0000,927.3622);
\path[draw=black,line join=miter,line cap=butt,line width=0.500pt]
  (275.0000,927.3622) .. controls (269.7678,933.2358) and (270.3490,948.6431) ..
  (255.0000,957.3622) .. controls (239.3369,966.2597) and (220.0421,981.9979) ..
  (220.0000,1002.3622)(265.0000,897.3622) .. controls (245.4573,907.1335) and
  (243.0941,932.2562) .. (249.2160,953.7869)(253.4467,965.2844) .. controls
  (256.5823,972.1147) and (260.5571,978.0768) .. (265.0000,982.3622);
\path[draw=black,line join=miter,line cap=butt,line width=0.500pt]
  (275.0000,927.3622) .. controls (284.7115,948.3462) and (290.0000,967.3622) ..
  (265.0000,982.3622);
\path[draw=black,line join=miter,line cap=butt,line width=0.500pt]
  (265.0000,982.3622) .. controls (258.1546,989.0290) and (251.4129,991.3235) ..
  (250.0000,1002.3622);
\path[draw=black,line join=miter,line cap=butt,line width=0.500pt]
  (265.0000,982.3622) .. controls (275.0000,987.3622) and (278.7813,992.0164) ..
  (280.0000,1002.3622);
\path[fill=black] (264.65997,894.27332) node[circle, draw, line width=0.500pt,
  minimum width=5mm, fill=white, inner sep=0.25mm] (text14689) {$\alpha$    };
\path[fill=black] (274.76151,927.10327) node[circle, draw, line width=0.500pt,
  minimum width=5mm, fill=white, inner sep=0.25mm] (text14693) {$\gamma$    };
\path[fill=black] (265.16504,982.15656) node[circle, draw, line width=0.500pt,
  minimum width=5mm, fill=white, inner sep=0.25mm] (text14697) {$\alpha$    };
\path[shift={(508.405,-385.23238)},draw=black,miter limit=4.00,draw
  opacity=0.294,line width=1.276pt]
  (-248.0315,1344.6851)arc(0.000:180.000:8.858)arc(-180.000:0.000:8.858) --
  cycle;
\path[fill=black] (216.17264,1015.9967) node[above right] (text17497) {$\rho$
  };
\path[fill=black] (244.96199,1015.9967) node[above right] (text17501) {$Ap$
  };
\path[fill=black] (278.29703,1015.9967) node[above right] (text17505) {$p$
  };
\path[fill=black] (276.7818,865.98901) node[above right] (text17509) {$p$     };
\path[fill=black] (247.48737,866.49408) node[above right] (text17513) {$p A$
  };

\end{tikzpicture}
 }}
\]
 of a symmetric pseudomonoid twice, and then using Lack's coherence theorem. Here the gray circle denotes the pseudonaturality isomorphism of $\rho$.
\end{proof}

\begin{lemma}\label{lemma:unit_symmetric_monoidal}
 Let $(A,p,i)$ be a symmetric pseudomonoid. Then the unique 2-cell
\[
 p \cdot i\times i \cong i
\]
 which exists by Lack's coherence theorem and the identity on $i$ endow $i \colon I \rightarrow A$ with the structure of a symmetric monoidal morphism. Moreover, the natural transformations $\lambda \colon p\cdot i\otimes A \rightarrow \id_A$ and $\rho \colon p \cdot A \otimes i \rightarrow \id$ are monoidal.
\end{lemma}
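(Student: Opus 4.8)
The plan is to imitate the proof of Lemma~\ref{lemma:product_symmetric_monoidal}: reduce to a symmetric monoidal Gray monoid by the coherence theorem \cite{GORDON_POWER_STREET}, argue with string diagrams, and replace every explicit computation by an appeal to Lack's coherence theorem \cite{LACK_PSEUDOMONADS}. Regarding $i \colon I \rightarrow A$ as a morphism out of the trivial pseudomonoid $I$, its candidate structure 2-cells are the invertible $i^2 \colon p\cdot(i\otimes i) \Rightarrow i$ furnished (and pinned down) by Lack's coherence theorem, together with $i^0 = \id_i$ — the latter being forced since the unit of $A$ is the 1-cell $i$ itself, so that the required cell $i_A \Rightarrow i$ is an identity. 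The two axioms for $(i,i^2,i^0)$ to be a (strong) monoidal morphism are then equalities of pasting composites whose common source and target 1-cells one reads off after a short unwinding, in which every 2-cell is built from associators, unit isomorphisms and the cells $i^2$, $i^0$; since any two such 2-cells with the same source and target coincide (Lack's coherence theorem), both sides agree, and the unit axioms are immediate because $i^0$ is an identity.

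Next I would verify the braiding-compatibility axiom that completes the definition of a symmetric monoidal morphism. This is the only place the braiding $\gamma$ of $A$ enters: the axiom relates $i^2$ to $\gamma$ whiskered by $i\otimes i$ and to the trivial braiding of $I$. By the unit-compatibility axioms of a braided pseudomonoid — the categorification of the fact that the braiding of the unit object with anything is determined by the unit constraints (cf.\ \cite{DAY_STREET}, \cite{MCCRUDDEN_BALANCED}) — the cell $\gamma$ whiskered by a copy of $i$ already reduces to a coherence 2-cell; substituting this, the axiom becomes an identity of 2-cells built from associators and unit isomorphisms, which Lack's coherence theorem settles. Equivalently, one can observe that $I$ is the initial symmetric pseudomonoid, so that $i$ carries an essentially unique symmetric monoidal structure, necessarily the one just described.

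Finally I would show that $\lambda \colon p\cdot(i\otimes A)\Rightarrow \id_A$ and $\rho \colon p\cdot(A\otimes i)\Rightarrow \id_A$ are monoidal natural transformations, where $p\cdot(i\otimes A)$ is equipped with the monoidal-morphism structure assembled from $p^2$, $p^0$ (Lemma~\ref{lemma:product_symmetric_monoidal}), $i^2$, $i^0$, and $\id_A$ carries the trivial one. Each of $\lambda$, $\rho$ must satisfy two 2-cell identities; source and target 1-cells match as before, and the only wrinkle is that the composite structure cell of $p\cdot(i\otimes A)$ involves $p^2$, which by Lemma~\ref{lemma:product_symmetric_monoidal} contains $\gamma$ (in the guise of the factor $A\gamma A$). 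In that composite this occurrence of $\gamma$ is whiskered by a copy of $i$ coming from the factor $i\otimes A$, so it too reduces to a coherence 2-cell, and the identities follow from Lack's coherence theorem. Since a monoidal natural transformation between symmetric monoidal morphisms is automatically compatible with the symmetries, $\lambda$ and $\rho$ are symmetric monoidal as well.

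The step I expect to be the main obstacle is purely organizational: in each of these axioms one must confirm that the source and target 1-cells of the two sides genuinely agree, so that Lack's coherence theorem applies, and one must track every occurrence of the braiding $\gamma$ to check that it is whiskered by a copy of $i$ and hence can be traded for a coherence 2-cell. This is routine but tedious, and uses no idea beyond those already present in the proof of Lemma~\ref{lemma:product_symmetric_monoidal}.
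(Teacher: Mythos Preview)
Your approach is essentially the same as the paper's: Lack's coherence handles the monoidal axioms outright, and for the symmetry axiom and the monoidality of $\lambda$, $\rho$ you invoke the unit-compatibility axiom of a braided pseudomonoid to trade $\gamma$ whiskered by $i$ for a coherence 2-cell. The paper does exactly this, citing the specific equation on \cite[p.~121]{DAY_STREET} (which expresses $\gamma$ whiskered by the unit as $\rho^{-1}\cdot\lambda$) both for the symmetry of $i$ and for the monoidality of $\lambda$ and $\rho$; your write-up is more explicit about tracking where the braiding appears, but the substance is identical.
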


\begin{proof}
 The morphism in question is monoidal by Lack's coherence theorem. To see that it is symmetric, it suffices to apply the equation
\[
\vcenter{\hbox{
\begin{tikzpicture}[y=0.80pt,x=0.80pt,yscale=-1, inner sep=0pt, outer sep=0pt, every text node part/.style={font=\scriptsize} ]
\path[draw=black,line join=miter,line cap=butt,line width=0.500pt]
  (395.0000,887.3622) .. controls (395.0000,897.3622) and (395.0000,902.3622) ..
  (390.0000,912.3622);
\path[draw=black,line join=miter,line cap=butt,line width=0.500pt]
  (390.4240,911.4459) .. controls (375.5903,918.9175) and (360.0000,927.3622) ..
  (355.0000,942.3622)(365.0000,887.3622) .. controls (365.0000,893.6870) and
  (363.1852,908.8646) .. (367.1456,917.9948)(374.5874,927.4204) .. controls
  (378.5601,932.4181) and (380.0000,936.6702) .. (380.0000,942.3622);
\path[draw=black,line join=miter,line cap=butt,line width=0.500pt]
  (390.0000,912.3622) .. controls (400.0000,917.3622) and (405.6378,936.9342) ..
  (405.0000,942.3622);
\path[shift={(627.9612,-421.67889)},draw=black,miter limit=4.00,draw
  opacity=0.294,line width=1.276pt]
  (-248.0315,1344.6851)arc(0.000:180.000:8.858)arc(-180.000:0.000:8.858) --
  cycle;
\path[fill=black] (392.85715,911.29077) node[circle, draw, line width=0.500pt,
  minimum width=5mm, fill=white, inner sep=0.25mm] (text19145) {$A\gamma$    };
\path[fill=black] (392.85715,883.07648) node[above right] (text19169) {$p$
  };
\path[fill=black] (402.14285,955.93365) node[above right] (text19173) {$p$
  };
\path[fill=black] (375.35715,954.86218) node[above right] (text19177) {$A i$
  };
\path[fill=black] (361.78571,882.00513) node[above right] (text19181) {$i A$
  };
\path[fill=black] (344.64285,956.29077) node[above right] (text19189)
  {$\rho_{I,I}$   };

\end{tikzpicture}
}}
\quad = \quad
\vcenter{\hbox{
\begin{tikzpicture}[y=0.80pt,x=0.80pt,yscale=-1, inner sep=0pt, outer sep=0pt, every text node part/.style={font=\scriptsize} ]
\path[draw=black,line join=miter,line cap=butt,line width=0.500pt]
  (425.0000,872.3622) .. controls (430.0000,882.3622) and (435.0000,882.3622) ..
  (440.0000,892.3622);
\path[draw=black,line join=miter,line cap=butt,line width=0.500pt]
  (440.0000,892.3622) .. controls (445.0000,882.3622) and (450.0000,882.3622) ..
  (455.0000,872.3622);
\path[draw=black,line join=miter,line cap=butt,line width=0.500pt]
  (440.0000,922.3622) .. controls (435.0000,932.3622) and (430.0000,932.3622) ..
  (425.0000,942.3622);
\path[draw=black,line join=miter,line cap=butt,line width=0.500pt]
  (440.0000,922.3622) .. controls (445.0000,932.3622) and (450.0000,932.3622) ..
  (455.0000,942.3622);
\path[fill=black] (440,892.36218) node[circle, draw, line width=0.500pt, minimum
  width=5mm, fill=white, inner sep=0.25mm] (text19149) {$\lambda$    };
\path[fill=black] (439.64285,923.79077) node[circle, draw, line width=0.500pt,
  minimum width=5mm, fill=white, inner sep=0.25mm] (text19153) {$\rho^{-1}$
  };
\path[fill=black] (418.71426,955.57648) node[above right] (text19157) {$Ai$
  };
\path[fill=black] (453.92856,957.21936) node[above right] (text19161) {$p$
  };
\path[fill=black] (452.85715,867.36218) node[above right] (text19165) {$p$
  };
\path[fill=black] (421.78571,865.71936) node[above right] (text19185) {$i A$
  };

\end{tikzpicture}
}}
\]
 from \cite[p.~121]{DAY_STREET}. This equation can also be used to show that $\lambda$ and $\rho$ are monoidal.
\end{proof}

\begin{proof}[Proof of Theorem~\ref{thm:pseudomonoid_coproduct}]
 The functor~\eqref{eqn:combine} is well-defined by Lemma~\ref{lemma:product_symmetric_monoidal}, and the functor~\eqref{eqn:split} is well-defined by Lemma~\ref{lemma:unit_symmetric_monoidal}. To check that \eqref{eqn:combine} and \eqref{eqn:split} are mutually inverse equivalences, we need to construct natural symmetric monoidal isomorphisms between the respective composites and the identity functors. Since a symmetric monoidal 2-cell is simply a monoidal 2-cell between symmetric monoidal 1-cells, it suffices to construct a monoidal isomorphism. 

 The composite of the two canonical 2-cells
\[
 p \cdot h\otimes h \cdot A_0 \otimes i_1 \otimes i_0 \otimes A_1 \cong h \cdot (p_0 \cdot A_0 \otimes i_0) \otimes (p_1 \cdot i_1 \otimes A_1) \cong h
\]
 is monoidal. Indeed, the first of these 2-cells is monoidal since
\[
 \varphi_2 \colon h \colon (p_0\otimes p_1) \cdot A_0 \otimes \rho_{A_1,A_0} \otimes A_1 \Rightarrow p \cdot h \otimes h
\]
 is monoidal (this is a general fact about braided monoidal 1-cells, see \cite[p.~126]{DAY_STREET}). The second 2-cell above is monoidal by Lemma~\ref{lemma:unit_symmetric_monoidal}.
 
 It remains to check that the invertible 2-cells
\[
 p \cdot f\otimes g \cdot (i_0\otimes A_1) \cong p \cdot i \otimes A \cdot g \cong g
\]
 and 
\[
 p \cdot f\otimes g \cdot (A_0 \otimes i_1) \cong p \cdot A \otimes i \cdot f \cong f
\]
 are monoidal. By symmetry, it suffices to consider the first of these, which by Lemma~\ref{lemma:unit_symmetric_monoidal} amounts to showing that $\varphi_0 \colon fi_0 \cong i$ is symmetric monoidal. This is immediate from the definition of the monoidal structure on $i$ and $i_0$ (see Lemma~\ref{lemma:unit_symmetric_monoidal}).
\end{proof}

\bibliographystyle{amsalpha}
\bibliography{indabelian}

\end{document}